\theoremstyle{plain} 
\newtheorem{thm}{Theorem}[section]
\newtheorem{prop}[thm]{Proposition}
\newtheorem{lem}[thm]{Lemma}
\newtheorem{cor}[thm]{Corollary}
\theoremstyle{definition}
\newtheorem{defn}[thm]{Definition}
\newtheorem{rem}[thm]{Remark}
\newtheorem{ex}[thm]{Example}
\numberwithin{equation}{section}
\renewcommand{\theta}{\vartheta}
\renewcommand{\phi}{\varphi}
\renewcommand{\epsilon}{\varepsilon}
\renewcommand{\subset}{\subseteq}
\renewcommand{\supset}{\supseteq}
\newcommand{\N}{\mathbb N}
\newcommand{\Z}{\mathbb Z}
\newcommand{\CC}{\mathcal C}
\newcommand{\DD}{\mathcal D}
\newcommand{\OOO}{\mathcal O}
\newcommand{\HHH}{\mathcal H}
\newcommand{\BBB}{\mathcal B}
\newcommand{\SSS}{\mathcal S}
\newcounter{PartitionDepth}
\newcounter{PartitionLength}
\newcommand{\parti}[2]{
 \begin{picture}(#2,#1)
 \setcounter{PartitionDepth}{-1-#1}
 \put(#2,\thePartitionDepth){\line(0,1){#1}}
 \end{picture}}
\newcommand{\partii}[3]{
 \begin{picture}(#3,#1)
 \setcounter{PartitionLength}{#3-#2}
 \setcounter{PartitionDepth}{-1-#1}
 \put(#2,\thePartitionDepth){\line(0,1){#1}}     
 \put(#3,\thePartitionDepth){\line(0,1){#1}}
 \put(#2,\thePartitionDepth){\line(1,0){\thePartitionLength}}
 \end{picture}}
\newcommand{\partiii}[4]{
 \begin{picture}(#4,#1)
 \setcounter{PartitionLength}{#4-#2}
 \setcounter{PartitionDepth}{-1-#1}
 \put(#2,\thePartitionDepth){\line(0,1){#1}}
 \put(#3,\thePartitionDepth){\line(0,1){#1}}
 \put(#4,\thePartitionDepth){\line(0,1){#1}}
 \put(#2,\thePartitionDepth){\line(1,0){\thePartitionLength}} 
 \end{picture}}
\newcommand{\upparti}[2]{
 \begin{picture}(#2,#1)
 \setcounter{PartitionDepth}{#1}
 \put(#2,0){\line(0,1){#1}}
 \end{picture}}
\newcommand{\uppartii}[3]{
 \begin{picture}(#3,#1)
 \setcounter{PartitionLength}{#3-#2}
 \setcounter{PartitionDepth}{#1}
 \put(#2,0){\line(0,1){#1}}     
 \put(#3,0){\line(0,1){#1}}
 \put(#2,\thePartitionDepth){\line(1,0){\thePartitionLength}}
 \end{picture}}
\newcommand{\uppartiii}[4]{
 \begin{picture}(#4,#1)
 \setcounter{PartitionLength}{#4-#2}
 \setcounter{PartitionDepth}{#1}
 \put(#2,0){\line(0,1){#1}}
 \put(#3,0){\line(0,1){#1}}
 \put(#4,0){\line(0,1){#1}}
 \put(#2,\thePartitionDepth){\line(1,0){\thePartitionLength}} 
 \end{picture}}
\newcommand{\uppartiv}[5]{
 \begin{picture}(#5,#1)
 \setcounter{PartitionLength}{#5-#2}
 \setcounter{PartitionDepth}{#1}
 \put(#2,0){\line(0,1){#1}}
 \put(#3,0){\line(0,1){#1}}
 \put(#4,0){\line(0,1){#1}}
 \put(#5,0){\line(0,1){#1}}
 \put(#2,\thePartitionDepth){\line(1,0){\thePartitionLength}} 
 \end{picture}}
\newsavebox{\boxpaarpart}
\newsavebox{\boxbaarpart}
\newsavebox{\boxdreipart}
\newsavebox{\boxvierpart}
\newsavebox{\boxvierpartrot}
\newsavebox{\boxvierpartrotdrei}
\newsavebox{\boxcrosspart}
\newsavebox{\boxhalflibpart}
\newsavebox{\boxpositioner} 
\newsavebox{\boxfatcross} 
\newsavebox{\boxprimarypart} 
\newcommand{\paarpart}{\usebox{\boxpaarpart}}
\newcommand{\vierpart}{\usebox{\boxvierpart}}
\newcommand{\legpart}{\usebox{\boxpositioner}}
\newcommand{\idpart}{|}
\newcommand{\singleton}{\uparrow}
\newcommand{\downsubset}{\begin{turn}{270}$\subset$\end{turn}}
\newcommand{\twocol}{\circ\bullet}
\newsavebox{\boxidpartww}
\newsavebox{\boxidpartbw}
\newsavebox{\boxidpartwb}
\newsavebox{\boxidpartbb}
\newsavebox{\boxidpartsingletonbb}
\newsavebox{\boxidpartsingletonww}
\newsavebox{\boxidpartsingletonbw}
\newsavebox{\boxidpartsingletonwb}
\newsavebox{\boxpaarpartbb}
\newsavebox{\boxpaarpartww}
\newsavebox{\boxpaarpartbw}
\newsavebox{\boxpaarpartwb}
\newsavebox{\boxbaarpartbb}
\newsavebox{\boxbaarpartww}
\newsavebox{\boxbaarpartbw}
\newsavebox{\boxbaarpartwb}
\newsavebox{\boxcutpaarpartbb}
\newsavebox{\boxcutpaarpartww}
\newsavebox{\boxcutpaarpartbw}
\newsavebox{\boxcutpaarpartwb}
\newsavebox{\boxsingletonw}
\newsavebox{\boxsingletonb}
\newsavebox{\boxdownsingletonw}
\newsavebox{\boxdownsingletonb}
\newsavebox{\boxvierpartwbwb}
\newsavebox{\boxvierpartbwbw}
\newsavebox{\boxvierpartwwbb}
\newsavebox{\boxvierpartwbbw}
\newsavebox{\boxvierpartbwwb}
\newsavebox{\boxvierpartrotwbwb}
\newsavebox{\boxvierpartrotbwwb}
\newsavebox{\boxvierpartrotwbbw}
\newsavebox{\boxvierpartrotbwbw}
\newsavebox{\boxvierpartrotwwww}
\newsavebox{\boxvierpartrotbbbb}
\newsavebox{\boxvierpartwrotwwb}
\newsavebox{\boxvierpartbrotbbw}
\newsavebox{\boxvierpartbrotwbb}
\newsavebox{\boxvierpartwrotbww}
\newsavebox{\boxnestpaarpartbwbb}
\newsavebox{\boxnestpaarpartwbbb}
\newsavebox{\boxdreipartwww}
\newsavebox{\boxsechspartwbwbwb}
\newsavebox{\boxcrosspartwbbw}
\newsavebox{\boxcrosspartbwwb}
\newsavebox{\boxcrosspartwwww}
\newsavebox{\boxcrosspartbbbb}
\newsavebox{\boxhalflibpartwwwwww}
\newsavebox{\boxpositionerd} 
\newsavebox{\boxpositionerrpluseins} 
\newsavebox{\boxpositionerrminuseins} 
\newsavebox{\boxpositionertpluseins} 
\newsavebox{\boxpositionerspluseins} 
\newsavebox{\boxpositionert} 
\newsavebox{\boxpositionerdt} 
\newsavebox{\boxpositioners} 
\newsavebox{\boxpositionerdinv} 
\newsavebox{\boxpositionersinv} 
\newsavebox{\boxpositionerdpluszwei}
\newsavebox{\boxpositionersminuszwei}
\newsavebox{\boxpositionerrnull}
\newsavebox{\boxpositionerwbwb}
\newsavebox{\boxpositionerwwbb}
\newsavebox{\boxpositionerrevwbwb}
\newsavebox{\boxpositionersalphaw} 
\newsavebox{\boxpositionersalphab} 
\newsavebox{\boxAspace}
\newsavebox{\boxBspace}
\newcommand{\idpartww}{\usebox{\boxidpartww}}
\newcommand{\idpartwb}{\usebox{\boxidpartwb}}
\newcommand{\idpartbw}{\usebox{\boxidpartbw}}
\newcommand{\idpartbb}{\usebox{\boxidpartbb}}
\newcommand{\idpartsingletonww}{\usebox{\boxidpartsingletonww}}
\newcommand{\idpartsingletonbb}{\usebox{\boxidpartsingletonbb}}
\newcommand{\idpartsingletonbw}{\usebox{\boxidpartsingletonbw}}
\newcommand{\idpartsingletonwb}{\usebox{\boxidpartsingletonwb}}
\newcommand{\paarpartww}{\usebox{\boxpaarpartww}}
\newcommand{\paarpartbw}{\usebox{\boxpaarpartbw}}
\newcommand{\paarpartwb}{\usebox{\boxpaarpartwb}}
\newcommand{\paarpartbb}{\usebox{\boxpaarpartbb}}
\newcommand{\baarpartww}{\usebox{\boxbaarpartww}}
\newcommand{\baarpartbw}{\usebox{\boxbaarpartbw}}
\newcommand{\baarpartwb}{\usebox{\boxbaarpartwb}}
\newcommand{\baarpartbb}{\usebox{\boxbaarpartbb}}
\newcommand{\cutpaarpartww}{\usebox{\boxcutpaarpartww}}
\newcommand{\cutpaarpartbw}{\usebox{\boxcutpaarpartbw}}
\newcommand{\cutpaarpartwb}{\usebox{\boxcutpaarpartwb}}
\newcommand{\cutpaarpartbb}{\usebox{\boxcutpaarpartbb}}
\newcommand{\nestpaarpartbwbb}{\usebox{\boxnestpaarpartbwbb}}
\newcommand{\nestpaarpartwbbb}{\usebox{\boxnestpaarpartwbbb}}
\newcommand{\singletonw}{\usebox{\boxsingletonw}}
\newcommand{\singletonb}{\usebox{\boxsingletonb}}
\newcommand{\downsingletonw}{\usebox{\boxdownsingletonw}}
\newcommand{\downsingletonb}{\usebox{\boxdownsingletonb}}
\newcommand{\vierpartwbwb}{\usebox{\boxvierpartwbwb}}
\newcommand{\vierpartwwbb}{\usebox{\boxvierpartwwbb}}
\newcommand{\vierpartrotwbwb}{\usebox{\boxvierpartrotwbwb}}
\newcommand{\vierpartrotbwwb}{\usebox{\boxvierpartrotbwwb}}
\newcommand{\vierpartrotwbbw}{\usebox{\boxvierpartrotwbbw}}
\newcommand{\vierpartrotbwbw}{\usebox{\boxvierpartrotbwbw}}
\newcommand{\vierpartrotwwww}{\usebox{\boxvierpartrotwwww}}
\newcommand{\vierpartrotbbbb}{\usebox{\boxvierpartrotbbbb}}
\newcommand{\vierpartwrotwwb}{\usebox{\boxvierpartwrotwwb}}
\newcommand{\vierpartwrotbww}{\usebox{\boxvierpartwrotbww}}
\newcommand{\vierpartbrotwbb}{\usebox{\boxvierpartbrotwbb}}
\newcommand{\vierpartbrotbbw}{\usebox{\boxvierpartbrotbbw}}
\newcommand{\dreipartwww}{\usebox{\boxdreipartwww}}
\newcommand{\sechspartwbwbwb}{\usebox{\boxsechspartwbwbwb}}
\newcommand{\crosspartwbbw}{\usebox{\boxcrosspartwbbw}}
\newcommand{\crosspartbwwb}{\usebox{\boxcrosspartbwwb}}
\newcommand{\crosspartwwww}{\usebox{\boxcrosspartwwww}}
\newcommand{\crosspartbbbb}{\usebox{\boxcrosspartbbbb}}
\newcommand{\halflibpartwwwwww}{\usebox{\boxhalflibpartwwwwww}}
\newcommand{\positionerd}{\usebox{\boxpositionerd}}
\newcommand{\positionerrpluseins}{\usebox{\boxpositionerrpluseins}}
\newcommand{\positionerspluseins}{\usebox{\boxpositionerspluseins}}
\newcommand{\positionertpluseins}{\usebox{\boxpositionertpluseins}}
\newcommand{\positioners}{\usebox{\boxpositioners}}
\newcommand{\positionert}{\usebox{\boxpositionert}}
\newcommand{\positionersinv}{\usebox{\boxpositionersinv}}
\newcommand{\positionersminuszwei}{\usebox{\boxpositionersminuszwei}}
\newcommand{\positionerrminuseins}{\usebox{\boxpositionerrminuseins}}
\newcommand{\positionerwbwb}{\usebox{\boxpositionerwbwb}}
\newcommand{\positionerwwbb}{\usebox{\boxpositionerwwbb}}
\newcommand{\positioner}{\usebox{\boxpositioner}}
\newcommand{\Aspace}{\usebox{\boxAspace}}
\newcommand{\Bspace}{\usebox{\boxBspace}}
\newcommand{\subsetdown}{\begin{rotate}{270}$\subset$\end{rotate}}
\newcommand{\categ}[3]{{#1}_{#2}(#3)}
\newcommand{\categg}[2]{{#1}_{#2}}
\newcommand{\glob}{\textnormal{glob}}
\newcommand{\loc}{\textnormal{loc}}
\newcommand{\grp}{\textnormal{grp}}
\begin{document}
\title[Categories of two-colored non-crossing partitions]{The classification of tensor categories of two-colored noncrossing partitions}
\author{Pierre Tarrago and Moritz Weber}
\address{Saarland University, Fachbereich Mathematik, Postfach 151150,
66041 Saarbr\"ucken, Germany}
\email{tarrago@math.uni-sb.de}
\email{weber@math.uni-sb.de}
\date{\today}
\subjclass[2010]{05A18 (Primary); 20G42, 05E10 (Secondary)}
\keywords{tensor category, noncrossing partitions, easy quantum groups}

\begin{abstract}
Our basic objects are partitions of finite sets of points into disjoint subsets. We investigate  sets of partitions which are closed under taking tensor products, composition and involution, and which contain certain base partitions. These so called categories of partitions are exactly the tensor categories  being used in the theory of Banica and Speicher's orthogonal easy quantum groups. In our approach, we additionally allow a coloring of the points. This serves as the basis for the introduction of unitary easy quantum groups, which is done in a separate article.
The present article however is purely combinatorial. We find all categories of two-colored noncrossing partitions. For doing so, we extract certain parameters with values in the natural numbers specifying the colorization of the categories on a global as well as on a local level. It turns out that there are ten series of categories, each indexed by one or two parameters from the natural numbers, plus two additional categories. This is just the beginning of the classification of categories of two-colored partitions and we point out open problems at the end of the article.
\end{abstract}

\maketitle
\section*{Introduction}

Set theoretical partitions are basic combinatorial objects which appear in many branches of mathematics, see for instance Stanley's book \cite{Stanley} or free probability theory \cite{nicaspeicher}. A partition is a decomposition of a finite set of $k+l$ points into disjoint subsets (the blocks), for $k,l\in\N_0$. In our case, additionally each point is colored either white or black. We represent such a partition pictorially by connecting $k$ upper points with $l$ lower points using strings according to the block pattern. If these strings may be drawn in such a way that they do not cross, we call the partition noncrossing. Here is an example of a noncrossing partition as well as  of a not noncrossing one.
\setlength{\unitlength}{0.5cm}
\begin{center}
\begin{picture}(12,4)
\put(-1,4.35){\partii{1}{1}{2}}
\put(-1,4.35){\partii{1}{3}{4}}
\put(-1,0.35){\uppartii{1}{1}{2}}
\put(-1,0.35){\upparti{2}{3}}
\put(0.05,0){$\circ$}
\put(1.05,0){$\circ$}
\put(2.05,0){$\bullet$}
\put(0.05,3.3){$\bullet$}
\put(1.05,3.3){$\circ$}
\put(2.05,3.3){$\bullet$}
\put(3.05,3.3){$\circ$}
\put(8.3,3.35){\line(1,-3){1}}
\put(7,4.35){\partii{1}{3}{4}}
\put(8.3,0.35){\line(1,3){1}}
\put(7,0.35){\upparti{1}{3}}
\put(8.05,0){$\bullet$}
\put(9.05,0){$\circ$}
\put(10.05,0){$\circ$}
\put(8.05,3.3){$\circ$}
\put(9.05,3.3){$\bullet$}
\put(10.05,3.3){$\bullet$}
\put(11.05,3.3){$\circ$}
\end{picture}
\end{center}

 Given two such partitions, we may form the tensor product (placing them side by side), the composition (placing one above the other), and the involution (reflecting a partition at the horizontal axis). If a set of partitions is closed under these operations and if it contains certain base partitions, it is called a category of partitions. (See Section \ref{SectPartitions}.)

Before speaking about the main results of this article, let us briefly mention our main application of categories of partitions.
In 1987, Woronowicz introduced compact (matrix) quantum groups \cite{woronowicz1987compact}. These are operator algebraic objects generalizing the notion of compact groups and they are most suitable to describe symmetries arising in the noncommutative framework of operator algebras.
By a Tannaka-Krein type result of Woronowicz \cite{woronowicz1988tannaka} compact matrix quantum groups are completely determined by their intertwiner spaces.
In 2009, Banica and Speicher observed that one can define the above mentioned operations on partitions and that they translate one-to-one to natural operations on the intertwiner spaces via some functor. Thus, any category of partitions gives rise to a compact matrix quantum group by modelling its intertwiner space. This lead them to  the definition of orthogonal easy quantum groups \cite{banica2009liberation}, a quite combinatorial class of quantum groups. One of the nice features of these easy quantum groups is, that many operator algebraic or quantum algebraic properties may be seen already in the underlying combinatorics of partitions, see for instance \cite{freslonweber}, \cite{raum2013easy}. 
 Using our approach involving colors, we may define easy quantum groups also in the unitary case. This is done in a separate article \cite{tarragoweberopalg}.

In the present article however, we focus on the purely combinatorial aspects of this theory. Our guiding question is: Which examples of categories of (two-colored) partitions are there? Our main result is the full classification of all categories of two-colored \emph{noncrossing} partitions -- jump to Section \ref{SectMainResult} to read a short summary. Speaking about the technical details of how to tackle the classification, we first divide the categories into four cases, depending on the size of the blocks; more precisely: whether the partitions $\singletonw\otimes\singletonb$ and/or $\vierpartwbwb$ are in the category or not (see Section \ref{SectPartitions} for a definition of these partitions). We then subdivide them again, since categories containing the partition $\paarpartww\otimes\paarpartbb$ behave very differently from those not containing it. In the first case, the globally colorized one,  ``the colors matter only on a global level''. We thus  only have to determine all possible values $c(p)$ of differences between the black points and the white points of any partition $p$ (when being rotated to one line) of a fixed category.
In the second case, the locally colorized one, we also need to take into account certain local parameters: We have to study all possible values $c(p_1)$ of subpartitions $p_1$ sitting between two legs of a partition. We obtain two local parameters depending on whether the points of these two legs are colored by the same color or not. The general strategy for the classification is described in Section \ref{SectStrategy}, and it is applied in each of the four cases (Sections \ref{SectCaseO} to \ref{SectCaseB}). 

The classification is then summarized in Section \ref{SectMainResult}. 
While there are only seven categories of one-colored noncrossing partitions \cite{weber2013classification}, we obtain ten series of categories in the two-colored case, each indexed by one or two parameters from the natural numbers, plus two additional categories. In this sense, the world of unitary easy quantum groups is way richer than the one of orthogonal easy quantum groups. Note that all categories of one-colored partitions (including possibly crossing ones) have recently been found in \cite{raum2013full}. In the two-colored case however, little is known and the present article constitutes only the beginning of a longer investigation. At the end of this article, we also cover the case of categories containing the crossing partition $\crosspartwwww$, see Section \ref{SectGroupCase}. This is the analog of the group case in Woronowicz's theory. Finally, we comment on further aspects and applications of our work in Section \ref{SectConclRem}.

\section*{Acknowledgements}

We thank Teo Banica, Stephen Curran and Roland Speicher for sending us an unpublished draft \cite{speicherunpublished} of their work on the definition and classification of unitary easy quantum groups. Some parts of this article may be found in their draft, too.

The first author was supported by the Universit\'e Franco-Allemande. Both authors were partially funded by the ERC Advanced Grant on Non-Commutative Distributions in Free Probability, held by Roland Speicher.

\section{Categories of two-colored partitions}\label{SectPartitions}

The basics on non-colored partitions presented in this section are well-known to experts in (orthogonal) easy quantum groups. The main ideas may be found in the initial paper \cite{banica2009liberation} on easy quantum groups. However, we need to formulate it for partitions involving a coloring of the points. Attempts in this direction may be found in \cite{freslonweber}, \cite{freslon2014fusion}, \cite{freslon2014partition}, \cite{lemeuxfusion} or earlier in \cite{banicacollinsintcqg} and \cite{banica2008note}.

\subsection{Partitions}

A \emph{(two-colored) partition} is a set theoretical decomposition of $k+l$ points ($k$ of which are ``upper'' and $l$ of which are ``lower'') into disjoint subsets, the \emph{blocks}.
In addition, the points may either be white ($\circ$) or black ($\bullet$). We say that these colors are \emph{inverse} to each other.
The set of all such partitions is denoted by $P^{\twocol}(k,l)$, for $k,l\in\N_0$, and the collection of all $P^{\twocol}(k,l)$ is denoted by $P^{\twocol}$. 
By $\N_0$ we denote the natural numbers $\N_0=\{0,1,2,\ldots\}$.
We usually draw these partitions by placing the $k$ upper points on an upper line and the $l$ lower points on a lower line. We then connect the points by strings according to their decomposition into blocks.
Here are two examples of partitions in $P^{\twocol}(4,3)$. In the second example, the lower right point forms a block of its own and the other blocks are all of size two.

\setlength{\unitlength}{0.5cm}
\begin{center}
\begin{picture}(12,4)
\put(-1,4.35){\partii{1}{1}{2}}
\put(-1,4.35){\partii{1}{3}{4}}
\put(-1,0.35){\uppartii{1}{1}{2}}
\put(-1,0.35){\upparti{2}{3}}
\put(0.05,0){$\circ$}
\put(1.05,0){$\circ$}
\put(2.05,0){$\bullet$}
\put(0.05,3.3){$\bullet$}
\put(1.05,3.3){$\circ$}
\put(2.05,3.3){$\bullet$}
\put(3.05,3.3){$\circ$}
\put(8.3,3.35){\line(1,-3){1}}
\put(7,4.35){\partii{1}{3}{4}}
\put(8.3,0.35){\line(1,3){1}}
\put(7,0.35){\upparti{1}{3}}
\put(8.05,0){$\bullet$}
\put(9.05,0){$\circ$}
\put(10.05,0){$\circ$}
\put(8.05,3.3){$\circ$}
\put(9.05,3.3){$\bullet$}
\put(10.05,3.3){$\bullet$}
\put(11.05,3.3){$\circ$}
\end{picture}
\end{center}

If the connecting strings of a partition $p\in P^{\twocol}(k,l)$ can be drawn in such a way that they do not cross, the partition is called \emph{noncrossing}, and we denote by $NC^{\twocol}(k,l)$ the set of all noncrossing partitions, and $NC^{\twocol}$ for the collection of all $NC^{\twocol}(k,l)$. In the above examples, the first partition is in $NC^{\twocol}$ whereas the second is not.

In the sequel, the following examples of partitions will play a special role.
\begin{itemize}
\item Each of the \emph{identity partitions} $\idpartww,\idpartbb\in P^{\twocol}(1,1)$ connects one upper point with one lower point of the same color. Note that $\idpartwb$ and $\idpartbw$ are \emph{not} identity partitions.
\item The \emph{bicolored pair partitions} $\paarpartbw, \paarpartwb\in P^{\twocol}(0,2)$ connect two lower points of different colors. We also have their horizontally reflected versions $\baarpartbw,\baarpartwb\in P^{\twocol}(2,0)$.
The \emph{unicolored pair partitions} are $\paarpartbb, \paarpartww\in P^{\twocol}(0,2)$\linebreak and $\baarpartbb,\baarpartww\in P^{\twocol}(2,0)$.
\item The \emph{singleton partitions} $\singletonw,\singletonb\in P^{\twocol}(0,1)$ consist of a single lower point respectively. Their reflected versions are denoted by $\downsingletonw, \downsingletonb\in P^{\twocol}(1,0)$.
\item We also have \emph{four block partitions} like $\vierpartwbwb,\vierpartwwbb\in P^{\twocol}(0,4)$ and $\vierpartrotwbwb, \vierpartrotbwwb\in P^{\twocol}(2,2)$. 
\item All preceding examples are partitions consisting of a single block. The \emph{crossing partition} $\crosspartwbbw\in P^{\twocol}(2,2)$ however consists of two blocks. It connects a white upper left point to a white lower right point, as well as a black upper right point to a black lower left point; we also have other colorings like $\crosspartbwwb$ or $\crosspartwwww$. These partitions  are not in $NC^{\twocol}(2,2)$.
\end{itemize}

It is often convenient to associate words to partitions $p\in P^{\twocol}(0,l)$ having no upper points in the sense that each block $V$ is represented by a unique letter $a,b,c,\ldots$. Furthermore, we use the notation $a$ and $a^{-1}$ for points having inverse colors but belonging to the same block. As an example, the partition $\vierpartwwbb$ corresponds to the word $aaa^{-1}a^{-1}$ whereas $\positionerwwbb$ is $abcb^{-1}$. This representation is not unique since we do not specify whether $a$ is white and $a^{-1}$ is black or vice versa. Also, we sometimes use capital letters $X,Y,Z,\ldots$ in order to denote subwords of a partition seen as a word.

\subsection{Operations on partitions}\label{SectOperations}

Let us now turn to operations on the set $P^{\twocol}$.
\begin{itemize}
 \item  The \emph{tensor product} of two partitions $p\in P^{\twocol}(k,l)$ and $q\in P^{\twocol}(k',l')$ is the partition $p\otimes q\in P^{\twocol}(k+k',l+l')$ obtained by horizontal concatenation (writing $p$ and $q$ side by side), i.e. the first $k$ of the $k+k'$ upper points are connected by $p$ to the first $l$ of the $l+l'$ lower points, whereas $q$ connects the remaining $k'$ upper points with the remaining $l'$ lower points.
 \item The \emph{composition} of two partitions $q\in P^{\twocol}(k,l)$ and $p\in P^{\twocol}(l,m)$ is the partition  $pq\in P^{\twocol}(k,m)$ obtained by vertical concatenation (writing $p$ below $q$): First connect $k$ upper points by $q$ to $l$ middle points and then connect these points  by $p$ to $m$ lower points.  This yields a partition, connecting $k$ upper points with $m$ lower points. The $l$ middle points  are removed. 
 
 Note that we can compose two partitions $q\in P^{\twocol}(k,l)$ and $p\in P^{\twocol}(l',m)$ only if
 \begin{itemize}
 \item[(i)] the numbers $l$ and $l'$ coincide,
 \item[(ii)] the colorings match, i.e. the color of the $j$-th lower point of $q$ coincides with the color of the $j$-th upper point of $p$, for all $1\leq j\leq l$.
 \end{itemize} 
 \item The \emph{vertical reflection} of a partition $p\in P^{\twocol}(k,l)$ is given by the reflection  $R_v(p)\in P^{\twocol}(k,l)$ at the vertical axis.
 \item The \emph{horizontal reflection} of a partition $p\in P^{\twocol}(k,l)$ is given by the reflection  $R_h(p)\in P^{\twocol}(l,k)$ at the horizontal axis. We also call it the \emph{involution} of the partition $p$ and denote it by $p^*:=R_h(p)$.
 \item The \emph{inversion of colors} of a partition $p\in P^{\twocol}(k,l)$ is given by the partition  $R_c(p)\in P^{\twocol}(k,l)$ where all colors of the points are inverted.
 \item The \emph{verticolor reflection} of a partition $p$ is given by $\tilde p:=R_vR_c(p)$.
 \item We also have a \emph{rotation} on partitions. Let $p\in P^{\twocol}(k,l)$ be a partition connecting $k$ upper points with $l$ lower points. Shifting the very left upper point to the left of the lower points  and inverting its color gives rise to a partition in $P^{\twocol}(k-1, l+1)$, a \emph{rotated version} of $p$. Note that the point still belongs to the same block after rotation. 
We may also rotate the leftmost lower point to the very left of the upper line (again inverting its color), and we may as well rotate in the right hand side of the lines.
 In particular, for a partition $p\in P^{\twocol}(0,l)$, we may rotate the very left point to the very right and vice versa. Such a rotation on one line does \emph{not} change the colors of the points. 
 \end{itemize}
 
 Here are some examples of these operations.
 
\setlength{\unitlength}{0.5cm}
\newsavebox{\boxpexample}
   \savebox{\boxpexample}
   { \begin{picture}(3,3.5)
     \put(-1,4.35){\partii{1}{1}{2}}
     \put(-1,4.35){\partii{1}{3}{4}}
     \put(-1,0.35){\uppartii{1}{1}{2}}
     \put(-1,0.35){\upparti{2}{3}}
     \put(0.05,0){$\circ$}
     \put(1.05,0){$\circ$}
     \put(2.05,0){$\bullet$}
     \put(0.05,3.3){$\bullet$}
     \put(1.05,3.3){$\circ$}
     \put(2.05,3.3){$\bullet$}
     \put(3.05,3.3){$\circ$}
     \end{picture}}
\newsavebox{\boxqexample}
   \savebox{\boxqexample}
   { \begin{picture}(4,4.5)
     \put(-1,0.35){\upparti{4}{1}}
     \put(-1,0.35){\upparti{2}{2}}
     \put(-1,5.35){\partii{2}{2}{4}}
     \put(-1,5.35){\parti{1}{3}}
     \put(-1,5.35){\parti{1}{5}}
     \put(-1,0.35){\uppartii{1}{3}{4}}
     \put(0.05,0){$\bullet$}
     \put(1.05,0){$\circ$}
     \put(2.05,0){$\bullet$}
     \put(3.05,0){$\circ$}
     \put(0.05,4.3){$\bullet$}
     \put(1.05,4.3){$\bullet$}
     \put(2.05,4.3){$\circ$}
     \put(3.05,4.3){$\bullet$}
     \put(4.05,4.3){$\circ$}
     \end{picture}}
\newsavebox{\boxpexamplelang}
   \savebox{\boxpexamplelang}
   { \begin{picture}(3,4.5)
     \put(-1,5.35){\partii{1}{1}{2}}
     \put(-1,5.35){\partii{1}{3}{4}}
     \put(-1,0.35){\uppartii{1}{1}{2}}
     \put(-1,0.35){\upparti{3}{3}}
     \put(0.05,0){$\circ$}
     \put(1.05,0){$\circ$}
     \put(2.05,0){$\bullet$}
     \put(0.05,4.3){$\bullet$}
     \put(1.05,4.3){$\circ$}
     \put(2.05,4.3){$\bullet$}
     \put(3.05,4.3){$\circ$}
     \end{picture}}
\newsavebox{\boxpq}
   \savebox{\boxpq}
   { \begin{picture}(4,4.5)
     \put(-1,5.35){\partiii{2}{1}{2}{4}}
     \put(-1,5.35){\parti{1}{3}}
     \put(-1,5.35){\parti{1}{5}}
     \put(-1,0.35){\uppartii{1}{1}{2}}
     \put(-1,0.35){\upparti{1}{3}}
     \put(0.05,0){$\circ$}
     \put(1.05,0){$\circ$}
     \put(2.05,0){$\bullet$}
     \put(0.05,4.3){$\bullet$}
     \put(1.05,4.3){$\bullet$}
     \put(2.05,4.3){$\circ$}
     \put(3.05,4.3){$\bullet$}
     \put(4.05,4.3){$\circ$}
     \end{picture}}
\newsavebox{\boxpstern}
   \savebox{\boxpstern}
   { \begin{picture}(3,3.5)
     \put(-1,4.35){\partii{1}{1}{2}}
     \put(-1,0.35){\uppartii{1}{3}{4}}
     \put(-1,0.35){\uppartii{1}{1}{2}}
     \put(-1,0.35){\upparti{3}{3}}
     \put(0.05,3.3){$\circ$}
     \put(1.05,3.3){$\circ$}
     \put(2.05,3.3){$\bullet$}
     \put(0.05,0){$\bullet$}
     \put(1.05,0){$\circ$}
     \put(2.05,0){$\bullet$}
     \put(3.05,0){$\circ$}
     \end{picture}}
\newsavebox{\boxptilde}
   \savebox{\boxptilde}
   { \begin{picture}(3,3.5)
     \put(-1,4.35){\partii{1}{1}{2}}
     \put(-1,4.35){\partii{1}{3}{4}}
     \put(-1,0.35){\uppartii{1}{3}{4}}
     \put(-1,0.35){\upparti{2}{2}}
     \put(1.05,0){$\circ$}
     \put(2.05,0){$\bullet$}
     \put(3.05,0){$\bullet$}
     \put(0.05,3.3){$\bullet$}
     \put(1.05,3.3){$\circ$}
     \put(2.05,3.3){$\bullet$}
     \put(3.05,3.3){$\circ$}
     \end{picture}}
\newsavebox{\boxproteins}
   \savebox{\boxproteins}
   { \begin{picture}(4,3.5)
     \put(-1,4.35){\parti{1}{3}}
     \put(-1,4.35){\partii{1}{4}{5}}
     \put(-1,0.35){\uppartii{1}{2}{3}}
     \put(-1,0.35){\upparti{2}{4}}
     \put(-1,0.35){\upparti{2}{1}}
     \put(0.3,2.3){\line(1,0){2}}
     \put(0.05,0){$\circ$}
     \put(1.05,0){$\circ$}
     \put(2.05,0){$\circ$}
     \put(3.05,0){$\bullet$}
     \put(2.05,3.3){$\circ$}
     \put(3.05,3.3){$\bullet$}
     \put(4.05,3.3){$\circ$}
     \end{picture}}
\newsavebox{\boxprotzwei}
   \savebox{\boxprotzwei}
   { \begin{picture}(5,3.5)
     \put(-1,4.35){\partii{1}{5}{6}}
     \put(-1,0.35){\uppartii{1}{3}{4}}
     \put(-1,0.35){\upparti{2}{5}}
     \put(-1,0.35){\uppartii{1}{1}{2}}
     \put(0.05,0){$\bullet$}
     \put(1.05,0){$\circ$}
     \put(2.05,0){$\circ$}
     \put(3.05,0){$\circ$}
     \put(4.05,0){$\bullet$}
     \put(4.05,3.3){$\bullet$}
     \put(5.05,3.3){$\circ$}
     \end{picture}}
\newsavebox{\boxprotdrei}
   \savebox{\boxprotdrei}
   { \begin{picture}(6,2.5)
     \put(-1,0.35){\uppartiii{2}{1}{2}{7}}
     \put(-1,0.35){\uppartii{1}{3}{4}}
     \put(-1,0.35){\uppartii{1}{5}{6}}
     \put(0.05,0){$\bullet$}
     \put(1.05,0){$\circ$}
     \put(2.05,0){$\bullet$}
     \put(3.05,0){$\circ$}
     \put(4.05,0){$\circ$}     
     \put(5.05,0){$\circ$}     
     \put(6.05,0){$\bullet$}
     \end{picture}}
\newsavebox{\boxprotvier}
   \savebox{\boxprotvier}
   { \begin{picture}(6,1.5)
     \put(-1,0.35){\uppartii{1}{1}{2}}
     \put(-1,0.35){\uppartii{1}{3}{4}}
     \put(-1,0.35){\uppartiii{1}{5}{6}{7}}
     \put(0.05,0){$\bullet$}
     \put(1.05,0){$\circ$}
     \put(2.05,0){$\circ$}     
     \put(3.05,0){$\circ$}     
     \put(4.05,0){$\bullet$}
     \put(5.05,0){$\bullet$}
     \put(6.05,0){$\circ$}
     \end{picture}}
\begin{center}
\begin{picture}(25,5.5)
 \put(0,1.5){$p=$}
 \put(1.5,0){\usebox{\boxpexample}}
 \put(6,1.5){$\in P^{\twocol}(4,3)$}
 \put(15,1.5){$q=$}
 \put(16.5,0){\usebox{\boxqexample}}
 \put(22,1.5){$\in P^{\twocol}(5,4)$}
\end{picture}
\end{center}
\begin{center}
\begin{picture}(25,5.5)
 \put(0,1.5){$p\otimes q=$}
 \put(3,0){\usebox{\boxpexamplelang}}
 \put(7,0){\usebox{\boxqexample}}
 \put(12.5,1.5){$\in P^{\twocol}(9,7)$}
\end{picture}
\end{center}
\begin{center}
\begin{picture}(25,8.7)
 \put(0,3.1){$pq=$}
 \put(2,0){\usebox{\boxpexample}}
 \put(2,3.3){\usebox{\boxqexample}}
 \put(7.5,3.1){$=$}
 \put(9,1){\usebox{\boxpq}}
 \put(14.5,3.1){$\in P^{\twocol}(5,3)$}
\end{picture}
\end{center}
\begin{center}
\begin{picture}(25,4.5)
 \put(0,1.5){$p^*=$}
 \put(2,0){\usebox{\boxpstern}}
 \put(6.5,1.5){$\in P^{\twocol}(3,4)$}
 \put(15,1.5){$\tilde p=$}
 \put(17,0){\usebox{\boxptilde}}
 \put(22,1.5){$\in P^{\twocol}(4,3)$}
\end{picture}
\end{center}
\begin{center}
\begin{picture}(25,6.5)
 \put(0,5){rotated versions of $p$:}
 \put(1,0){\usebox{\boxproteins}}
 \put(6.5,1.5){$\in P^{\twocol}(3,4)$}
 \put(15.5,0){\usebox{\boxprotzwei}}
 \put(22,1.5){$\in P^{\twocol}(2,5)$}
\end{picture}
\end{center}
\begin{center}
\begin{picture}(25,4)
 \put(1,0){\usebox{\boxprotdrei}}
 \put(8.5,1){$\in P^{\twocol}(0,7)$}
 \put(15.5,0){\usebox{\boxprotvier}}
 \put(23,1){$\in P^{\twocol}(0,7)$}
\end{picture}
\end{center}

\subsection{Categories of partitions} \label{SectCateg}

A collection $\CC$ of subsets $\CC(k,l)\subseteq P^{\twocol}(k,l)$ (for all $k,l\in\N_0$) is a \emph{category of partitions}, if it is closed under the tensor product, the composition and the involution, and if it contains the bicolored pair partitions $\paarpartwb$ and  $\paarpartbw$ as well as the identity partitions $\idpartww$ and $\idpartbb$.
Examples of categories of partitions are the set of all partitions $P^{\twocol}$, the set of all pair partitions $P^{\twocol}_2$ (i.e. all blocks have length two), the set of all non-crossing partitions $NC^{\twocol}$, and the set of all non-crossing pair partitions $NC^{\twocol}_2$, as may be verified directly.

If $\CC$ is the smallest category of partitions containing the partitions $p_1,\ldots,p_n$, we write $\CC=\langle p_1,\ldots,p_n\rangle$ and say that $\CC$ is \emph{generated} by $p_1,\ldots,p_n$. Recall that $\paarpartwb, \paarpartbw, \idpartww$ and $\idpartbb$ are always in a category, so we omit to write down these generators.

\begin{lem}\label{PropCategOperations}\label{RemErasePoints}
Let $\CC\subset P^{\twocol}$ be a category of partitions. 
\begin{itemize}
\item[(a)] $\CC$ is closed under rotation and verticolor reflection.
\item[(b)] If $p\in P^{\twocol}(k,l)$ is a partition in $\CC$, we can \emph{erase} two neighbouring points of $p$ if they have different (!) colors, i.e. if the $j$-th and the $(j+1)$-th of the lower points have inverse colors, then the partition $p'\in P^{\twocol}(k,l-2)$ is in $\CC$ which is obtained from $p$ by first connecting the blocks to which the $j$-th and the $(j+1)$-th lower points belong respectively, and then erasing these two points. We may also erase neighbouring points of inverse colors on the upper line.
\item[(c)] Let $p_1\otimes p_2\in\CC$. Then $p_1\otimes\tilde p_1\in\CC$ and $p_2\otimes\tilde p_2\in\CC$. Note that we do not have $p_1\in\CC$ or $p_2\in\CC$ in general.
\item[(d)] Let $p\in\CC(0,l)$ and $q\in\CC(0,m)$. Every partition obtained from placing $q$ between two legs of $p$ is in $\CC$.
\end{itemize}
\end{lem}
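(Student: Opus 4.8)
\emph{Proof proposal.} The plan is to prove the four statements in the order (a)--(d), with the bicolored pair partitions together with closure under the involution as the tool for (a) and (b), and then with (a) itself (closure under rotation and verticolor reflection) as the engine for (c) and (d). For the rotation part of (a) it suffices to treat one elementary rotation, say moving the leftmost upper point of $p\in\CC(k,l)$, of colour $c_1$, to the leftmost lower position, where it must reappear with the inverse colour $\bar c_1$ in the same block; the other three elementary rotations are the mirror image of this and its conjugate by the involution, and arbitrary rotations follow by iteration (in particular the one-line move ``leftmost point $\to$ rightmost point'' is an up-rotation followed by a down-rotation, hence colour preserving). I would realise this elementary rotation as
\[
p'\;=\;\bigl(\id_{\bar c_1}\otimes p\bigr)\,\bigl(\pi\otimes\id_{c_2}\otimes\cdots\otimes\id_{c_k}\bigr),
\]
where $\id_c\in\{\idpartww,\idpartbb\}$ is the identity partition of colour $c$, $\pi\in\{\paarpartwb,\paarpartbw\}$ is the bicolored pair partition whose lower points have colours $\bar c_1,c_1$, and (in the paper's convention) the left factor sits below the right one; the colourings are forced so the composition is legal, all factors lie in $\CC$, and tracing the unique affected block through $\pi$ identifies the composite with the desired rotation. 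For the verticolor reflection I would flatten $p$ to a one-line partition in $\CC(0,k+l)$ by applying this elementary rotation $k$ times to the top row, apply the involution to land in $\CC(k+l,0)$, and then push the last $l$ of these points back down on the right-hand side by $l$ further elementary rotations; checking colours and positions shows the outcome is $\tilde p=R_vR_c(p)$, and every step stays inside $\CC$.

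Statement (b) is then direct (and in fact independent of the rest): if the $j$-th and $(j{+}1)$-th lower points of $p\in\CC(k,l)$ have inverse colours, compose $p$ with $\id^{\otimes(j-1)}\otimes\kappa\otimes\id^{\otimes(l-j-1)}$ placed below it, where $\kappa\in\{\baarpartwb,\baarpartbw\}$ is the bicolored cap matching those two colours and the identity strands carry the remaining lower colours of $p$. This layer lies in $\CC$ (it is an involution of a bicolored pair partition), the composition is legal, and by the definition of composition it first merges the two blocks and then deletes the two points -- exactly the erasing operation; erasing on the upper line is symmetric, using a bicolored pair partition composed on top of $p$. For (d), write $p=x_1\cdots x_l$ as a one-line word and fix a gap, between $x_i$ and $x_{i+1}$ say, with $0\le i\le l$ (the extreme gaps give $q\otimes p$ and $p\otimes q$). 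Using (a) I rotate the last $l-i$ points of $p$ up onto the top row, obtaining $p'\in\CC(l-i,i)$ with lower row $x_1\cdots x_i$, top row $\bar x_l\cdots\bar x_{i+1}$, and the block structure of $p$; then $p'\otimes q\in\CC$ by closure under tensor products, its lower row being $x_1\cdots x_i\,q_1\cdots q_m$ and its top row unchanged; finally, rotating those $l-i$ top points back down on the right (again (a)) places them to the right of $q$ and yields the one-line partition $x_1\cdots x_i\,q_1\cdots q_m\,x_{i+1}\cdots x_l$, with the blocks of $p$ on the $x$'s and of $q$ on the $q$'s -- precisely $q$ inserted into $p$ at that gap. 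Letting $i$ vary gives all such partitions.

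For (c): by (a), $\CC$ is closed under verticolor reflection, and since $R_v$ reverses the order of tensor factors while $R_c$ leaves it unchanged, $\widetilde{p_1\otimes p_2}=\tilde p_2\otimes\tilde p_1\in\CC$; hence $p_1\otimes p_2\otimes\tilde p_2\otimes\tilde p_1=(p_1\otimes p_2)\otimes\widetilde{p_1\otimes p_2}\in\CC$ by closure under tensor products. The middle part $p_2\otimes\tilde p_2$ is an ``inverse-colour palindrome'': if the top row of $p_2$ reads $a_1\cdots a_{k_2}$, then that of $\tilde p_2$ reads $\bar a_{k_2}\cdots\bar a_1$, so the two innermost upper points have inverse colours, and likewise on the lower row. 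Peeling off the innermost adjacent pair at each step, (b) lets me erase all of $p_2\otimes\tilde p_2$; the block-identifications this produces stay within the $p_2$- and $\tilde p_2$-parts by the tensor structure and never reach $p_1$ or $\tilde p_1$, so what remains is exactly $p_1\otimes\tilde p_1\in\CC$. Finally $p_2\otimes p_1$ is a rotation of $p_1\otimes p_2$, hence in $\CC$ by (a), and applying the argument just given to $p_2\otimes p_1$ yields $p_2\otimes\tilde p_2\in\CC$.

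I expect the main difficulty to be purely organisational: rotations invert colours, the two reflections do not, and several moves reverse the order of an entire row, so the real content of each step is checking that the composition is legal and that the final diagram is \emph{literally} the one claimed. The only place that needs an idea rather than care is the reduction in (c) -- recognising that $p_1\otimes p_2$ should be tensored with its verticolor reflection $\tilde p_2\otimes\tilde p_1$ precisely so that the unwanted middle part becomes an inverse-colour palindrome which (b) annihilates.
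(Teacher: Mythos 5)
Your proof is correct and follows essentially the same route as the paper: rotation via composition of $\id\otimes p$ with a bicolored pair partition padded by identities, verticolor reflection as involution plus rotations, erasure by capping with $\baarpartwb$ or $\baarpartbw$, and (c) by tensoring $p_1\otimes p_2$ with $\tilde p_2\otimes\tilde p_1$ and erasing the inverse-colour palindrome in the middle. The only (cosmetic) deviation is in (d), where the paper composes directly with $r_1\otimes q\otimes r_2$ below $p$ while you rotate the tail of $p$ up, tensor with $q$, and rotate back — both realisations are valid and yield the same insertion.
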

\begin{proof}
(a) Firstly, let $p\in P^{\twocol}(k,l)$ be a partition and let the first of the $k$ upper points be black.  Let $r\in P^{\twocol}(k-1,k-1)$ be the tensor product of the identity partitions $\idpartww$ and $\idpartbb$ with the same color pattern as the latter $k-1$ upper points of $p$. The composition $(\idpartww\otimes p)(\paarpartwb\otimes r)$ yields a partition $p'\in P^{\twocol}(k-1,l+1)$ which conincides with the partition obtained from $p$ when rotating the first upper points to the row of lower points. See also \cite[Lem 2.7]{banica2009liberation}. If now $p\in\CC$, then also $p'\in\CC$ since all partitions we used are in the category. Similarly we prove the other cases of rotation. As an example:

\setlength{\unitlength}{0.5cm}
\begin{center}
\begin{picture}(28,6)
 \put(0,1.5){$p=$}
 \put(1.5,0){\usebox{\boxpexample}}
 \put(8,1.5){$\left(\idpartww\otimes p\right)\left(\paarpartwb\otimes r\right)=$}
 \put(15.9,0.3){\upparti{3}{1}}
 \put(17,0){$\circ$}
 \put(17,3.3){$\circ$}
 \put(17.8,1.5){$\otimes$}
 \put(18.6,0){\usebox{\boxpexample}}
 \put(15.9,3.7){\uppartii{1}{1}{3}}
 \put(19.3,4.2){$\otimes$}
 \put(18.9,3.7){\upparti{2}{1}}
 \put(18.9,3.7){\upparti{2}{2}}
 \put(18.9,3.7){\upparti{2}{3}}
 \put(19.9,5.7){$\circ$}
 \put(20.9,5.7){$\bullet$}
 \put(21.9,5.7){$\circ$}
 \put(23,1.5){$=$}
 \put(24,0){\usebox{\boxproteins}}
\end{picture}
\end{center}

Secondly, if $p\in P^{\twocol}(k,l)$ is in $\CC$, then also $p^*\in P^{\twocol}(l,k)$ is in $\CC$ by the definition of a category. Rotating the $k$ upper points to below and the lower points to the upper line yields $\tilde p$, which is in $\CC$.

(b) Compose $p$ with  $r_1\otimes\baarpartwb\otimes r_2$ or $r_1\otimes\baarpartbw\otimes r_2$,  where $r_1$ and $r_2$ are  suitable tensor products of the identity partitions $\idpartww$ and $\idpartbb$. As an example:

\setlength{\unitlength}{0.5cm}
\newsavebox{\boxpcap}
   \savebox{\boxpcap}
   { \begin{picture}(3,3.5)
     \put(-1,4.35){\partii{1}{1}{2}}
     \put(-1,4.35){\partii{1}{3}{4}}
     \put(-1,4.35){\parti{2}{3}}
     \put(-1,0.35){\upparti{1}{1}}
     \put(0.3,1.3){\line(1,0){2}}
     \put(0.05,0){$\circ$}
     \put(0.05,3.3){$\bullet$}
     \put(1.05,3.3){$\circ$}
     \put(2.05,3.3){$\bullet$}
     \put(3.05,3.3){$\circ$}
     \end{picture}}
\begin{center}
\begin{picture}(28,6)
 \put(0,1.5){$p=$}
 \put(1.5,0){\usebox{\boxpexample}}
 \put(8,1.5){$p'=$}
 \put(10,2){\usebox{\boxpexample}}
 \put(9.25,3.05){\parti{2}{1}}
 \put(10.7,1){$\otimes$}
 \put(9.25,3.05){\partii{1}{2}{3}}
 \put(10.3,-0.3){$\circ$}
 \put(14,1.5){$=$}
 \put(15,0){\usebox{\boxpcap}}
\end{picture}
\end{center}

(c) By (a), we have $\tilde p_2\otimes\tilde p_1\in\CC$ and thus  $p_1\otimes p_2\otimes\tilde p_2\otimes\tilde p_1\in\CC$. Using (b), we infer $p_1\otimes \tilde p_1\in\CC$ and likewise $p_2\otimes\tilde p_2\in\CC$ (using rotation). As an example:

\setlength{\unitlength}{0.5cm}
\newsavebox{\boxppzwei}
   \savebox{\boxppzwei}
   { \begin{picture}(2,3.5)
     \put(-1,4.35){\parti{1}{2}}
     \put(-1,0.35){\upparti{1}{1}}
     \put(0.3,3.3){\line(1,-3){1}}
     \put(0.05,0){$\bullet$}
     \put(1.05,0){$\bullet$}
     \put(0.05,3.3){$\circ$}
     \put(1.05,3.3){$\bullet$}
     \end{picture}}
\newsavebox{\boxppzweitilde}
   \savebox{\boxppzweitilde}
   { \begin{picture}(2,3.5)
     \put(-1,4.35){\parti{1}{1}}
     \put(-1,0.35){\upparti{1}{2}}
     \put(0.3,0.3){\line(1,3){1}}
     \put(0.05,0){$\circ$}
     \put(1.05,0){$\circ$}
     \put(0.05,3.3){$\circ$}
     \put(1.05,3.3){$\bullet$}
     \end{picture}}
\begin{center}
\begin{picture}(28,4)
 \put(0,1.5){$p_1\otimes p_2=$}
 \put(3.5,0){\usebox{\boxpexample}}
 \put(7.7,1.5){$\otimes$}
 \put(8.5,0){\usebox{\boxppzwei}}
\end{picture}
\end{center}
\begin{center}
\begin{picture}(28,11)
 \put(0,4.5){$p_1\otimes \tilde p_1=$}
 \put(3.5,2.9){\usebox{\boxpexample}}
 \put(7.7,4.5){$\otimes$}
 \put(8.5,2.9){\usebox{\boxppzwei}}
 \put(10.7,4.5){$\otimes$}
 \put(11.5,2.9){\usebox{\boxppzweitilde}}
 \put(13.7,4.5){$\otimes$}
 \put(14.5,2.9){\usebox{\boxptilde}}
 \put(2.75,6.6){\upparti{3}{1}}
 \put(2.75,6.6){\upparti{3}{2}}
 \put(2.75,6.6){\upparti{3}{3}}
 \put(2.75,6.6){\upparti{3}{4}}
 \put(2.75,6.6){\uppartii{2}{6}{10}}
 \put(2.75,6.6){\uppartii{1}{7}{9}}
 \put(2.75,6.6){\upparti{3}{12}}
 \put(2.75,6.6){\upparti{3}{13}}
 \put(2.75,6.6){\upparti{3}{14}}
 \put(2.75,6.6){\upparti{3}{15}}
 \put(7.7,7.5){$\otimes$}
 \put(13.7,7.5){$\otimes$}
 \put(7.7,1.5){$\otimes$}
 \put(13.7,1.5){$\otimes$}
 \put(2.75,4){\parti{3}{1}}
 \put(2.75,4){\parti{3}{2}}
 \put(2.75,4){\parti{3}{3}}
 \put(2.75,4){\partii{2}{6}{10}}
 \put(2.75,4){\partii{1}{7}{9}}
 \put(2.75,4){\parti{3}{13}}
 \put(2.75,4){\parti{3}{14}}
 \put(2.75,4){\parti{3}{15}}
 \put(3.8,-0.35){$\circ$}
 \put(4.8,-0.35){$\circ$}
 \put(5.8,-0.35){$\bullet$}
 \put(15.8,-0.35){$\circ$}
 \put(16.8,-0.35){$\bullet$}
 \put(17.8,-0.35){$\bullet$}
 \put(3.8,9.55){$\bullet$}
 \put(4.8,9.55){$\circ$}
 \put(5.8,9.55){$\bullet$}
 \put(6.8,9.55){$\circ$}
 \put(14.8,9.55){$\bullet$}
 \put(15.8,9.55){$\circ$}
 \put(16.8,9.55){$\bullet$}
 \put(17.8,9.55){$\circ$}
\end{picture}
\end{center}

(d) The composition $(r_1\otimes q\otimes r_2)p$ is in $\CC$ for suitable tensor products $r_1$ and $r_2$ of the identity partitions. As an example:

\setlength{\unitlength}{0.5cm}
\begin{center}
\begin{picture}(28,2.5)
 \put(0,0){\uppartiv{2}{1}{3}{4}{5}}
 \put(0,0){\upparti{1}{2}}
 \put(0,0){\uppartiii{1}{7}{8}{9}}
 \put(6,0){,}
 \put(10,0){$\in\CC$}
 \put(1,-0.3){$\circ$}
 \put(2,-0.3){$\bullet$}
 \put(3,-0.3){$\circ$}
 \put(4,-0.3){$\bullet$}
 \put(5,-0.3){$\bullet$}
 \put(7,-0.3){$\circ$}
 \put(8,-0.3){$\bullet$}
 \put(9,-0.3){$\bullet$}
\end{picture}
\end{center}
\begin{center}
\begin{picture}(28,5)
 \put(1,0){$\Longrightarrow$}
 \put(3,0){\uppartiv{2}{1}{6}{7}{8}}
 \put(3,0){\upparti{1}{2}}
 \put(3,0){\uppartiii{1}{3}{4}{5}}
 \put(4,-0.3){$\circ$}
 \put(5,-0.3){$\bullet$}
 \put(6,-0.3){$\circ$}
 \put(7,-0.3){$\bullet$}
 \put(8,-0.3){$\bullet$}
 \put(9,-0.3){$\circ$}
 \put(10,-0.3){$\bullet$}
 \put(11,-0.3){$\bullet$}
 \put(12,0){$=$}
 \put(12,0){\upparti{2}{1}}
 \put(12,0){\upparti{2}{2}}
 \put(12,0){\uppartiii{1}{4}{5}{6}}
 \put(12,0){\upparti{2}{8}}
 \put(12,0){\upparti{2}{9}}
 \put(12,0){\upparti{2}{10}}
 \put(12,2.5){\uppartiv{2}{1}{8}{9}{10}}
 \put(12,2.5){\upparti{1}{2}}
 \put(15,0){$\otimes$}
 \put(19,0){$\otimes$}
 \put(13,-0.3){$\circ$}
 \put(14,-0.3){$\bullet$}
 \put(16,-0.3){$\circ$}
 \put(17,-0.3){$\bullet$}
 \put(18,-0.3){$\bullet$}
 \put(20,-0.3){$\circ$}
 \put(21,-0.3){$\bullet$}
 \put(22,-0.3){$\bullet$}
 \put(13,2){$\circ$}
 \put(14,2){$\bullet$}
 \put(20,2){$\circ$}
 \put(21,2){$\bullet$}
 \put(22,2){$\bullet$}
 \put(23,0){$\in\CC$}
\end{picture}
\end{center}
\end{proof}

\begin{rem}\label{RemComp}
We will often refer to ``composing a partition $q\in P^{\twocol}(k,l)$ with a partition $p\in P^{\twocol}(n,m)$'' where $l\geq n$. By this we mean the composition $(r_1\otimes p\otimes r_2)q$ where $r_i\in P^{\twocol}(a_i,a_i)$ are suitable tensor products of the identity partitions respecting the coloring of the lower points of $q$ and moreover $a_1+n+a_2=l$. In this sense, composing a partition $p\in P^{\twocol}(k,l)$ with $\baarpartwb$ or $\baarpartbw$ yields Lemma \ref{RemErasePoints}(b).
\end{rem}

Tensor product,  composition, involution, and the operations of the preceding lemma are called the \emph{category operations}.

\subsection{Special operations on partitions}

The category operations may be performed in any category of partitions. Other procedures are allowed only if certain key partitions are contained in the category. 

\begin{lem}\label{LemPartRole}
Let $\CC$ be a category of partitions and let $p\in P^{\twocol}(0,l)$ be a partition without upper points.
\begin{itemize}
\item[(a)] If $\paarpartww\otimes\paarpartbb\in\CC$, then $\CC$ is closed under permutation of colors, i.e. if $p\in\CC$, then $p'\in \CC$, where $p'$ is obtained from $p$ by permutation of the colors of the points (without changing the strings connecting the points).
\item[(b)] If $\singletonw\otimes\singletonb\in\CC$, then $\CC$ is closed under permutation of colors of \emph{neighbouring} singletons. Furthermore, we may disconnect any point from a block and turn it into a singleton.
\item[(c)] If $\vierpartwwbb\in\CC$, then $\CC$ is closed under permutation of colors of \emph{neighbouring} points belonging to the same block. Furthermore, we may connect neighbouring blocks.
\item[(d)] If $\vierpartwbwb\in\CC$, we have no permutation of colors in general, and we may only connect neighbouring blocks if they meet at two points with inverse colors.
\item[(e)] If $\positionerwwbb\in\CC$, then $\CC$ is closed under arbitrary positioning of singletons, i.e. if $p\in\CC$, then $p'\in\CC$, where $p'$ is obtained from $p$ by shifting blocks of length one to other positions.
\item[(f)] If $\positionerwbwb\in\CC$, then we may swap a singleton with a neighbour point of inverse color. This procedure inverts both colors. In  other words, if $p=XabY\in\CC$ where $b$ is a singleton and $a$ is a point of color inverse to $b$, then $p'=Xb^{-1}a^{-1}Y\in\CC$.
\end{itemize}
\end{lem}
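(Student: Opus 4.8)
The plan is to prove all six statements by the same device. Each of the asserted non-trivial moves on a partition $p\in\CC(0,l)$ will be realized by \emph{composing $p$ with a small auxiliary partition} $g\in P^{\twocol}(2,2)$ -- in the sense of Remark \ref{RemComp}, i.e. by placing $r_1\otimes g\otimes r_2$ below $p$, where $r_1,r_2$ are tensor products of $\idpartww$ and $\idpartbb$ chosen so that all colours match -- and the gadget $g$ will be manufactured from the hypothesised key partition by rotation (Lemma \ref{PropCategOperations}(a)), same-line rotation, tensoring with identities, and involution. One then simply reads off the effect of the composition on $p$ and concludes by the category axioms together with the erasing / placing-between-legs moves of Lemma \ref{PropCategOperations}. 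Whenever the two points of $p$ on which we wish to act sit at the far left or far right of the line, we first apply a same-line rotation of $p$ -- which changes no colour -- to bring them into the interior, so we may always assume they are adjacent and that $p$ has a leg on either side of them.

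For (a), rotating the two outermost legs of $\paarpartww\otimes\paarpartbb$ onto the opposite line produces $g_a:=\idpartbw\otimes\idpartwb$, a pair of ``colour-flipping strands''. Composing $g_a$ -- or its involution $\idpartwb\otimes\idpartbw$, according to which way round the two colours of $p$ lie -- below $p$ at two adjacent positions carrying inverse colours exchanges the colours of those two points while leaving every block of $p$ untouched. Since any colour permutation of $p$ (the numbers of $\circ$'s and of $\bullet$'s being fixed) is a composite of such transpositions of unequal neighbours -- this is just sorting a binary word by adjacent swaps of distinct letters -- item (a) follows.

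For (b)--(d) the recipe is the same with other gadgets. Rotating $\singletonw\otimes\singletonb$ yields $\idpartsingletonbb$ (and, the other way, $\idpartsingletonww$), a strand split into two singletons; composing it below $p$ at a single leg of a block detaches that leg as a singleton, which is the second claim of (b). For the colour swap of neighbouring singletons in (b): erase the inverse-coloured pair of singletons by Lemma \ref{PropCategOperations}(b), then place the swapped pair $\singletonb\otimes\singletonw$ -- which lies in $\CC$ because it is a same-line rotation of $\singletonw\otimes\singletonb$ -- between the two legs of $p$ where the pair sat, using Lemma \ref{PropCategOperations}(d); the degenerate cases where $p$ has no leg on one side are settled by a tensor product with the appropriate singleton. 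For (c), the various rotations and involutions of $\vierpartwwbb$ supply single-block gadgets in $P^{\twocol}(2,2)$ for all four colour patterns of their upper (and lower) row; composing such a gadget below $p$ at two adjacent positions fuses the two blocks meeting there when these are distinct, and, when the two positions already lie in a common block of $p$, merely swaps their colours -- this gives both assertions of (c), and any unwanted colours can afterwards be repaired by the colour permutation just produced. For (d) the argument is identical, except that every single-block $P^{\twocol}(2,2)$ gadget obtainable from $\vierpartwbwb$ carries a pair of inverse colours at each of its two ends (because $\vierpartwbwb$ has alternating colours); hence two blocks can be fused only if they meet at two adjacent points of inverse colour, and no colour-permuting gadget exists at all -- which is exactly the restriction in (d).

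For (e) and (f), rotating $\positionerwwbb$ (for (e)) respectively $\positionerwbwb$ (for (f)) onto $P^{\twocol}(2,2)$ gives a gadget made of two singletons and one strand; composing it below $p$ slides a singleton of $p$ past one leg of a block -- keeping both colours in case (e), inverting both in case (f), the two positioners differing precisely in whether the strand of the gadget joins equal or inverse colours. Combining these with their involutions and the further rotations that push a singleton the other way, iteration lets one move a singleton to any prescribed position (statement (e)) and perform the single swap-with-inversion of (f). The only genuine work lies in the bookkeeping hidden in the words ``compose'' and ``read off'': for each item one must choose the rotation and the identity pattern so that the colours truly match for the composition to be legal (condition (ii) in the definition of composition), check that the gadget creates no spurious connections among the blocks of $p$, and reduce boundary positions to interior ones by a preliminary same-line rotation. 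None of this is deep; the one point worth a second thought is \emph{why}, in (d), the alternating colouring of $\vierpartwbwb$ genuinely forbids both the fusion of equally-coloured neighbouring blocks and every permutation of colours -- that is, that no other gadget can be built from it.
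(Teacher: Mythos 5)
Your proof is correct and follows essentially the same route as the paper: every one of the six moves is realized by composing $p$ (in the sense of Remark \ref{RemComp}) with a small gadget obtained from the hypothesised partition by rotation -- the paper uses precisely $\idpartbw\otimes\idpartwb$ for (a), $\idpartsingletonww$/$\idpartsingletonbb$ and $\idpartsingletonwb\otimes\idpartsingletonbw$ for (b), the rotations $\vierpartrotwwww$, $\vierpartrotwbwb$, etc.\ for (c) and (d), and $\singletonw\idpartww\downsingletonw$-type gadgets for (e) and (f). The only cosmetic deviation is your treatment of the singleton colour swap in (b) by erasing the inverse-coloured pair and re-inserting $\singletonb\otimes\singletonw$ via Lemma \ref{PropCategOperations}(b) and (d) rather than composing with $\idpartsingletonwb\otimes\idpartsingletonbw$; both are immediate.
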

\begin{proof}
(a) By rotation, the partitions  $\idpartbw\otimes\idpartwb$ and $\idpartwb\otimes\idpartbw$ are in $\CC$. Note that the partitions $\idpartwb$ and $\idpartbw$ themselves are \emph{not} necessarily contained in $\CC$. 
Composing $p$ with $\idpartbw\otimes\idpartwb$ and $\idpartwb\otimes\idpartbw$ (in the sense of Remark \ref{RemComp}) yields a transposition of the colors of the points of $p$. 

(b) Using rotation and the tensor product, we infer   $\idpartsingletonwb\otimes\idpartsingletonbw$ and $\idpartsingletonbw\otimes\idpartsingletonwb$ are in $\CC$. Analoguous to (a), we see that we may permute the colors of neighbouring singletons. 
Furthermore, rotations of $\singletonw\otimes\singletonb$ yield partitions $\idpartsingletonww$ and $\idpartsingletonbb$ in $P^{\twocol}(1,1)$ consisting of two disconnected points of the same color. Composing a partition with these partitions (Remark \ref{RemComp}) yields a partition where some points are disconnected from their blocks (without changing the color).

(c) Again, similar to (a), we infer that $\CC$ is closed under  permutation of colors of neighbouring points belonging to the same block, using $\vierpartrotwbbw,\vierpartrotbwwb\in\CC$. Since we then also have $\vierpartwbwb\in\CC$, the partitions $\vierpartrotwbwb$, $\vierpartrotbwbw$, $\vierpartrotwwww$ and $\vierpartrotbbbb$ are all in $\CC$ by rotation. Composing with them effects that some neighbouring blocks are connected.

(d) We argue as in (c), but we may only use $\vierpartrotwbwb$ and $\vierpartrotbwbw$.

(e) Check that $\singletonw\idpartww\downsingletonw$, $\singletonb\idpartbb\downsingletonb$, $\singletonw\idpartbb\downsingletonw$, $\singletonb\idpartww\downsingletonb$ etc are in $\CC$ using rotation and verticolor reflection.

(f) Use $\singletonw\idpartwb\downsingletonb$ or $\singletonb\idpartbw\downsingletonw$ etc.

Here are some examples concerning the above operations.

\setlength{\unitlength}{0.5cm}
\newsavebox{\boxppermuted}
   \savebox{\boxppermuted}
   { \begin{picture}(3,3.5)
     \put(-1,4.35){\partii{1}{1}{2}}
     \put(-1,4.35){\partii{1}{3}{4}}
     \put(-1,0.35){\uppartii{1}{1}{2}}
     \put(-1,0.35){\upparti{2}{3}}
     \put(0.05,0){$\circ$}
     \put(1.05,0){$\bullet$}
     \put(2.05,0){$\circ$}
     \put(0.05,3.3){$\bullet$}
     \put(1.05,3.3){$\circ$}
     \put(2.05,3.3){$\bullet$}
     \put(3.05,3.3){$\circ$}
     \end{picture}}
\newsavebox{\boxpdisconnected}
   \savebox{\boxpdisconnected}
   { \begin{picture}(3,3.5)
     \put(-1,4.35){\partii{1}{1}{2}}
     \put(-1,4.35){\partii{1}{3}{4}}
     \put(-1,0.35){\uppartii{1}{1}{2}}
     \put(-1,0.35){\upparti{1}{3}}
     \put(0.05,0){$\circ$}
     \put(1.05,0){$\circ$}
     \put(2.05,0){$\bullet$}
     \put(0.05,3.3){$\bullet$}
     \put(1.05,3.3){$\circ$}
     \put(2.05,3.3){$\bullet$}
     \put(3.05,3.3){$\circ$}
     \end{picture}}
\newsavebox{\boxpconnected}
   \savebox{\boxpconnected}
   { \begin{picture}(3,3.5)
     \put(-1,4.35){\partii{1}{1}{2}}
     \put(-1,4.35){\partii{1}{3}{4}}
     \put(-1,0.35){\uppartiii{1}{1}{2}{3}}
     \put(-1,0.35){\upparti{2}{3}}
     \put(0.05,0){$\circ$}
     \put(1.05,0){$\circ$}
     \put(2.05,0){$\bullet$}
     \put(0.05,3.3){$\bullet$}
     \put(1.05,3.3){$\circ$}
     \put(2.05,3.3){$\bullet$}
     \put(3.05,3.3){$\circ$}
     \end{picture}}
\newsavebox{\boxpsingleton}
   \savebox{\boxpsingleton}
   { \begin{picture}(3,3.5)
     \put(-1,4.35){\partii{1}{1}{2}}
     \put(-1,4.35){\partii{1}{3}{4}}
     \put(-1,0.35){\uppartii{1}{1}{2}}
     \put(-1,0.35){\upparti{2}{3}}
     \put(-1,0.35){\upparti{1}{4}}
     \put(0.05,0){$\circ$}
     \put(1.05,0){$\circ$}
     \put(2.05,0){$\bullet$}
     \put(3.05,0){$\circ$}
     \put(0.05,3.3){$\bullet$}
     \put(1.05,3.3){$\circ$}
     \put(2.05,3.3){$\bullet$}
     \put(3.05,3.3){$\circ$}
     \end{picture}}
\newsavebox{\boxpshifted}
   \savebox{\boxpshifted}
   { \begin{picture}(3,3.5)
     \put(-1,4.35){\partii{1}{1}{2}}
     \put(-1,4.35){\partii{1}{3}{4}}
     \put(-1,0.35){\uppartii{1}{1}{2}}
     \put(-1,0.35){\upparti{1}{3}}
     \put(-1,0.35){\upparti{2}{4}}
     \put(0.05,0){$\circ$}
     \put(1.05,0){$\circ$}
     \put(2.05,0){$\circ$}
     \put(3.05,0){$\circ$}
     \put(0.05,3.3){$\bullet$}
     \put(1.05,3.3){$\circ$}
     \put(2.05,3.3){$\bullet$}
     \put(3.05,3.3){$\circ$}
     \end{picture}}
\begin{center}
\begin{picture}(28,8)
 \put(2,3){\usebox{\boxpexample}}
 \put(1.3,4){\parti{3}{1}}
 \put(1.3,4){\parti{3}{2}}
 \put(1.3,4){\parti{3}{3}}
 \put(2.3,-0.35){$\circ$}
 \put(3.3,-0.35){$\bullet$}
 \put(4.3,-0.35){$\circ$}
 \put(2.7,1.5){$\otimes$}
 \put(3.7,1.5){$\otimes$}
 \put(6.5,3){$=$}
 \put(8,2.5){\usebox{\boxppermuted}}
 \put(7.5,1){permutation}
 \put(7.5,0){of colors}
 \put(0.5,3){(a)}
 \put(17,3){\usebox{\boxpexample}}
 \put(16.3,4){\parti{3}{1}}
 \put(16.3,4){\parti{3}{2}}
 \put(16.3,4){\parti{1}{3}}
 \put(16.3,2){\parti{1}{3}}
 \put(17.3,-0.35){$\circ$}
 \put(18.3,-0.35){$\circ$}
 \put(19.3,-0.35){$\bullet$}
 \put(17.7,1.5){$\otimes$}
 \put(18.7,1.5){$\otimes$}
 \put(21.5,3){$=$}
 \put(23,2.5){\usebox{\boxpdisconnected}}
 \put(22,1){disconnecting}
 \put(22,0){a point}
 \put(15.5,3){(b)}
\end{picture}
\end{center}
\begin{center}
\begin{picture}(28,8)
 \put(2,3){\usebox{\boxpexample}}
 \put(1.3,4){\parti{3}{1}}
 \put(1.3,4){\partii{1}{2}{3}}
 \put(1.3,0){\uppartii{1}{2}{3}}
 \put(1.8,3){\parti{1}{2}}
 \put(2.3,-0.35){$\circ$}
 \put(3.3,-0.35){$\circ$}
 \put(4.3,-0.35){$\bullet$}
 \put(2.7,1.5){$\otimes$}
 \put(6.5,3){$=$}
 \put(8,2.5){\usebox{\boxpconnected}}
 \put(8,1){connecting}
 \put(8,0){blocks}
 \put(0.5,3){(c)}
 \put(17,3){\usebox{\boxpsingleton}}
 \put(16.3,4){\parti{3}{1}}
 \put(16.3,4){\parti{3}{2}}
 \put(16.3,4){\parti{1}{4}}
 \put(16.3,0){\upparti{1}{3}}
 \put(19.5,3){\line(1,-3){1}}
 \put(17.3,-0.35){$\circ$}
 \put(18.3,-0.35){$\circ$}
 \put(19.3,-0.35){$\bullet$}
 \put(20.3,-0.35){$\circ$}
 \put(17.7,1.5){$\otimes$}
 \put(18.7,1.5){$\otimes$}
 \put(21.5,3){$=$}
 \put(23,2.5){\usebox{\boxpshifted}}
 \put(21.5,1){shifting a singleton}
 \put(21.5,0){with color swapping}
 \put(15.5,3){(f)}
\end{picture}
\end{center}
\end{proof}

We formulated the above lemma only for partitions having no upper points, but the statements may be extended to arbitrary partitions $p\in P^{\twocol}(k,l)$. We then have to take into account that the colors are inverted whenever they are rotated from the upper line to the lower line or the converse.

\subsection{The non- (or one-) colored case}

Let us end this section with a comparison to the case of categories of non-colored partitions, which were studied in \cite{banica2009liberation, banicacurranspeicher2010,weber2013classification,raum2014combinatorics,raum2013easy} and in other articles and which were completely classified in \cite{raum2013full}. For the classification in the noncrossing case, see \cite{banica2009liberation} and \cite{weber2013classification}. Recall that there are exactly seven categories, given by:
\begin{align*}
 \langle\singleton\rangle &\qquad\supset &\langle\positioner\rangle &\qquad\supset &\langle\singleton\otimes\singleton\rangle &\qquad\supset &\langle\emptyset\rangle=NC_2\\
\downsubset\; & &\downsubset\; & & & &\downsubset\;\\
\quad\\
 \langle\singleton, \vierpart\rangle=NC &\qquad\supset &\langle\singleton\otimes\singleton, \vierpart\rangle & &\supset &  &\langle\vierpart\rangle
\end{align*}

By $P(k,l)$ we denote the set of \emph{non-colored partitions} where all points have no color. Likewise we use the notations $P$ for all non-colored partitions and $NC$ for all non-colored noncrossing partitions. Categories of non-colored partitions are defined like categories of two-colored partitions when forgetting all colors, see for instance \cite{banica2009liberation} or \cite{raum2013full}.
The key link between non-colored categories and two-colored categories is given by the partition $\paarpartww$ as may be seen in the next proposition. Note that $\idpartwb$ and $\idpartbw$ are rotated and possibly verticolor reflected versions of $\paarpartww$. Composing a partition $p$ with these partitions, we can change the colors of the points of $p$ to every possible color pattern. Hence, categories containing $\paarpartww$ are non-colored categories, in this sense.

To be more precise, let $\Psi:P^{\twocol}\to P$ be the map given by forgetting the colors of a two-colored partition. For a set $\CC\subset P$, we denote by $\Psi^{-1}(\CC)\subset P^{\twocol}$ its preimage under $\Psi$.

\begin{prop}\label{PropOneColored}
\begin{itemize}
\item[(a)] Let $\CC\subset P$ be a category of non-colored partitions. Then $\Psi^{-1}(\CC)\subset P^{\twocol}$ is a category of two-colored partitions containing the unicolored pair partition $\paarpartww$ (or equivalently $\paarpartbb$).
\item[(b)] Let $\CC\subset P^{\twocol}$ be a category of two-colored partitions containing the unicolored pair partition $\paarpartww$ (or equivalently $\paarpartbb$). Then $\Psi(\CC)\subset P$ is a category of non-colored partitions and $\Psi^{-1}(\Psi(\CC))=\CC$.
\end{itemize}
Hence, there is a one-to-one correspondence between categories of non-colored partitions and categories of two-colored partitions containing $\paarpartww$.
\end{prop}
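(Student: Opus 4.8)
The plan is to translate the category axioms back and forth across $\Psi$, the only genuinely delicate point being the behaviour of composition. First I would isolate a \emph{recolouring principle}: if $\paarpartww\in\CC$ (equivalently $\paarpartbb\in\CC$, since $\paarpartbb$ is the verticolor reflection of $\paarpartww$ and $\CC$ is closed under that operation by Lemma~\ref{PropCategOperations}(a)), then for every $p\in\CC$ and every two-colored partition $p_1$ with $\Psi(p_1)=\Psi(p)$ one has $p_1\in\CC$. Indeed $\idpartww$ and $\idpartbb$ lie in every category, while $\idpartwb$ and $\idpartbw$ are rotated and possibly verticolor reflected versions of $\paarpartww$, hence lie in $\CC$ by Lemma~\ref{PropCategOperations}(a); composing $p$ (in the sense of Remark~\ref{RemComp}) with suitable tensor products of these four partitions along the lower line, and likewise along the upper line, realises an arbitrary prescribed recolouring of the points of $p$ without changing the underlying strings. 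In the language of $\Psi$ this says exactly: $\Psi(q)=\Psi(p)$ and $q\in\CC$ imply $p\in\CC$.

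For part (a) I would just verify the axioms, using that $\Psi$ is a ``morphism'' for all operations: $\Psi(p\otimes q)=\Psi(p)\otimes\Psi(q)$, $\Psi(pq)=\Psi(p)\,\Psi(q)$ whenever $pq$ is defined on the two-colored level, and $\Psi(p^*)=\Psi(p)^*$; all three are immediate from the definitions. Consequently, if $p,q\in\Psi^{-1}(\CC)$, then their tensor product, their composition (when it is defined) and their involutions all have $\Psi$-image in $\CC$, hence lie in $\Psi^{-1}(\CC)$. Moreover $\Psi$ sends $\paarpartwb,\paarpartbw$ and $\paarpartww$ to the non-colored pair partition and $\idpartww,\idpartbb$ to the non-colored identity partition, all of which lie in $\CC$; thus $\Psi^{-1}(\CC)$ contains the required base partitions together with $\paarpartww$, and is therefore a category of two-colored partitions containing $\paarpartww$.

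For part (b), the same compatibility of $\Psi$ with the operations shows that $\Psi(\CC)$ contains the non-colored pair and identity partitions and is closed under tensor product and involution: lift, apply the operation inside $\CC$, project. The main obstacle is closure under composition. Given composable $p',q'\in\Psi(\CC)$, arbitrary lifts $p,q\in\CC$ need not be composable on the two-colored level, because the colours along the glued line may disagree; here the recolouring principle is essential — recolour one of the lifts along that line so the colours match, which keeps it inside $\CC$, and then compose, obtaining a partition of $\CC$ whose $\Psi$-image is $p'q'$. Finally, $\CC\subseteq\Psi^{-1}(\Psi(\CC))$ is automatic, and conversely if $\Psi(p)\in\Psi(\CC)$ choose $q\in\CC$ with $\Psi(q)=\Psi(p)$; then $p$ is a recolouring of $q$, so $p\in\CC$ by the recolouring principle, giving $\Psi^{-1}(\Psi(\CC))=\CC$.

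For the concluding statement I would observe that $\CC\mapsto\Psi^{-1}(\CC)$ (well-defined by (a)) and $\CC\mapsto\Psi(\CC)$ (well-defined by (b)) are maps between the class of categories of non-colored partitions and the class of categories of two-colored partitions containing $\paarpartww$. They are mutually inverse: $\Psi^{-1}(\Psi(\CC))=\CC$ is part of (b), while $\Psi(\Psi^{-1}(\CC))=\CC$ holds because $\Psi\colon P^{\twocol}\to P$ is surjective (every non-colored partition has a lift, e.g.\ its all-white version). This establishes the one-to-one correspondence. I expect the recolouring principle and its use in the composition step of (b) to be the only place where any real argument is needed; the remaining verifications are routine compatibility checks.
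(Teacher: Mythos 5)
Your proposal is correct and follows essentially the same route as the paper: both hinge on the observation that $\idpartwb$ and $\idpartbw$ lie in $\CC$ (as rotated/verticolor reflected versions of $\paarpartww$) and use composition with tensor products of the four $(1,1)$-partitions to recolour arbitrarily, which handles both the composition step in (b) and the inclusion $\Psi^{-1}(\Psi(\CC))\subset\CC$. Packaging this as an explicit recolouring principle and spelling out $\Psi(\Psi^{-1}(\CC))=\CC$ via surjectivity of $\Psi$ is a mild tidying of what the paper leaves implicit, not a different argument.
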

\begin{proof}
(a) It is easy to see from the definition that $\Psi^{-1}(\CC)$ is a category of partitions. Furthermore, $\paarpartww\in\Psi^{-1}(\CC)$ since $\Psi(\paarpartww)=\paarpart\in\CC$.

(b) It is easy to see that $\Psi(\CC)$ is closed under tensor product and involution and that it contains the pair partition $\paarpart$ and the identity partition $\idpart$. The composition is a bit more subtle.
If $p,q\in \Psi(\CC)$, their composition is in $\Psi(\CC)$ only if we can lift $p$ and $q$ to partitions in $\CC$ whose color patterns allow the composition in $P^{\twocol}$. 
But since $\idpartwb$ and $\idpartbw$ are in $\CC$ (by rotation), we can do so: If $p\in\Psi(\CC)$, there is a partition $p_0\in \CC$ such that $\Psi(p_0)=p$. Composing it with tensor products of $\idpartwb$ and $\idpartbw$, we may assume that all points of $p_0$ are white. Now, $\Psi(\CC)$ is closed under composition since $\CC$ is. Similarly, we prove $\Psi^{-1}(\Psi(\CC))\subset\CC$ using $\idpartwb$ and $\idpartbw$; the converse direction is trivial.
\end{proof}

\section{Dividing the categories into cases}

The classification of categories of noncrossing partitions is given by a detailed case study which we will now prepare.

\subsection{The cases $\OOO$, $\HHH$, $\SSS$ and $\BBB$} 

The first division into cases is given by  the sizes of blocks. The next lemma is formulated for arbitrary categories of partitions (not necessarily noncrossing ones).

\begin{lem}\label{LemCases}
Let $\CC\subset P^{\twocol}$ be a category of partitions.
\begin{itemize}
\item[(a)] If $\singletonw\otimes\singletonb\notin\CC$, then all blocks of  partitions $p\in\CC$ have length at least two.
\item[(b)] If $\vierpartwbwb\notin\CC$, then all blocks of partitions $p\in\CC$ have length at most two.
\end{itemize}
\end{lem}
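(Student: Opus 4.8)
All four statements I would prove by contraposition. For \emph{(a)}: suppose some $p\in\CC$ has a block of size one. By rotation (Lemma~\ref{PropCategOperations}(a)) assume $p\in\CC(0,l)$ lies on a single line, and after a cyclic rotation that its leftmost leg is this singleton (say it is white; the black case is symmetric and yields the verticolor-reflected conclusion). Rotating that leg up to the upper line produces $p_1\in\CC(1,l-1)$ with an \emph{isolated} upper black point, while the remaining $l-1$ legs form blocks lying entirely on the lower line, since they were never attached to the singleton. The composition $p_1^{*}p_1\in\CC(1,1)$ then identifies these $l-1$ middle legs with their mirror images, closing them into full blocks that get removed, so $p_1^{*}p_1$ is exactly the pair of unconnected black points $\idpartsingletonbb$. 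One further rotation turns $\idpartsingletonbb$ into $\singletonw\otimes\singletonb$, which is therefore in $\CC$. (For $l=1$ we have $p=\singletonw$ or $p=\singletonb$ and $p\otimes\tilde p$ already does it.)

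For \emph{(b)} suppose some $p\in\CC$ has a block $V$ with $|V|\ge 3$; the goal is $\vierpartwbwb\in\CC$. First I would reduce to a single three-leg block. Rotate $p$ into $\CC(0,l)$ and, using Lemma~\ref{RemErasePoints}(b), cap off every leg outside $V$ and all but three legs of $V$. A capping move is legal only between two neighbouring legs of \emph{inverse} colour; whenever the legs to be capped happen to be equally coloured, one first rotates an individual leg to the opposite line (which flips exactly that one colour) or passes to $p\otimes\tilde p$ -- whose central junction always carries inverse colours -- and erases inward, which manufactures the required adjacencies. The outcome is a single three-leg block $t\in\CC$ which, after a further rotation, lies in $\CC(0,3)$ with some colour triple $(c_1,c_2,c_3)$.

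Now there are two cases. If $(c_1,c_2,c_3)$ is not constant, cyclically rotate $t$ so that $c_1\ne c_2$; then $t\otimes\tilde t$ is a pair of three-blocks whose junction legs have inverse colours, and capping that junction pair fuses them into a single four-block coloured $(c_1,c_2,\bar c_2,\bar c_1)$, which is $\vierpartwbwb$ up to rotation. If $(c_1,c_2,c_3)$ is constant, say all white, rotate one leg of $t$ up to obtain $t_1\in\CC(1,2)$ (one black leg and two white legs in one block); then $t_1t_1^{*}\in\CC(2,2)$ is the single four-block with two white legs on top and two on the bottom, namely $\vierpartrotwwww$, and rotating it to one line gives a single four-block that is $\vierpartwwbb$ up to rotation. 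Finally I would pass from $\vierpartwwbb$ to $\vierpartwbwb$ by tensoring $\vierpartwwbb$ with its verticolor reflection (which equals $\vierpartwwbb$), capping the junction pair to get a single six-block coloured white-white-black-white-black-black, rotating so its two extreme legs -- one white, one black -- become neighbours, and capping them; the surviving four legs are coloured white-black-white-black, i.e. $\vierpartwbwb$.

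The easy part is \emph{(a)}, where the only idea needed is to isolate a single point by composing a partition with its horizontal mirror. The real work is the colour bookkeeping in \emph{(b)}: since every capping step needs a pair of inverse-coloured neighbours, the genuine obstacle is a monochromatic block -- such as $\vierpartrotwwww$ rotated onto a single line -- where no such pair exists at all. The device that defeats monochromaticity is that rotating a single leg across to the other line flips precisely that leg's colour; together with the ``fold against $\tilde p$'' trick and a short chain of tensor-then-cap moves, this is exactly the freedom required to drive any block of size $\ge 3$ to the single partition $\vierpartwbwb$.
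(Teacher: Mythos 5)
Your part (a) is correct and is essentially the paper's argument: after rotating the singleton to an extremal position, folding the partition against its (reflected) copy isolates the two disconnected points, which is exactly the mechanism behind Lemma \ref{PropCategOperations}(c); the paper simply writes $p=\singletonw\otimes q$ and invokes that lemma.

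Part (b), however, breaks at its very first step: the claimed reduction to a single three-leg block $t\in\CC(0,3)$ is impossible in general. Every move you permit yourself --- capping two neighbouring legs of inverse colours, rotating a leg to the other line, tensoring with $\tilde p$ --- preserves the quantity $c(p)=c_\circ(p)-c_\bullet(p)$ of Lemma \ref{LemC} (note $c(p\otimes\tilde p)=c(p)+c(\tilde p)=0$), and capping moreover preserves the parity of the total number of legs. Take $p=\vierpartwbwb$ itself, which satisfies the hypothesis of (b): it has $c(p)=0$ and an even number of points, so nothing reachable from it by these operations can live on three points, where $c$ is necessarily odd. More structurally, in the category $\langle\vierpartwbwb\rangle$ \emph{every} partition has all blocks of even length with alternating colours (Proposition \ref{LemBaseCategoryH}(a)), so the intermediate object $t$ your proof requires does not exist in that category at all; the subsequent case distinction on the colour triple $(c_1,c_2,c_3)$ therefore never gets off the ground. (The auxiliary claims are also shaky: rotating a leg to the other line does not make two equally coloured lower neighbours cappable, and a cap between a leg of $V$ and a leg of another block merges the blocks rather than ``capping off'' the foreign leg.) The paper's proof deliberately keeps everything even: it writes $p=a^{\epsilon_1}X_1a^{\epsilon_2}X_2a^{\epsilon_3}X_3$, forms $p\otimes\tilde p$, and erases inward from the central junction, so that each chosen leg $a^{\epsilon_i}$ meets its mirror $b^{-\epsilon_i}$ of inverse colour; this fuses the block with its reflection into a block of even length whose central and extremal legs come in inverse-coloured pairs, and one further round of rotation, tensoring with the verticolor reflection and erasing yields $a^{-1}aa^{-1}a=\vierpartwbwb$ directly. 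Your closing derivation $\vierpartwwbb\Rightarrow\vierpartwbwb$ is correct, but it is the only part of (b) that survives.
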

\begin{proof}
(a) Let $p\in\CC$ be a partition containing a block of size one. By rotation and possibly verticolor reflection, we may assume that it is of the form $\singletonw\otimes q$, with no upper points. By Lemma \ref{RemErasePoints}(c) we have $\singletonw\otimes\singletonb\in\CC$.

(b) Let $p\in\CC$ be a partition containing a block of size at least three. By rotation, it is of the form $p=a^{\epsilon_1}X_1a^{\epsilon_2}X_2a^{\epsilon_3}X_3$ with no upper points, where the points $a^{\epsilon_i}$ belong to the same block, and $\epsilon_i\in\{1,-1\}$ depending on the color. The subwords $X_1,X_2$ and $X_3$ are possibly connected to the block on the $a^{\epsilon_i}$. By verticolor reflection, we infer that the following partition is in $\CC$.
\[p\otimes\tilde p=a^{\epsilon_1}X_1a^{\epsilon_2}X_2a^{\epsilon_3}X_3\otimes \tilde{X_3}b^{-\epsilon_3}\tilde{X_2}b^{-\epsilon_2}\tilde{X_1}b^{-\epsilon_1}\]
By \ref{RemErasePoints}(b), we obtain that a partition $q:=a^{\epsilon_1}X_1'a^{\epsilon_2}a^{-\epsilon_2}\tilde{X_1'}a^{-\epsilon_1}$ is in $\CC$. Note that while the blocks on $a^{\epsilon_i}$ and $b^{-\epsilon_i}$ are \emph{not} connected in $p\otimes\tilde p$, the points $a^{\epsilon_i}$ and $a^{-\epsilon_i}$ \emph{are} connected in $q$ due to the procedure as described in Lemma \ref{RemErasePoints}(b). 
Using rotation, we infer that the partion $q':=a^{-\epsilon_1}a^{\epsilon_1}X_1'a^{\epsilon_2}a^{-\epsilon_2}\tilde{X_1'}$ is in $\CC$. Again, tensoring it with its verticolor reflected version and using Lemma \ref{RemErasePoints}(b),  we obtain  $a^{-1}aa^{-1}a\in\CC$ which implies $\vierpartwbwb\in\CC$.

The scheme of the proof is:

\setlength{\unitlength}{0.5cm}
\begin{center}
\begin{picture}(28,10)
 \put(0,6){\uppartiii{2}{1}{3}{6}}
 \put(0,6){\upparti{1}{2}}
 \put(0,6){\uppartii{1}{4}{5}}
 \put(0,6){\uppartii{2}{7}{9}}
 \put(0,6){\upparti{1}{8}} 
 \put(1,5.65){$\circ$}
 \put(2,5.65){$\circ$}
 \put(3,5.65){$\circ$}
 \put(4,5.65){$\circ$}
 \put(5,5.65){$\bullet$}
 \put(6,5.65){$\bullet$}
 \put(7,5.65){$\circ$}
 \put(8,5.65){$\bullet$}
 \put(9,5.65){$\circ$}
 \put(9.9,6.5){$\otimes$}
 \put(10,6){\uppartiii{2}{4}{7}{9}}
 \put(10,6){\upparti{1}{8}}
 \put(10,6){\uppartii{1}{5}{6}}
 \put(10,6){\uppartii{2}{1}{3}}
 \put(10,6){\upparti{1}{2}} 
 \put(19,5.65){$\bullet$}
 \put(18,5.65){$\bullet$}
 \put(17,5.65){$\bullet$}
 \put(16,5.65){$\bullet$}
 \put(15,5.65){$\circ$}
 \put(14,5.65){$\circ$}
 \put(13,5.65){$\bullet$}
 \put(12,5.65){$\circ$}
 \put(11,5.65){$\bullet$}
 \put(19.9,6.5){$=$}
 \put(20,6){\uppartiv{2}{1}{3}{4}{6}}
 \put(20,6){\upparti{1}{2}}
 \put(20,6){\upparti{1}{5}} 
 \put(21,5.65){$\circ$}
 \put(22,5.65){$\circ$}
 \put(23,5.65){$\circ$}
 \put(24,5.65){$\bullet$}
 \put(25,5.65){$\bullet$}
 \put(26,5.65){$\bullet$}
 \put(0,6.5){\partii{6}{4}{16}}
 \put(0,6.5){\partii{5}{5}{15}}
 \put(0,6.5){\partii{4}{6}{14}}
 \put(0,6.5){\partii{3}{7}{13}}
 \put(0,6.5){\partii{2}{8}{12}}
 \put(0,6.5){\partii{1}{9}{11}}
 \put(1,8.5){$a^{\epsilon_1}$}
 \put(2,8.5){$X_1$}
 \put(3,8.5){$a^{\epsilon_2}$}
 \put(4.5,8.5){$X_2$}
 \put(6,8.5){$a^{\epsilon_3}$}
 \put(8,8.5){$X_3$}
 \put(12,8.5){$\tilde X_3$}
 \put(14,8.5){$b^{-\epsilon_3}$}
 \put(15.5,8.5){$\tilde X_2$}
 \put(16.8,8.5){$b^{-\epsilon_2}$}
 \put(18,8.5){$\tilde X_1$}
 \put(19,8.5){$b^{-\epsilon_1}$}
 \put(23.5,8.5){$q$}
 \put(27,6.5){$\in\CC$}
\end{picture}
\end{center}
\begin{center}
\begin{picture}(28,8.5)
 \put(0,4.5){$\Longrightarrow$}
 \put(2,4){\uppartiv{2}{1}{2}{4}{5}}
 \put(2,4){\upparti{1}{3}}
 \put(2,4){\upparti{1}{6}}
 \put(3,3.65){$\bullet$}
 \put(4,3.65){$\circ$}
 \put(5,3.65){$\circ$}
 \put(6,3.65){$\circ$}
 \put(7,3.65){$\bullet$}
 \put(8,3.65){$\bullet$}
 \put(8.9,4.5){$\otimes$}
 \put(9,4){\uppartiv{2}{2}{3}{5}{6}}
 \put(9,4){\upparti{1}{1}}
 \put(9,4){\upparti{1}{4}}
 \put(10,3.65){$\circ$}
 \put(11,3.65){$\circ$}
 \put(12,3.65){$\bullet$}
 \put(13,3.65){$\bullet$}
 \put(14,3.65){$\bullet$}
 \put(15,3.65){$\circ$}
 \put(15.9,4.5){$=$}
 \put(16,4){\uppartiv{1}{1}{2}{3}{4}}
 \put(17,3.65){$\bullet$}
 \put(18,3.65){$\circ$}
 \put(19,3.65){$\bullet$}
 \put(20,3.65){$\circ$}
 \put(-1,4.5){\partii{4}{6}{14}}
 \put(-1,4.5){\partii{3}{7}{13}}
 \put(-1,4.5){\partii{2}{8}{12}}
 \put(-1,4.5){\partii{1}{9}{11}}
 \put(5.5,6.5){$q'$}
 \put(12.5,6.5){$\tilde q'$}
 \put(21,4.5){$\in\CC$}
\end{picture}
\end{center}
\end{proof}

Note that unlike in the non-colored case, $\singletonw\otimes\singletonb\notin\CC$ does \emph{not} imply that all blocks have even size. Consider for instance $\langle\dreipartwww\rangle$.

\begin{defn}\label{DefCases}
Let $\CC\subset P^{\twocol}$ be a category of partitions. We say that:
\begin{itemize}
\item $\CC$ is in \emph{case $\OOO$}, if $\singletonw\otimes\singletonb\notin\CC$ and $\vierpartwbwb\notin\CC$.
\item $\CC$ is in \emph{case $\BBB$}, if $\singletonw\otimes\singletonb\in\CC$ and $\vierpartwbwb\notin\CC$.
\item $\CC$ is in \emph{case $\HHH$}, if $\singletonw\otimes\singletonb\notin\CC$ and $\vierpartwbwb\in\CC$.
\item $\CC$ is in \emph{case $\SSS$}, if $\singletonw\otimes\singletonb\in\CC$ and $\vierpartwbwb\in\CC$.
\end{itemize}
\end{defn}

The coice of the letters $\OOO, \BBB, \HHH,\SSS$ comes from the non-colored situation, \cite{weber2013classification}.

\subsection{Global and local colorization}

It is convenient to study the categories $\CC\subset NC^{\twocol}$ case by case according to the above definition. According to Lemma \ref{LemPartRole}(a), we divide each of these cases into two subcases: Those categories $\CC$ containing $\paarpartww\otimes\paarpartbb$ behave very differently from those not containing this partition.

\begin{defn}\label{DefGlobalColor}
A category of partitions $\CC\subset P^{\twocol}$ is 
\begin{itemize}
 \item \emph{globally colorized}, if $\paarpartww\otimes\paarpartbb\in\CC$
 \item and \emph{locally colorized} if $\paarpartww\otimes\paarpartbb\notin\CC$.
\end{itemize}
\end{defn}

\subsection{The global parameter $k(\CC)$}

By Lemma \ref{LemPartRole}, we may permute the colors of the points of partitions in globally colorized categories.
Hence the coloring of partitions turns out to be of a global nature -- the difference between the number of white and black points is the only number that matters for the coloring of a partition in such categories.

\begin{defn}
Let $p\in P^{\twocol}$. 
\begin{itemize}
 \item Denote by $c_\circ(p)$ the sum of the number of white points on the lower line of $p$ and the number of black points on the upper line. 
 \item Denote by $c_\bullet(p)$ the sum of the number of black points on the lower line of $p$ and the number of  white points on the upper line.
 \item Define $c: P^{\twocol}\to \Z$ by $c(p):= c_{\circ}(p)-c_{\bullet}(p)$.
\end{itemize}
\end{defn}

We will mainly consider partitions $p\in P^{\twocol}(0,l)$ with no upper points. In this case $c_{\circ}$ is counting the white points whereas $c_{\bullet}$ is counting the black points of a partition.  Recall that rotating black points from the upper line to the lower line turns them into white points. 

\begin{defn}
Let $\CC$ be a category of  partitions. We set $k(\CC)$ as the minimum  of all numbers $c(p)$ such that $c(p)>0$ and $p\in\CC$, if such a  partition exists in $\CC$. Otherwise $k(\CC):=0$. 
The parameter $k(\CC)$ is called the \emph{degree of reflection} of $\CC$. It is the \emph{global parameter} of $\CC$.
\end{defn}

Note that we always find a partition $p$ in $\CC$ such that $c(p)=0$; take for instance $p=\paarpartwb$. In the next lemma, we show that the map $c: P^{\twocol}\to \Z$ behaves well with respect to the category operations. In particular, if there  exists a partition $p\in\CC$ such that $c(p)<0$, then $\tilde p\in\CC$ and $c(\tilde p)=-c(p)>0$.

\begin{lem}\label{LemC}
For the map $c: P^{\twocol}\to \Z$ the following holds true.
\begin{itemize}
\item[(a)] $c(p\otimes q)=c(p)+c(q)$
\item[(b)] $c(pq)=c(p)+c(q)$
\item[(c)] $c(p^*)=-c(p)$
\item[(d)] $c(p')=c(p)$, if $p'$ is obtained from $p$ by rotation.
\item[(e)] $c(\tilde p)=-c(p)$
\end{itemize}
\end{lem}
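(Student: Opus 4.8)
The plan is to verify each of the five formulas directly from the definitions of $c_\circ$, $c_\bullet$ and $c$, exploiting the fact that $c_\circ(p)$ and $c_\bullet(p)$ are each just counting functions (numbers of points of a given color in a given position) that add up under disjoint unions of point-sets. The main point throughout is that the category operations only rearrange, add, or delete points in ways whose effect on these counts is easy to track; the block structure plays no role whatsoever.

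\textbf{Parts (a) and (c).} For (a), note that the upper (resp.\ lower) points of $p\otimes q$ are precisely the disjoint union of the upper (resp.\ lower) points of $p$ and of $q$, with colors unchanged. Hence $c_\circ(p\otimes q)=c_\circ(p)+c_\circ(q)$ and likewise for $c_\bullet$, and subtracting gives (a). For (c), the involution $p^*=R_h(p)$ exchanges the upper and lower lines without changing colors; so a white lower point of $p$ becomes a white upper point of $p^*$, which is counted by $c_\bullet$, and a black upper point of $p$ becomes a black lower point of $p^*$, again counted by $c_\bullet$. Thus $c_\circ(p^*)=c_\bullet(p)$ and symmetrically $c_\bullet(p^*)=c_\circ(p)$, giving $c(p^*)=-c(p)$.

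\textbf{Part (d).} It suffices to treat a single elementary rotation, say moving the leftmost upper point of $p$ to the leftmost position on the lower line (the other three cases are analogous, and a general rotation is a composition of these). Such a point changes color upon rotation. If it was white (counted by $c_\bullet$ as an upper white point), it becomes a black lower point (again counted by $c_\bullet$); if it was black (counted by $c_\circ$ as an upper black point), it becomes a white lower point (again counted by $c_\circ$). All other points keep both their color and their line, so $c_\circ$ and $c_\bullet$ are individually unchanged, hence so is $c$.

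\textbf{Part (b).} Here I would first reduce to the setting described in Lemma~\ref{RemErasePoints} and Remark~\ref{RemComp}. Write $q\in P^{\twocol}(k,l)$ and $p\in P^{\twocol}(l,m)$, so $pq\in P^{\twocol}(k,m)$. The partition $pq$ has the same $k$ upper points (colors unchanged) as $q$ and the same $m$ lower points (colors unchanged) as $p$, while the $l$ middle points are deleted. Crucially, by the matching condition for composition, the color of the $j$-th lower point of $q$ equals the color of the $j$-th upper point of $p$. So in $c_\circ(q)+c_\circ(p)$ the contribution of the $l$ middle points is: (number of white lower points of $q$) $+$ (number of black upper points of $p$) $=$ (number of white middle points) $+$ (number of black middle points) $=l$, and similarly the middle points contribute exactly $l$ to $c_\bullet(q)+c_\bullet(p)$. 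Therefore $c_\circ(q)+c_\circ(p)=c_\circ(pq)+l$ and $c_\bullet(q)+c_\bullet(p)=c_\bullet(pq)+l$, and subtracting the two equations the $l$'s cancel, yielding $c(pq)=c(q)+c(p)=c(p)+c(q)$. I expect this bookkeeping of the middle points — in particular getting the color-matching observation cleanly stated — to be the one genuinely fiddly spot, though it is still elementary.

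\textbf{Part (e).} This is immediate from the previous parts: $\tilde p=R_vR_c(p)$, and $R_v$ (vertical reflection) clearly preserves all four counts $c_\circ,c_\bullet$ on each line, while $R_c$ (color inversion) swaps the roles of white and black everywhere, hence swaps $c_\circ$ and $c_\bullet$. So $c(\tilde p)=-c(p)$. Alternatively, one may observe that $\tilde p$ is obtained from $p^*$ by rotation (as in the proof of Lemma~\ref{PropCategOperations}(a)) and combine (c) with (d). Either route finishes the proof.
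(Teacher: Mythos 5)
Your proof is correct and follows essentially the same route as the paper: parts (a), (c), (d), (e) are read off directly from the definitions, and part (b) is exactly the paper's bookkeeping of the $l$ middle points (the paper labels the counts $w_1,b_1,w_2,b_2,w_3,b_3$ and writes out $c(q)$, $c(p)$, $c(pq)$ explicitly, which is the same cancellation you describe). No gaps.
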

\begin{proof}
From the definition it is clear that (a), (c), (d) and (e) hold. 
To see the invariance under composition, let $w_1$ be the number of upper white points of $q$, and $b_1$ be the number of upper black points. Let $w_2$ be the number of lower white points of $q$ and likewise $b_2$ for the black points. Since $p$ and $q$ are composable, the numbers $w_2$ and $b_2$ also count the number of upper white and upper black points of $p$, respectively. Finally, let $w_3$ and $b_3$ be the number of lower white and black points of $p$ respectively.
We thus have:
\begin{align*}
c(q)&= c_\circ(q)-c_\bullet(q)=(w_2+b_1)-(w_1+b_2)\\
c(p)&=(w_3+b_2)-(w_2+b_3)\\
c(pq)&=(w_3+b_1)-(w_1+b_3)
\end{align*}
This implies $c(pq)=c(p)+c(q)$.
\end{proof}

The global parameter $k(\CC)$ gives rise to a complete description of all possible numbers $c(p)$ of a category $\CC$.

\begin{prop}\label{LemKZ}
Let $\CC\subset P^{\twocol}$ be a category of partitions and let $k:=k(\CC)\in\N_0$. Then $c(p)\in k\Z$ for all partitions $p\in \CC$.
\end{prop}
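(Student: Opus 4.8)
The plan is to derive a contradiction from the minimality built into the definition of $k(\CC)$, using that $c$ is additive under tensor products (Lemma \ref{LemC}(a)) and that $\CC$ is closed under verticolor reflection, with $c(\tilde p) = -c(p)$ (Lemma \ref{PropCategOperations}(a) together with Lemma \ref{LemC}(e)).

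First I would settle the case $k = 0$. By definition of $k(\CC)$ there is then no $p \in \CC$ with $c(p) > 0$; and if some $p \in \CC$ had $c(p) < 0$, then $\tilde p \in \CC$ with $c(\tilde p) = -c(p) > 0$, a contradiction. Hence $c(p) = 0$ for every $p \in \CC$, which is exactly $c(p) \in 0 \cdot \Z$.

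Now suppose $k > 0$. Since $k$ is by definition the minimum of a nonempty set of positive integers, it is attained: fix $p_0 \in \CC$ with $c(p_0) = k$; then also $\tilde p_0 \in \CC$ with $c(\tilde p_0) = -k$. Let $p \in \CC$ be arbitrary and write $c(p) = qk + r$ with $q \in \Z$ and $0 \le r < k$, by Euclidean division. Put $p' := p \otimes \tilde p_0^{\otimes q}$ if $q \ge 0$, and $p' := p \otimes p_0^{\otimes(-q)}$ if $q < 0$. In either case $p' \in \CC$, since $\CC$ is closed under tensor products, and Lemma \ref{LemC}(a) gives
\[
c(p') = c(p) - qk = r.
\]
If $r > 0$, then $p' \in \CC$ with $0 < c(p') < k$, contradicting the minimality of $k = k(\CC)$. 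Hence $r = 0$, i.e. $c(p) = qk \in k\Z$.

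The argument has no real obstacle; the only points meriting care are that the minimum defining $k(\CC)$ is actually attained (so that $p_0$ exists), and that both signs of $q$ are handled, by tensoring $p$ with the appropriate number of copies of $p_0$ or of $\tilde p_0$.
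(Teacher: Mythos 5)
Your proof is correct and follows essentially the same route as the paper: both reduce to $k>0$, pick a partition realizing $c=k$, and tensor with verticolor-reflected copies to push $c(p)$ into the range $(0,k)$, contradicting minimality. Your handling of both signs of the quotient via $p_0$ versus $\tilde p_0$ is a minor cosmetic difference from the paper's reduction to $c(p)>0$ at the outset.
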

\begin{proof}
The statement is obvious for $k=0$ by definition and Lemma \ref{LemC}(e), thus we may assume $k>0$.
Let $p\in\CC$, such that $c(p)\neq 0$. By Lemma \ref{LemC}(e), we may restrict to $c(p)>0$. 
Now, assume that there is a number $m\in\N_0$ such that $km<c(p)<k(m+1)$. By the definition of $k(\CC)$, there is a partition $q\in\CC$ such that $c(q)=k$. Put $r:=\tilde q^{\otimes m}\otimes p$. Then $r\in\CC$ and $c(r)=-mc(q)+c(p)$. Hence $0<c(r)<k$ which contradicts the definition of $k(\CC)$.
\end{proof}

\subsection{The local parameters $d(\CC)$ and $K^\CC(\cutpaarpartbb)$}

We also have some local para-meters. The idea is to determine possible numbers $c(p_1)$ of subpartitions $p_1$ between two legs of a block of a partition $p\in\CC$. The situation when these two legs have the same color behaves quite differently from the case of equally colored legs.

\setlength{\unitlength}{0.5cm}
\begin{center}
\begin{picture}(22,2)
\put(0,0){$p=\quad\ldots \quad\circ \; p_1 \bullet\quad \ldots$}
\put(4.5,0.4){\line(0,1){1}}
\put(6.3,0.4){\line(0,1){1}}
\put(4,1.4){\line(1,0){2.8}}
\put(2.9,1.35){$\ldots$}
\put(6.9,1.35){$\ldots$}
\put(10,0){or}
\put(12,0){$p=\quad\ldots \quad\circ \; p_1 \circ\quad \ldots$}
\put(16.5,0.4){\line(0,1){1}}
\put(18.3,0.4){\line(0,1){1}}
\put(16,1.4){\line(1,0){2.8}}
\put(14.9,1.35){$\ldots$}
\put(18.9,1.35){$\ldots$}
\end{picture}
\end{center}

By rotation, we can reduce it to the following situation.

\begin{defn}
\begin{itemize}
\item Let $p\in NC^{\twocol}(0,l)$ be a partition with no upper points. Assume that $p$ can be decomposed as $p=p_1\otimes p_2$ where $p_1\neq \emptyset$, the partition $p_2$ has at least two points and the first and the last point of $p_2$ belong to the same block. Then we say that $p=p_1\otimes p_2$ is in \emph{nest decomposed form}. 
\item Let $NDF(\cutpaarpartwb)$ be the set of all  partitions $p=p_1\otimes p_2$ in nest decomposed form such that the first  point of $p_2$ is white and the last one is black. By $NDF^{\CC}(\cutpaarpartwb)$
we denote the intersection of $NDF(\cutpaarpartwb)$ and $\CC\subset P^{\twocol}$.
Likewise we use the notations $NDF^{\CC}(\cutpaarpartbw)$, $NDF^{\CC}(\cutpaarpartww)$ and $NDF^{\CC}(\cutpaarpartbb)$ for the three other cases.
\item By $K^{\CC}(\cutpaarpartwb)$ we denote the set of all numbers $c(p_1)\in\Z$ such that $p=p_1\otimes p_2\in NDF^{\CC}(\cutpaarpartwb)$. Likewise we define $K^{\CC}(\cutpaarpartbw)$, $K^{\CC}(\cutpaarpartww)$ and $K^{\CC}(\cutpaarpartbb)$.
\item Let $\CC\subset NC^{\twocol}$ be a category of noncrossing partitions. We define the following \emph{local parameter} $d(\CC)$. If $K^{\CC}(\cutpaarpartwb)$ contains a number $t>0$, we put $d(\CC)$ as the minimum of those numbers. Otherwise $d(\CC):=0$.
\end{itemize}
\end{defn}

\begin{ex}\label{ExNDF}
\begin{itemize}
\item[(a)] The partition $\paarpartww\otimes\paarpartbb$ is in $NDF(\cutpaarpartbb)$ with $c(p_1)=2$, whereas $\paarpartbb\otimes\paarpartww$ is in $NDF(\cutpaarpartww)$ with $c(p_1)=-2$. 
\item[(b)] The partition $p:=\paarpartww\otimes\paarpartww\otimes\paarpartww$ is in nest decomposed form, where $p_1=\paarpartww\otimes\paarpartww$ and $p_2=\paarpartww$. It is in $NDF(\cutpaarpartww)$ with $c(p_1)=4$. The partition $q:=\paarpartww\otimes\paarpartbb\otimes\paarpartww$ is in $NDF(\cutpaarpartww)$ with the decomposition $q_1:=\paarpartww\otimes\paarpartbb$ and $q_2:=\paarpartww$. 
\end{itemize}
\end{ex}

When working on the classfication of noncrossing categories, we will be interested in the sets $K^{\CC}(\cutpaarpartwb)$,  $K^{\CC}(\cutpaarpartbw)$, $K^{\CC}(\cutpaarpartww)$ and $K^{\CC}(\cutpaarpartbb)$ as local parameters. In the remainder of this subsection, we shall prove that we always have $K^{\CC}(\cutpaarpartwb)=K^{\CC}(\cutpaarpartbw)=d\Z$ for $d=d(\CC)$ and $K^{\CC}(\cutpaarpartww)=-K^{\CC}(\cutpaarpartbb)$. Thus, the study of these local parameters boils down to knowing $d(\CC)$ and $K^{\CC}(\cutpaarpartbb)$. Note that $\paarpartwb\otimes\paarpartwb$ is in $NDF^{\CC}(\cutpaarpartwb)$ for all $\CC$, hence  $K^{\CC}(\cutpaarpartwb)$ always contain the zero. As for  $K^{\CC}(\cutpaarpartbb)$ the situation is a bit more complicated -- for instance, it could be empty.

\begin{prop}\label{LemDDivisor}
Let $\CC\subset NC^{\twocol}$ be a category of noncrossing partitions. Then, $K^{\CC}(\cutpaarpartwb)=d\Z$ for $d=d(\CC)$. Furthermore, if  $k(\CC)\neq 0$, then $d(\CC)\neq 0$ and $d(\CC)$ is a divisor of $k(\CC)$.
\end{prop}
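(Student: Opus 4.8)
The plan is to show first that $K^{\CC}(\cutpaarpartwb)$ is a subgroup of $(\Z,+)$. Since every subgroup of $\Z$ equals $n\Z$ for a unique $n\in\N_0$, and since $d(\CC)$ is by definition the smallest positive element of $K^{\CC}(\cutpaarpartwb)$ (and $0$ when there is none), this yields $K^{\CC}(\cutpaarpartwb)=d(\CC)\Z$ at once; the divisibility claim then costs almost nothing.

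To get the subgroup property I would check three points. \emph{Zero:} the partition $\paarpartwb\otimes\paarpartwb$ is in nest decomposed form with $p_1=\paarpartwb$, hence $0\in K^{\CC}(\cutpaarpartwb)$. \emph{Negation:} if $p=p_1\otimes p_2\in NDF^{\CC}(\cutpaarpartwb)$ with $c(p_1)=t$, then $\tilde p=\tilde p_2\otimes\tilde p_1\in\CC$ by Lemma \ref{PropCategOperations}(a) (verticolor reflection reverses the left--right order of the points and inverts colors), and $\tilde p_2$ still has first point white, last point black and both in one block. Moving the block $\tilde p_1$ in front of $\tilde p_2$ by iterated one-line rotations -- each a category operation that preserves colors and the value of $c$ (Lemmas \ref{PropCategOperations}(a) and \ref{LemC}(d)) -- yields $\tilde p_1\otimes\tilde p_2\in NDF^{\CC}(\cutpaarpartwb)$, which records $c(\tilde p_1)=-t$ by Lemma \ref{LemC}(e). \emph{Addition:} this is the core step, treated next.

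For closure under addition, take $p=p_1\otimes p_2\in NDF^{\CC}(\cutpaarpartwb)$ with $c(p_1)=s$ and $q=q_1\otimes q_2\in NDF^{\CC}(\cutpaarpartwb)$ with $c(q_1)=t$, and write $p_2=\circ_V\,X_p\,\bullet_V$ and $q_2=\circ_{V'}\,X_q\,\bullet_{V'}$, where in each case the first point is the white and the last the black leg of one block. By Lemma \ref{RemErasePoints}(d), inserting $q$ directly after $p_1$ gives $p_1\otimes q_1\otimes q_2\otimes p_2\in\CC$. Now the last point of $q_2$, the black $\bullet_{V'}$, is the left neighbour of the first point of $p_2$, the white $\circ_V$; these have inverse colors, so Lemma \ref{RemErasePoints}(b) lets us connect the blocks $V$ and $V'$ and erase both points, producing a partition $R\in\CC$. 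In $R$ the points $\circ_{V'}$ and $\bullet_V$ lie in one common block, with all of $p_1\otimes q_1$ to their left, so $R=(p_1\otimes q_1)\otimes(\circ_{V'}\,X_q\,X_p\,\bullet_V)$ is in $NDF^{\CC}(\cutpaarpartwb)$ and records $c(p_1\otimes q_1)=s+t$; hence $s+t\in K^{\CC}(\cutpaarpartwb)$. I expect this ``erase at the junction'' move to be the only genuinely delicate point: the naive splitting $p_1\otimes q_1\otimes q_2\otimes p_2=(p_1\otimes q_1\otimes q_2)\otimes(p_2)$ only records $s+t+c(q_2)$, and it is exactly the cancellation of the neighbouring white and black legs that fuses the two nesting blocks into one and kills the superfluous $c(q_2)$.

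For the final assertion, suppose $k:=k(\CC)\ne 0$, so $k>0$, and choose $p\in\CC$ with $c(p)=k$; rotating (Lemmas \ref{PropCategOperations}(a) and \ref{LemC}(d)) we may assume $p\in\CC(0,l)$, and $p\ne\emptyset$ because $c(\emptyset)=0\ne k$. Then $p\otimes\paarpartwb\in\CC$ and $(p)\otimes(\paarpartwb)$ is in nest decomposed form, so $k=c(p)\in K^{\CC}(\cutpaarpartwb)=d(\CC)\Z$; as $k\ne0$, this forces $d(\CC)\ne0$ and $d(\CC)\mid k(\CC)$.
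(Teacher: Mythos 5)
Your proposal is correct and follows essentially the same route as the paper's proof: closure of $K^{\CC}(\cutpaarpartwb)$ under addition via the tensor $p_1\otimes q_1\otimes q_2\otimes p_2$ and erasing the adjacent black/white legs at the junction (which fuses the two nesting blocks), closure under negation via verticolor reflection plus rotation, and the divisibility claim via $p\otimes\paarpartwb$. The only differences are cosmetic (phrasing the conclusion as "subgroups of $\Z$" rather than the Euclidean algorithm, and invoking Lemma \ref{PropCategOperations}(d) instead of rotation to form the concatenated partition).
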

\begin{proof}
 We only need to show that $s+t$ and $-s$ are in $K^{\CC}(\cutpaarpartwb)$ whenever $s,t\in K^{\CC}(\cutpaarpartwb)$. Let $p=p_1\otimes p_2$ and $q=q_1\otimes q_2$ be two partitions in $NDF^{\CC}(\cutpaarpartwb)$. Firstly, we have $p_1\otimes q_1\otimes q_2\otimes p_2\in\CC$ using rotation. Using the pair partition $\baarpartbw$ and composition, we can connect the last point of $q_2$ (which is black) with the first point of $p_2$ (which is white). This erases these two points and we obtain a partition $p_1\otimes q_1\otimes r\in\CC$ in nest decomposed form such that the first point of $r$ is white and the last one is black. Thus, $c(p_1)+c(q_1)\in K^{\CC}(\cutpaarpartwb)$. Secondly, we have $\tilde p_2\otimes \tilde p_1\in\CC$, which yields $\tilde p_1\otimes \tilde p_2\in\CC$ by rotation. Now, this is a partition in nest decomposed form, such that the first point of $\tilde p_2$ is white and the last one is black. Thus, $-c(p_1)=c(\tilde p_1)\in  K^{\CC}(\cutpaarpartwb)$. 

We deduce $K^{\CC}(\cutpaarpartwb)=d\Z$ using the Euklidean algorithm like in the proof of Proposition \ref{LemKZ}. Furthermore, note that $k(\CC)\in K^{\CC}(\cutpaarpartwb)$, since $p\otimes \paarpartwb\in NDF^{\CC}(\cutpaarpartwb)$ for all $p\in\CC$. Hence, $d(\CC)\neq 0$ whenever $k(\CC)\neq 0$ and  also $k(\CC)$ is a multiple of $d(\CC)$.
\end{proof}

\begin{defn}\label{DefBs}
By $b_s$ we denote the partition in $P^{\twocol}(0,s)$ consisting of a single block of length $s$ such that all points are white, hence $b_2=\paarpartww$, $b_3=\dreipartwww$, etc.
\end{defn}

\begin{defn}\label{DefNotationNegativ}
For $s<0$ we use the following notation:
\[b_s:=\tilde b_{-s}, \singletonw^{\otimes s}:=\singletonb^{\otimes -s}, \singletonb^{\otimes s}:=\singletonw^{\otimes -s}\]
\end{defn}

The next lemma is of quite technical nature, but it will be needed in this subsection as well as in the remainder of this article several times.

\begin{lem}\label{LemBsInC}
Let $\CC\subset NC^{\twocol}$ be a category of noncrossing partitions.
\begin{itemize}
\item[(a)]  Let $\singletonw\otimes\singletonb\in\CC$. Let $q\in\{\cutpaarpartww,\cutpaarpartwb,\cutpaarpartbw,\cutpaarpartbb\}$ and let $s\in K^{\CC}(q)$. Then  $\singletonw^{\otimes s} a^{\epsilon_1} \singletonw^{\otimes -(s+c(q))} a^{\epsilon_2}\in\CC$, where $a^{\epsilon_1}$ and $a^{\epsilon_2}$ form the pair block $q$. In particular $s\in K^{\CC}(\cutpaarpartwb)$ if and only if $\positioners\in\CC$, and $K^{\CC}(\cutpaarpartwb)= K^{\CC}(\cutpaarpartbw)$.
\item[(b)] Let $p\in\CC$ be a partition such that two neighbouring points have the same color and belong to the same block. Then $\vierpartwwbb\in\CC$ or $\paarpartww\otimes\paarpartbb\in\CC$.
\item[(c)] If $\singletonw\otimes\singletonb\notin\CC$ and $NDF^{\CC}(\cutpaarpartbb)\neq \emptyset$, then $\vierpartwwbb\in\CC$ or $\paarpartww\otimes\paarpartbb\in\CC$. 
\item[(d)] If $\singletonw\otimes\singletonb\notin\CC$ and if $\CC$ contains a partition $p_1\otimes p_2$ such that $c(p_1)\neq 0$, then $\vierpartwwbb\in\CC$ or $\paarpartww\otimes\paarpartbb\in\CC$. 
\end{itemize}
\end{lem}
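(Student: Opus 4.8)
The plan is to establish the four parts in order, using Lemma \ref{LemBsInC}(b) as the engine for (c) and (d).

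\emph{Part (a).} I would fix $q$ and a witnessing partition $p=p_1\otimes p_2\in NDF^{\CC}(q)$ with $c(p_1)=s$, writing $p_2$ as $a^{\epsilon_1}$ followed by an interior part $Y$ followed by $a^{\epsilon_2}$, where $a^{\epsilon_1},a^{\epsilon_2}$ are the two ends of the nested block, coloured according to $q$. The first move is to normalise the total colour: since $\tilde p\in\CC$ and $c(\tilde p)=-c(p)$, Lemma \ref{RemErasePoints}(d) lets me insert $\tilde p$ into the gap immediately after $a^{\epsilon_1}$, producing $p'\in NDF^{\CC}(q)$ with outer part still $p_1$ but now $c(p')=0$, hence $c(p_2')=-s$. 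Then, using $\singletonw\otimes\singletonb\in\CC$ — which by Lemma \ref{LemPartRole}(b) allows disconnecting any point into a singleton, permuting colours of neighbouring singletons, and, together with Lemma \ref{RemErasePoints}(b), cancelling neighbouring opposite-coloured singletons — I would reduce $p_1$ to $\singletonw^{\otimes s}$ and reduce the whole interior of $p_2'$ to a run of singletons; cancelling opposite colours there leaves exactly $c(p_2')-c(q)=-(s+c(q))$ white singletons in the sense of Definition \ref{DefNotationNegativ}, which is the asserted partition. For the ``in particular'' part: $q=\cutpaarpartwb$ has $c(q)=0$ and the target is literally $\positioners$, while conversely $\positioners\in\CC$ lies in $NDF^{\CC}(\cutpaarpartwb)$ with outer $c$-value $s$; the equality $K^{\CC}(\cutpaarpartwb)=K^{\CC}(\cutpaarpartbw)$ then follows from the left--right symmetry of the two block types under rotation.

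\emph{Part (b).} After rotating $p$ so that it has no upper points (and verticolor-reflecting if the two same-coloured neighbours happen to be black, which turns them white), let $B$ be their common block. If $|B|=2$, rotate this block to the far right so that $p=q\otimes\paarpartww$; then Lemma \ref{RemErasePoints}(c) gives $\paarpartww\otimes\widetilde{\paarpartww}=\paarpartww\otimes\paarpartbb\in\CC$ (the degenerate case $p=\paarpartww$ being covered by $\widetilde{\paarpartww}=\paarpartbb\in\CC$). If $|B|\ge 3$, choose any third point of $B$ and run the argument of Lemma \ref{LemCases}(b) with the two white neighbours playing the role of $a^{\epsilon_1},a^{\epsilon_2}$, so that the subword $X_1$ between them is empty; the partition produced there is then $a^{\epsilon_1}a^{\epsilon_2}a^{-\epsilon_2}a^{-\epsilon_1}$ as a single block, i.e. $\vierpartwwbb\in\CC$.

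\emph{Parts (c) and (d).} The common plan is to produce a partition in $\CC$ with two same-coloured neighbours lying in a common block and then invoke (b). Two structural facts drive this: by Lemma \ref{LemCases}(a) the hypothesis $\singletonw\otimes\singletonb\notin\CC$ forces every block of every partition in $\CC$ to have at least two points, and consequently any erasure as in Lemma \ref{RemErasePoints}(b) that would shrink a block to a single point is impossible, since the resulting singleton would force $\singletonw\otimes\singletonb\in\CC$ through Lemma \ref{RemErasePoints}(c). For (d), after replacing $r=r_1\otimes r_2$ by $\tilde r_1\otimes\tilde r_2$ if needed, I may assume $c(r_1)>0$, so some block $C$ of $r_1$ has $c(C)>0$, and hence at least two white points among its (at least two) points. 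If two block-consecutive points of $C$ share a colour, I would \emph{drill}: erase opposite-coloured neighbours inside the gap of $r_1$ separating them; if that gap empties, the two points become genuine neighbours in a common block and (b) applies, and otherwise the reduced gap is a partition all of whose neighbours are monochromatic and all of whose blocks have size at least two, so it again contains two block-consecutive same-coloured points and I recurse on a strictly smaller gap. If instead every block of $r_1$ is alternating, then $c(r_1)\ne 0$ forces an alternating block of odd size at least $3$; repeatedly erasing its internal gaps would eventually reduce it to one point, which is forbidden, so some gap fails to empty and we fall back into the previous situation. Part (c) is the same scheme applied to the partition $p=p_1\otimes p_2$ with $p_2=\bullet\,X\,\bullet$ furnished by $NDF^{\CC}(\cutpaarpartbb)\ne\emptyset$: when $X=\emptyset$ the two black ends are already neighbours in a common block (equivalently, Lemma \ref{RemErasePoints}(c) gives $\paarpartbb\otimes\paarpartww\in\CC$ directly); when $c(p_1)\ne 0$ or $c(p_2)\ne 0$, a rotation puts a part of nonzero $c$-value first and (d) applies; and in the remaining case $c(p_1)=c(p_2)=0$, so $c(X)=2$, one drills inside the distinguished block $B$.

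The step I expect to be the main obstacle is this drilling argument underlying (c) and (d): one must verify that the iterated erasures terminate and that the only alternative to exposing two same-coloured neighbours is the creation of a singleton (forbidden by hypothesis), while carefully tracking how the block-mergings occurring during erasure interact with the distinguished block — this is cleanest to organise as an induction on the number of points of the partition.
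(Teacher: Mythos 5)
Your proof is correct, and parts (a) and (b) essentially coincide with the paper's own argument. In (a) the paper reaches the normalised interior $\singletonw^{\otimes-(s+c(q))}$ by a slightly different route (disconnect the pair block into singletons, verticolor-reflect the resulting all-singleton partition and re-insert it into the gap) rather than by inserting $\tilde p$ up front, but the effect is identical; the only imprecision is that $K^{\CC}(\cutpaarpartwb)=K^{\CC}(\cutpaarpartbw)$ needs verticolor reflection as well as rotation — $\positioners$ and $\positionersinv$ are exchanged by $p\mapsto\tilde p$ followed by one rotation, whereas rotation alone only gives $-s\in K^{\CC}(\cutpaarpartbw)$. Part (b) is the paper's proof: the $|B|=2$ case via Lemma \ref{PropCategOperations}(c), and for $|B|\ge 3$ the same tensor-with-$\tilde p$-and-erase computation, which you simply quote from Lemma \ref{LemCases}(b) instead of redoing.

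Where you genuinely diverge is in (c) and (d), and there the paper is much shorter. Given $p_1\otimes p_2\in\CC$ with $c(p_1)\neq 0$, the paper cancels adjacent opposite-coloured pairs throughout \emph{all} of $p_1$ (its blocks never leak into $p_2$, so this is legitimate), leaving a nonempty monochromatic noncrossing partition $p_1'$ whose blocks all have size at least two; an innermost block of $p_1'$ then has two \emph{adjacent} legs of the same colour, and (b) applies. For (c) the only extra case is $c(p_1)=0$, where $p_1$ is erased entirely and $p_2$ is rotated so that its two black endpoints become adjacent. Your drilling recursion — fixing one block $C$ with $c(C)>0$ and descending through the gaps between consecutive legs, with the alternating-block case excluded because emptying all its gaps would shrink $C$ to a forbidden singleton — reaches the same conclusion, and the termination and exhaustiveness worries you flag at the end do resolve (gap sizes strictly decrease, and a reduced gap is either empty, making the two bounding legs adjacent, or monochromatic with all blocks of size at least two). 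But once you have observed that a subword can be reduced with its blocks staying inside it, you may as well reduce all of $p_1$ in one step and dispense with the distinguished block and the recursion altogether.
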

\begin{proof}
(a) Let $q\in\{\cutpaarpartww,\cutpaarpartwb,\cutpaarpartbw,\cutpaarpartbb\}$ and let $s\in K^{\CC}(q)$.
Let $p=p_1\otimes p_2$ be a partition in $NDF^{\CC}(q)$ such that $c(p_1)=s$. 
By Lemma \ref{LemPartRole}(b), we may assume that $p_1$ consists only of singletons and composing $p$ with $\baarpartwb$ and $\baarpartbw$ (Remark \ref{RemComp}), we may assume that all points of $p_1$ have the same color, thus 
 $p_1=\singletonw^{\otimes s}$ (with the convention of Definition \ref{DefNotationNegativ}). We proceed in the same way for the points between the first and the last point of $p_2$ and we infer that  $\singletonw^{\otimes s} a^{\epsilon_1} \singletonw^{\otimes\alpha} a^{\epsilon_2}$ is in $\CC$ for some $\alpha\in\Z$. Here $a^{\epsilon_1}$ and $a^{\epsilon_2}$  form the pair block $q$. Now, disconnecting these two points using the partitions  $\idpartsingletonww$ and $\idpartsingletonbb$ again (Lemma \ref{LemPartRole}), we deduce $\singletonw^{\otimes s} \otimes \singletonw^{\otimes\alpha + c(q)}\in\CC$. Using verticolor reflection and Lemma \ref{PropCategOperations}(d), this implies:
\[\singletonw^{\otimes s} a^{\epsilon_1} \singletonw^{\otimes\alpha} \singletonw^{\otimes -s} \singletonw^{\otimes-\alpha - c(q)} a^{\epsilon_2}\in\CC\]
Using the pair partitions, we obtain $\singletonw^{\otimes s} a^{\epsilon_1} \singletonw^{\otimes -s - c(q)} a^{\epsilon_2}\in\CC$.

Finally note that we have $\positioners\in\CC$ if and only if $\positionersinv\in\CC$, using verticolor reflection and rotation. This proves $K^{\CC}(\cutpaarpartwb)=K^{\CC}(\cutpaarpartbw)$.

(b) Let $p\in\CC(0,l)$ be a partition of the form $p=aaX$, where $aa$ are two points of the same color belonging to the same block. If this block is of size two, we have $\paarpartww\otimes\paarpartbb\in\CC$ by Lemma \ref{RemErasePoints}. Otherwise, we consider $p\otimes \tilde p\in\CC$ and using the pair partition, we infer $\vierpartwwbb\in\CC$.

(c) If $\CC$ contains a partition $p_1\otimes p_2$ such that $c(p_1)> 0$, we use the pair partition to erase all black points in $p_1$. We obtain a partition $p_1'\otimes p_2\in\CC$ such that all points of $p_1'$ are white. If $\singletonw\otimes\singletonb\notin\CC$, all blocks in $p_1'$ have size at least two (Lemma \ref{LemCases}). Since $p_1'$ is noncrossing, we find two neighbouring points belonging to the same block. Using (b), we are done; likewise for $c(p_1)<0$.

(d) If $NDF^{\CC}(\cutpaarpartbb)\neq\emptyset$, then there is a $p=p_1\otimes p_2\in\CC$ in nest decomposed form such that the first and the last point of $p_2$ are black. If $c(p_1)=0$, we erase $p_1$ using the pair partitions and we obtain $p_2\in\CC$. By rotation, $p_2$ is of the form as in (b).  If $c(p_1)\neq 0$ we use (c).
\end{proof}

We finally prove that our local parameters are given only by $d(\CC)$ and $K^{\CC}(\cutpaarpartbb)$.

\begin{lem}\label{Lem213NEU}
Let $\CC\subset NC^{\twocol}$ and $d=d(\CC)$.
\begin{itemize}
\item[(a)] We always  have:
\[K^{\CC}(\cutpaarpartwb)=K^{\CC}(\cutpaarpartbw)=d\Z \quad\textnormal{and}\quad K^{\CC}(\cutpaarpartww)= -K^{\CC}(\cutpaarpartbb)\]
\item[(b)] In case  $\paarpartww\otimes\paarpartbb$ or $\vierpartwwbb$ are in $\CC$, we have:
\[ K^{\CC}(\cutpaarpartwb)=K^{\CC}(\cutpaarpartbw)=K^{\CC}(\cutpaarpartww)= K^{\CC}(\cutpaarpartbb)=d\Z\]
\item[(c)] Let $\paarpartww\otimes\paarpartbb\notin\CC$ and $\vierpartwwbb\notin\CC$. If $\positionertpluseins\in\CC$ for some $t\in\N_0$, we denote by $r$ the smallest such number $t$. We then have $r\neq 1$ and:
\[K^{\CC}(\cutpaarpartwb)=K^{\CC}(\cutpaarpartbw)=d\Z \quad\textnormal{and}\quad K^{\CC}(\cutpaarpartbb)= -K^{\CC}(\cutpaarpartww)=d\Z+(r+1)\]
\item[(d)] If none of the partitions  $\paarpartww\otimes\paarpartbb$,  $\vierpartwwbb$ and $\positionertpluseins$  is in $\CC$ (for any $t\in\N_0$), then
\[K^{\CC}(\cutpaarpartwb)=K^{\CC}(\cutpaarpartbw)=d\Z \quad\textnormal{and}\quad K^{\CC}(\cutpaarpartbb)= K^{\CC}(\cutpaarpartww)=\emptyset\]
\end{itemize}
\end{lem}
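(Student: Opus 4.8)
The plan is to prove parts (b), (c), (d) first — their hypotheses exhaust all categories $\CC\subset NC^{\twocol}$, according to whether $\paarpartww\otimes\paarpartbb$ or $\vierpartwwbb$ lies in $\CC$, and failing that whether some $\positionertpluseins$ does — and then to read off (a). Two facts are used throughout. First, $K^{\CC}(\cutpaarpartwb)=d\Z$ is Proposition \ref{LemDDivisor}; and $K^{\CC}(\cutpaarpartbw)=d\Z$ follows from the last assertion of Lemma \ref{LemBsInC}(a) when $\singletonw\otimes\singletonb\in\CC$, and otherwise from the color-mirrored form of Proposition \ref{LemDDivisor} together with Lemma \ref{LemBsInC}(d) (if $\singletonw\otimes\singletonb\notin\CC$, any nonzero value would force $\vierpartwwbb$ or $\paarpartww\otimes\paarpartbb$ into $\CC$, so $d=0$ and both sets are $\{0\}$). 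Second, verticolor reflection applied to a nest decomposed $p=p_1\otimes p_2\in NDF^{\CC}(\cutpaarpartbb)$ gives $\tilde p=\tilde p_2\otimes\tilde p_1$ with $\tilde p_2$ of $\cutpaarpartww$-type; rotating that chunk to the rear, resp. rotating the $\tilde p_1$-part to the rear, yields (after renaming) elements of $NDF^{\CC}(\cutpaarpartww)$ and of $NDF^{\CC}(\cutpaarpartbb)$ with first factor of $c$-value $-c(p_1)$. Hence, unconditionally, $K^{\CC}(\cutpaarpartww)=-K^{\CC}(\cutpaarpartbb)$ and $K^{\CC}(\cutpaarpartbb)$ is symmetric under negation. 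Also, any block of size one forces $\singletonw\otimes\singletonb\in\CC$ (as in the proof of Lemma \ref{LemCases}(a)), so since $\positionertpluseins$ contains a singleton, the hypothesis of (c) already entails $\singletonw\otimes\singletonb\in\CC$; under that hypothesis Lemma \ref{LemBsInC}(a) normalizes everything, identifying $s\in K^{\CC}(q)$ with containment of an explicit $\positioner$-partition, and in particular identifying $\positionertpluseins$ with the normalized $\cutpaarpartbb$-partition of first-factor value $t+1$.

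\textbf{The two gluing computations.} The substance is to pin down $K^{\CC}(\cutpaarpartbb)$, where the obstacle is that unicolored pairs need not lie in $\CC$, so the gluing-by-erasing used for $\cutpaarpartwb$ in Proposition \ref{LemDDivisor} is not directly available. I will establish: (1) \emph{Given $p=p_1\otimes[\bullet Y\bullet]\in NDF^{\CC}(\cutpaarpartbb)$ and $q=q_1\otimes[\circ Z\circ]\in NDF^{\CC}(\cutpaarpartww)$}, rotate to $p_1\otimes q_1\otimes[\circ Z\circ]\otimes[\bullet Y\bullet]\in\CC$, erase the adjacent inverse-colored pair between the two blocks (merging them into one block whose first point is white and last point is black, by Lemma \ref{RemErasePoints}(b)), and read off a $\cutpaarpartwb$-partition with first factor $p_1\otimes q_1$; this yields $K^{\CC}(\cutpaarpartbb)+K^{\CC}(\cutpaarpartww)\subseteq d\Z$, so with negation-symmetry $K^{\CC}(\cutpaarpartbb)$ lies in a single coset of $d\Z$. (2) \emph{When $\singletonw\otimes\singletonb\in\CC$}, starting from the normalized $\cutpaarpartbb$-partition $\singletonw^{\otimes s}\,\bullet\,\singletonw^{\otimes-(s-2)}\,\bullet$ of first-factor value $s$, insert $\positionerdinv$ (which lies in $\CC$ because $d\in K^{\CC}(\cutpaarpartbw)$) just before the final $\bullet$ via Lemma \ref{PropCategOperations}(d), erase the adjacent inverse pair (merging the two pair blocks into one with both endpoints black), erase the resulting string of inverse-colored singletons, and rotate trailing singletons to the front; the outcome is a $\cutpaarpartbb$-partition of first-factor value $s+d$, so $K^{\CC}(\cutpaarpartbb)$, when nonempty, is stable under $\pm d$ (combining with negation-symmetry). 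The exponent bookkeeping uses the sign conventions of Definition \ref{DefNotationNegativ}.

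\textbf{Assembling (b), (c), (d) and deducing (a).} For (c): $\singletonw\otimes\singletonb\in\CC$ holds automatically, so $K^{\CC}(\cutpaarpartwb)=K^{\CC}(\cutpaarpartbw)=d\Z$ by Proposition \ref{LemDDivisor} and Lemma \ref{LemBsInC}(a); by definition of $r$ we have $\positionerrpluseins\in\CC$, i.e.\ $r+1\in K^{\CC}(\cutpaarpartbb)$, whence fact (2) gives $d\Z+(r+1)\subseteq K^{\CC}(\cutpaarpartbb)$, fact (1) gives the reverse inclusion, and verticolor reflection gives $-K^{\CC}(\cutpaarpartww)=d\Z+(r+1)$; finally $r\neq1$ because $\singletonw\otimes\singletonw\otimes\paarpartbb$ (which is $\positionertpluseins$ for $t=1$) has two neighbouring equal-colored points in one block, so by Lemma \ref{LemBsInC}(b) it would drag $\vierpartwwbb$ or $\paarpartww\otimes\paarpartbb$ into $\CC$, against the hypothesis of (c). For (b): the extra recoloring/connection operations available from $\paarpartww\otimes\paarpartbb$ (Lemma \ref{LemPartRole}(a)) or from $\vierpartwwbb$ (Lemma \ref{LemPartRole}(c)) let one convert any $\cutpaarpartbb$-nested block into a $\cutpaarpartwb$-nested one without changing the first factor's $c$-value (in the globally colorized case, transpose the colors of the two outer legs and erase the singleton so created against an inverse-colored neighbour), so all four sets equal $d\Z$. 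For (d): if $NDF^{\CC}(\cutpaarpartbb)\neq\emptyset$, then either $\singletonw\otimes\singletonb\notin\CC$, contradicting Lemma \ref{LemBsInC}(c), or $\singletonw\otimes\singletonb\in\CC$ and, normalizing and using $\pm d$-stability to reach a first-factor value $\geq1$, the normalized partition is a $\positionertpluseins$ with $t\in\N_0$, again a contradiction; hence $K^{\CC}(\cutpaarpartbb)=K^{\CC}(\cutpaarpartww)=\emptyset$, while $K^{\CC}(\cutpaarpartwb)=K^{\CC}(\cutpaarpartbw)=d\Z$ as in paragraph 1. Part (a) is then immediate: every $\CC$ satisfies exactly one of the hypotheses of (b), (c), (d), and in each case $K^{\CC}(\cutpaarpartwb)=K^{\CC}(\cutpaarpartbw)=d\Z$ and $K^{\CC}(\cutpaarpartww)=-K^{\CC}(\cutpaarpartbb)$ have been proved.

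\textbf{Main obstacle.} The crux is gluing computation (2): because the nest decomposed form is rigid — the suffix must both begin and end in the same block, with both endpoints black — one cannot raise the first factor's $c$-value by merely tensoring a positioner on the left, and is forced into the insert/erase/rotate sequence above, with careful tracking of which singletons are consumed once the exponents turn negative. Everything else reduces to Proposition \ref{LemDDivisor}, Lemmas \ref{LemBsInC} and \ref{LemPartRole}, and the verticolor-reflection bookkeeping.
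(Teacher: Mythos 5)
Your overall architecture matches the paper's: normalize via Lemma \ref{LemBsInC}(a), glue a $\cutpaarpartbb$-representative against a $\cutpaarpartww$-representative to land in $K^{\CC}(\cutpaarpartwb)=d\Z$, translate within $K^{\CC}(\cutpaarpartbb)$ by $d$ using a positioner, handle (b) by recoloring the outer legs and (d) by contradiction. But your strategy paragraph asserts, unconditionally, that $K^{\CC}(\cutpaarpartbb)$ is ``symmetric under negation'', and this is false: it contradicts the very statement you are proving, since in case (c) the conclusion is $K^{\CC}(\cutpaarpartbb)=d\Z+(r+1)$, and for $r=0$ and $d\geq 3$ (which occurs, e.g.\ for $\categ{\BBB'}{\loc}{d,d,0}$) the set $d\Z+1$ is not equal to $-(d\Z+1)=d\Z-1$. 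The purported derivation does not work either: verticolor reflection sends $p_1\otimes(\bullet Y\bullet)$ to $(\circ\tilde Y\circ)\otimes\tilde p_1$, which after rotation lies in $NDF^{\CC}(\cutpaarpartww)$ — the colors of the outer legs flip, so one only obtains the correct identity $K^{\CC}(\cutpaarpartww)=-K^{\CC}(\cutpaarpartbb)$, never an element of $NDF^{\CC}(\cutpaarpartbb)$ with negated first factor (for that one would need the recoloring operations available only in case (b)). You invoke the false claim twice. In gluing (1) it is harmless: $K^{\CC}(\cutpaarpartbb)+K^{\CC}(\cutpaarpartww)\subseteq d\Z$ combined with $K^{\CC}(\cutpaarpartww)=-K^{\CC}(\cutpaarpartbb)$ already gives $K^{\CC}(\cutpaarpartbb)-K^{\CC}(\cutpaarpartbb)\subseteq d\Z$, i.e.\ a single coset. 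In gluing (2) it is load-bearing as written: you derive stability under one of $\pm d$ and appeal to negation-symmetry for the other direction. The repair is easy — since $-d\in K^{\CC}(\cutpaarpartbw)=d\Z$ as well, run the same insertion with the positioner of parameter $-d$ — but as it stands this step rests on a false statement.

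Two smaller points. In gluing (1), the arrangement $p_1\otimes q_1\otimes[\circ Z\circ]\otimes[\bullet Y\bullet]$ is not reachable from $p\otimes q$ by rotation alone (cyclic rotation always leaves $p_1$ or $q_1$ wedged between the two blocks); you need Lemma \ref{PropCategOperations}(d) to insert all of $q$ between the last point of $p_1$ and the first leg of $[\bullet Y\bullet]$ — this is in effect what the paper does with normalized representatives, composing $\positionersminuszwei\otimes\positionerrminuseins$ with $\baarpartbw$. And in gluing (2) the bookkeeping yields first-factor value $s-d$ rather than $s+d$ (the trailing $\singletonb^{\otimes d}$ rotates to the front and contributes $-d$), which again is immaterial once both signs of $d$ are available. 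With these corrections the proof goes through and is essentially the paper's.
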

\begin{proof}
(a) If $p_1\otimes p_2\in\CC$ is a partition in nest decomposed form such that the first and the last point of $p_2$ have the same color, then $\tilde p_1\otimes \tilde p_2$ is in $\CC$ proving  $K^{\CC}(\cutpaarpartww)=-K^{\CC}(\cutpaarpartbb)$. The identity $K^{\CC}(\cutpaarpartwb)=K^{\CC}(\cutpaarpartbw)=d\Z$ follows from (b), (c) and (d).

(b) Let $p_1\otimes p_2$ be in nest decomposed form. If $\vierpartwwbb\in\CC$, then the partitions $\vierpartwrotwwb$, $\vierpartwrotbww$, $\vierpartbrotwbb$ and $\vierpartbrotbbw$ are all in $\CC$. Composing $p_1\otimes p_2$ with them (Remark \ref{RemComp}), we can change the first and the last point of $p_2$ arbitrarily. This proves $ K^{\CC}(\cutpaarpartwb)=K^{\CC}(\cutpaarpartbw)=K^{\CC}(\cutpaarpartww)= K^{\CC}(\cutpaarpartbb)$ which is $d\Z$ by Proposition \ref{LemDDivisor}. In the case $\paarpartww\otimes\paarpartbb\in\CC$, use $\paarpartww\otimes\idpartwb$, $\idpartwb\otimes\paarpartww$, $\idpartbw\otimes\paarpartbb$ and $\paarpartbb\otimes\idpartbw$.

(c) From $\singletonw\otimes\positionert\in\CC$, we deduce $\singletonw\otimes\singletonb\in\CC$ using Lemma \ref{PropCategOperations}(c). By Lemma \ref{LemBsInC}, this shows $K^{\CC}(\cutpaarpartwb)= K^{\CC}(\cutpaarpartbw)$. Next, we prove $K^{\CC}(\cutpaarpartbb)=d\Z+(r+1)$. First note that $r\neq 1$ since $\paarpartww\otimes\paarpartbb\notin\CC$. Moreover, $\singletonw^{\otimes r-1}\otimes\singletonb^{\otimes r-1}\in\CC$, since $\singletonw\otimes\singletonb\in\CC$. Let $m\in\Z$. Then $dm\in K^{\CC}(\cutpaarpartwb)$ which by Lemma \ref{LemBsInC} yields:
\setlength{\unitlength}{0.5cm}
\begin{center}
 \begin{picture}(8,1.5)
     \put(-2,0){$p:=$}
     \put(1.6,0.2){\line(0,1){0.9}}
     \put(6.4,0.2){\line(0,1){0.9}}
     \put(1.6,1.1){\line(1,0){4.8}}
     \put(0.05,-0.05){$^\uparrow$}
     \put(0.2,0.4){\tiny$^{\otimes dm}$}
     \put(5.05,-0.05){$^\uparrow$}
     \put(5.2,0.4){\tiny$^{\otimes dm}$}
     \put(2.2,0.4){\tiny$^{\otimes r-1}$}
     \put(3.7,0.4){\tiny$^{\otimes r-1}$}
     \put(0,-0.2){$\circ$}
     \put(1.4,-0.2){$\circ$}
     \put(5,-0.2){$\bullet$}
     \put(6.2,-0.2){$\bullet$}
     \put(2.05,-0.05){$^\uparrow$}
     \put(3.55,-0.05){$^\uparrow$}
     \put(2,-0.2){$\circ$}
     \put(3.5,-0.2){$\bullet$}
     \put(7,0){$\in\CC$}
 \end{picture}
\end{center}
Furthermore, the following partition is in $\CC$ since it is a rotated version of $\positionerrpluseins$:
\setlength{\unitlength}{0.5cm}
\begin{center}
 \begin{picture}(5,2)
     \put(0,0){$q:=$}
     \put(3.6,0.2){\line(0,1){1.2}}
     \put(2.05,-0.05){$^\uparrow$}
     \put(2.2,0.4){\tiny$^{\otimes r+1}$}
     \put(4.45,0.8){$^\downarrow$}
     \put(4.6,0.6){\tiny$^{\otimes r-1}$}
     \put(2,-0.2){$\circ$}
     \put(3.4,-0.2){$\bullet$}
     \put(3.4,1.4){$\circ$}
     \put(4.4,1.4){$\circ$}
 \end{picture}
\end{center}
Now, composing $p$ with $q$ we may shift $r-1$ white singletons (or rather one black singleton, if $r=0$) from the inside of the pair of $p$ to the outside, by which their number increases to $r+1$ white singletons. Furthermore, the color of the first point of the pair changes from white to black. We conclude that $dm+(r+1)$ is in $K^{\CC}(\cutpaarpartbb)$, which proves $d\Z+r+1\subset K^{\CC}(\cutpaarpartbb)$.

Conversely, let $s\in K^{\CC}(\cutpaarpartbb)$. Then $p:=\positionersminuszwei\in\CC$ by Lemma \ref{LemBsInC}. By verticolor reflection, we also have $q:=\positionerrminuseins\in\CC$. Composing $p\otimes q$ with $\baarpartbw$ yields $s-(r+1)\in K^{\CC}(\cutpaarpartwb)=d\Z$.

(d) If there was an element $t\in K^{\CC}(\cutpaarpartbb)$, then $\positionerspluseins\in\CC$ with $s:=t-1$, by Lemma \ref{LemBsInC}; a contradiction. Hence $K^{\CC}(\cutpaarpartbb)=\emptyset$ which implies $K^{\CC}(\cutpaarpartww)=\emptyset$ and $NDF^{\CC}(\cutpaarpartbb)=NDF^{\CC}(\cutpaarpartww)=\emptyset$. In order to prove $K^{\CC}(\cutpaarpartwb)=K^{\CC}(\cutpaarpartbw)$,  the case $\singletonw\otimes\singletonb\in\CC$ is covered by Lemma \ref{LemBsInC}. Now, assume $\singletonw\otimes\singletonb\notin\CC$. 
Let $p_1\otimes p_2\in\CC$ be a partition in nest decomposed form such that the first and the last point of $p_2$ have inverse colors.  Assume $c(p_1)>0$. Using the pair partitions, we may erase all black points in $p_1$ and we obtain a partition $p_1'\otimes p_2$ such that all points in $p_1'$ are white. Since all blocks in $p_1'$ have size at least two by Lemma \ref{LemCases}, we obtain a partition in $NDF^{\CC}(\cutpaarpartww)$ up to rotation which is a contradiction. Likewise we show that $c(p_1)$ cannot be strictly negative. We conclude that $c(p_1)$ is zero and hence $K^{\CC}(\cutpaarpartwb)=\{0\}$ and $K^{\CC}(\cutpaarpartwb)=\{0\}$. Thus $K^{\CC}(\cutpaarpartwb)=K^{\CC}(\cutpaarpartwb)$.
\end{proof}

In the globally colorized case, the local parameters are almost trivial as can be seen in the next lemma.

\begin{lem}\label{LemRDGlobal}\label{LemRDCaseO}
If $\CC\subset NC^{\twocol}$ is globally colorized, then $d(\CC)\in\{1,2\}$ with $d(\CC)=1$ if and only if $\positionerwwbb\in\CC$. Moreover:
\[ K^{\CC}(\cutpaarpartwb)=K^{\CC}(\cutpaarpartbw)=K^{\CC}(\cutpaarpartww)= K^{\CC}(\cutpaarpartbb)=d\Z\]
\end{lem}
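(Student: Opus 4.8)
The plan is to read off the two displayed identities at once, then pin down the value of $d(\CC)$, and finally prove the biconditional; only this last part requires real work.

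Since $\CC$ is globally colorized, $\paarpartww\otimes\paarpartbb\in\CC$, so Lemma \ref{Lem213NEU}(b) applies and gives $K^{\CC}(\cutpaarpartwb)=K^{\CC}(\cutpaarpartbw)=K^{\CC}(\cutpaarpartww)=K^{\CC}(\cutpaarpartbb)=d\Z$ with $d=d(\CC)$, which is the second assertion of the lemma. Moreover $\paarpartww\otimes\paarpartbb$ is itself in $NDF^{\CC}(\cutpaarpartbb)$ with $c(p_1)=2$ (Example \ref{ExNDF}(a)), hence $2\in K^{\CC}(\cutpaarpartbb)=d\Z$; so $d\neq 0$ and $d\mid 2$, forcing $d\in\{1,2\}$.

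For the biconditional, the direction $\positionerwwbb\in\CC\Rightarrow d(\CC)=1$ is immediate: reading $\positionerwwbb$ as the word $abcb^{-1}$ and putting it in nest decomposed form with $p_1=a$, the leading white singleton, and $p_2=bcb^{-1}$, we get $p_1\otimes p_2\in NDF^{\CC}(\cutpaarpartwb)$ with $c(p_1)=1$; hence $1\in K^{\CC}(\cutpaarpartwb)$, and since $d\in\{1,2\}$ this gives $d(\CC)=1$.

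The converse $d(\CC)=1\Rightarrow\positionerwwbb\in\CC$ is the real content, and the idea is first to manufacture a singleton. From $d(\CC)=1$ we obtain a partition $p=p_1\otimes p_2\in NDF^{\CC}(\cutpaarpartwb)$ with $c(p_1)=1$, so $p_1$ has exactly one more white point than black points; say it has $b$ black points. Since $\CC$ is closed under permutation of colors (Lemma \ref{LemPartRole}(a), available because $\paarpartww\otimes\paarpartbb\in\CC$), I would recolor $p_1$ so that, reading from the left, its $b$ black points together with $b$ of its white points form $b$ neighbouring inverse-colored pairs followed by a single unpaired white point; erasing these $b$ pairs one after another (Lemma \ref{RemErasePoints}(b)) collapses $p_1$ to that single white point. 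Because no block of a tensor product joins its two factors, this surviving point is a block of length one in a partition of $\CC$, and Lemma \ref{LemCases}(a) then yields $\singletonw\otimes\singletonb\in\CC$. With singletons available and still $1\in K^{\CC}(\cutpaarpartwb)$, Lemma \ref{LemBsInC}(a) applied with $s=1$ gives $\positioners\in\CC$, and $\positioners$ for $s=1$ coincides with $\positionerwwbb$. I expect the delicate point to be precisely this converse: one must be sure the color permutations and the erasures isolate a genuine length-one block, rather than a longer block that merely happens to lie in the left tensor factor, and this is where the "no block crosses a tensor product" observation and the hypothesis $\paarpartww\otimes\paarpartbb\in\CC$ do the work. (In particular, if $\singletonw\otimes\singletonb\notin\CC$ the same argument shows $d(\CC)=1$ cannot occur, while $\positionerwwbb\notin\CC$ by Lemma \ref{LemCases}(a), so the biconditional holds with both sides false; and $d(\CC)=2$ exactly when $\positionerwwbb\notin\CC$, which finishes the proof.)
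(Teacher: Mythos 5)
Your proof is correct and follows essentially the same route as the paper: both reduce the displayed identity to Lemma \ref{Lem213NEU}(b), both obtain $d\mid 2$ from the element $2$ supplied by $\paarpartww\otimes\paarpartbb$ (the paper via a nested version of it, you via Example \ref{ExNDF}(a)), and both prove the nontrivial converse by collapsing $p_1$ to a single white point using color permutation and erasure, invoking Lemma \ref{LemCases}(a) to get $\singletonw\otimes\singletonb\in\CC$ and then Lemma \ref{LemBsInC}(a) with $s=1$. The extra care you take that the surviving point really is a length-one block is exactly the point the paper leaves implicit.
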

\begin{proof}
We have $\paarpartww\otimes\nestpaarpartbwbb\in\CC$ from which we infer $\paarpartww\otimes\nestpaarpartwbbb\in\CC$ by permutation of colors. Thus $2\in K^{\CC}(\cutpaarpartwb)$ and hence $d(\CC)\in\{1,2\}$. If $\positionerwwbb\in\CC$, then $d(\CC)=1$ by definition. Conversely, let $d(\CC)=1$. We thus find a partition $\singletonw\otimes p_2$ in $\CC$. By Lemma \ref{RemErasePoints}, we deduce $\singletonw\otimes\singletonb\in\CC$, which by Lemma \ref{LemBsInC} implies $\positionerwwbb\in\CC$.

The second assertion follows from Lemma \ref{Lem213NEU}(b).
\end{proof}

\begin{lem}\label{LemKD}
Let $\CC\subset NC^{\twocol}$ and put $d=d(\CC)$, $k=k(\CC)$.
\begin{itemize}
\item[(a)] Let $\singletonw\otimes\singletonb\in\CC$. Then $\singletonw^{\otimes d}\otimes p_2\in\CC$ for some partition $p_2$ and hence also $\singletonw^{\otimes d}\otimes\singletonb^{\otimes d}\in\CC$. Moreover, $\singletonw^{\otimes k}\in\CC$ and $\positionerd\in\CC$.
\item[(b)] Let $\vierpartwwbb\in\CC$. Then $b_d\otimes p_2\in\CC$ for some partition $p_2$ and hence also $b_d\otimes \tilde b_d\in\CC$. moreover $b_k\in\CC$.
\item[(c)] If $d(\CC)\neq 0$ or $k(\CC)\neq 0$ or $K^{\CC}(\cutpaarpartbb)\neq\emptyset$ then one of the following partitions is in $\CC$: $\vierpartwwbb$ or $\paarpartww\otimes\paarpartbb$ or $\singletonw\otimes\singletonb$.
\end{itemize}
\end{lem}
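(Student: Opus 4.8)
The plan is to prove the contrapositive: assuming none of $\vierpartwwbb$, $\paarpartww\otimes\paarpartbb$, $\singletonw\otimes\singletonb$ lies in $\CC$, I will show $d(\CC)=0$, $k(\CC)=0$ and $K^{\CC}(\cutpaarpartbb)=\emptyset$. First note that $\singletonw\otimes\singletonb\notin\CC$ puts us in case $\OOO$ or $\HHH$ (in the terminology of Definition \ref{DefCases}), but since also $\vierpartwwbb\notin\CC$ we are effectively reasoning about a category whose partitions have all blocks of length exactly two by Lemma \ref{LemCases}(a) together with the observation that a block of size $\geq 3$ would, via the argument of Lemma \ref{LemCases}(b), force $\vierpartwbwb\in\CC$ and then $\vierpartwwbb$ or $\paarpartww\otimes\paarpartbb$ by Lemma \ref{LemBsInC}(b) applied to the two neighbouring equally-colored points appearing in that block --- wait, I should be careful here; the cleanest route is to invoke Lemma \ref{LemBsInC} directly rather than re-run the block-size analysis.

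Concretely, I would argue as follows. For $k(\CC)$: if $k(\CC)\neq 0$ there is $p\in\CC$ with $c(p)>0$; rotating $p$ to have no upper points and writing $p=p\otimes\emptyset$ trivially, or more to the point considering any $p\in\CC$ with $c(p)\neq 0$, Lemma \ref{LemBsInC}(d) (with the partition $p$ itself in the role of $p_1\otimes p_2$, taking $p_2$ to be a single pair block at the right end so that $c(p_1)=c(p)\neq 0$, which is possible after erasing/rotating since $c$ is rotation-invariant by Lemma \ref{LemC}(d)) yields $\vierpartwwbb\in\CC$ or $\paarpartww\otimes\paarpartbb\in\CC$, a contradiction. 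Hence $k(\CC)=0$. For $K^{\CC}(\cutpaarpartbb)$: if it were nonempty, i.e. $NDF^{\CC}(\cutpaarpartbb)\neq\emptyset$, then since $\singletonw\otimes\singletonb\notin\CC$ by assumption, Lemma \ref{LemBsInC}(c) directly gives $\vierpartwwbb\in\CC$ or $\paarpartww\otimes\paarpartbb\in\CC$, again a contradiction; so $K^{\CC}(\cutpaarpartbb)=\emptyset$. Finally for $d(\CC)$: suppose $d=d(\CC)\neq 0$, so $K^{\CC}(\cutpaarpartwb)=d\Z$ contains some $t>0$, meaning there is $p=p_1\otimes p_2\in NDF^{\CC}(\cutpaarpartwb)$ with $c(p_1)=t>0$. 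Here $p_2$ has a white point and a black point at its ends. I would erase all black points in $p_1$ using the bicolored pair partitions (Remark \ref{RemComp}, Lemma \ref{RemErasePoints}(b)), leaving $p_1'$ all white with $p_1'\otimes p_2\in\CC$; since $\singletonw\otimes\singletonb\notin\CC$, every block of $p_1'$ has size $\geq 2$ by Lemma \ref{LemCases}(a), so (being noncrossing) $p_1'$ has two neighbouring points of the same color in the same block, and Lemma \ref{LemBsInC}(b) gives $\vierpartwwbb\in\CC$ or $\paarpartww\otimes\paarpartbb\in\CC$ --- contradiction. Hence $d(\CC)=0$, completing the proof.

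The only mildly delicate point is making sure that when I apply Lemma \ref{LemBsInC}(d) for the $k(\CC)$ case I genuinely have a partition of the shape $p_1\otimes p_2$ with $c(p_1)\neq 0$; the trick is that given any $p\in\CC$ with $c(p)\neq 0$, I can rotate all its points onto one line (Lemma \ref{PropCategOperations}(a), and $c$ is unchanged by Lemma \ref{LemC}(d)), then tensor on the right with $\paarpartwb$ (which is always in $\CC$ and has $c=0$), setting $p_1$ equal to the whole of the rotated $p$ and $p_2=\paarpartwb$; then $p=p_1\otimes p_2$ is in nest decomposed form with $c(p_1)=c(p)\neq 0$, so Lemma \ref{LemBsInC}(d) applies. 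Everything else is a direct citation of Lemma \ref{LemBsInC}, so I expect no real obstacle beyond bookkeeping; the main conceptual content has already been packaged into Lemma \ref{LemBsInC}.
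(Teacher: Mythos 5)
Your argument for part (c) is correct and is essentially the paper's own proof run in the contrapositive: the paper likewise reduces everything to Lemma \ref{LemBsInC} (finding a tensor decomposition $p_1\otimes p_2\in\CC$ with $c(p_1)\neq 0$ when $d(\CC)\neq 0$ or $k(\CC)\neq 0$, and handling a possible singleton block in $p_1$ by concluding $\singletonw\otimes\singletonb\in\CC$, which your standing assumption rules out), the only cosmetic difference being that for $K^{\CC}(\cutpaarpartbb)\neq\emptyset$ the paper cites Lemma \ref{Lem213NEU} where you cite Lemma \ref{LemBsInC}(c) directly --- both work. Note only that your proposal addresses part (c) alone; parts (a) and (b) of the statement are not touched.
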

\begin{proof}
(a) Since $d\in K^{\CC}(\cutpaarpartwb)$, there is a partition $p_1\otimes p_2$ in nest decomposed form such that $c(p_1)=d$. By Lemma \ref{LemPartRole}(b), we may assume that $p_1$ consists only of singletons, and using $\baarpartwb$ and $\baarpartbw$ (Remark \ref{RemComp}), we may assume that they are all white; hence $p_1=\singletonw^{\otimes d}$. By Lemma \ref{PropCategOperations}(c), we conclude $\singletonw^{\otimes d}\otimes\singletonb^{\otimes d}\in\CC$. As for proving $\singletonw^{\otimes k}\in\CC$, we proceed like above with $p_2=\emptyset$. Finally $\positionerd\in\CC$ follows from Lemma \ref{LemBsInC}.

(b) We consider $p_1\otimes p_2\in\CC$ with $c(p_1)=d$ as in (a), but by Lemma \ref{LemPartRole}(c) we may assume that $p_1$ consists only of one block. The rest of the proof is similar to (a).

(c) If $d(\CC)\neq 0$ or $k(\CC)\neq 0$, we find a partition $p_1\otimes p_2\in\CC$ with $c(p_1)\neq 0$. Using $\baarpartwb$ and $\baarpartbw$ (Remark \ref{RemComp}), we may assume that all points in $p_1$ are of the same color. If  $p_1$ contains a block of length one, then $\singletonw\otimes\singletonb\in \CC$ by Lemma \ref{LemCases}. Otherwise, we have $\vierpartwwbb\in\CC$ or $\paarpartww\otimes\paarpartbb\in\CC$ by Lemma \ref{LemBsInC}(b). 

Furthermore, if $K^{\CC}(\cutpaarpartbb)\neq\emptyset$, then   $\paarpartww\otimes\paarpartbb$,  $\vierpartwwbb$ or $\positionertpluseins$  is in $\CC$, for some $t\in\N_0$, by Lemma \ref{Lem213NEU}. Clearly, $\positionertpluseins\in\CC$ implies $\singletonw\otimes\singletonb\in\CC$ by Lemma \ref{PropCategOperations}.
\end{proof}

\subsection{Summary of the strategy for the classification}\label{SectStrategy}

We now have all tools at hand for the classification of categories $\CC\subset NC^{\twocol}$ of noncrossing partitions. 
The general strategy is as follows. 
\begin{itemize}
 \item We study the cases $\OOO, \HHH, \SSS$ and $\BBB$ (see Definition \ref{DefCases}) step by step subdividing them again into  the local and the global colorization (see Definition \ref{DefGlobalColor}) respectively. 
 \item In each of these cases we first \textbf{determine} all possible global and local \textbf{parameters} $k(\CC)$, $d(\CC)$ and $K^\CC(\cutpaarpartbb)$.
 \item We then \textbf{find characteristic sample partitions} which somehow represent these parameters.
 \item Next, we isolate sets of partitions $M$ depending on the possible values of the parameters and we prove $M\subset\langle p_1,\ldots,p_n\rangle$, where $p_1,\ldots,p_n$ are the sample partitions.
 \item Finally, we prove that \textbf{these are all possible categories} in the considered case. To do so, if $\CC$ has parameters $k$, $d$ and $K^{\CC}(\cutpaarpartbb)$, we have $\langle p_1,\ldots, p_n\rangle\subset \CC$. On the other hand, $\CC\subset M$, which proves $\CC=\langle p_1,\ldots, p_n\rangle=M$.
\end{itemize}

\section{Case $\OOO$}\label{SectCaseO}

Let us first consider the case $\OOO$, i.e. the case of categories $\CC\subset NC^{\twocol}$ of noncrossing partitions such that $\singletonw\otimes\singletonb\notin\CC$ and $\vierpartwbwb\notin\CC$. By Lemma \ref{LemCases}, $\CC$ is a subset of the set $NC_2^{\twocol}$ of all noncrossing pair partitions.

\subsection{Determining the parameters} 

\begin{prop}\label{ParametersO}
 Let $\CC\subset NC^{\twocol}$ be a category of noncrossing partitions in case $\OOO$.
\begin{itemize}
 \item[(a)] If $\CC$ is globally colorized, then $d(\CC)=2$ and $k(\CC)\in 2\N_0$.
 \item[(b)] If $\CC$ is locally colorized, then $d(\CC)=k(\CC)=0$ and we have:
 \[K^\CC(\cutpaarpartwb)=K^\CC(\cutpaarpartbw)=\{0\},\qquad K^\CC(\cutpaarpartww)=K^\CC(\cutpaarpartbb)=\emptyset\]
\end{itemize}
\end{prop}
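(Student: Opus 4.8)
The plan is to reduce everything to the structural lemmas established above, exploiting the two defining constraints of case $\OOO$. Since $\singletonw\otimes\singletonb\notin\CC$, no partition in $\CC$ has a block of length one (Lemma \ref{LemCases}(a)); since $\vierpartwbwb\notin\CC$, no partition in $\CC$ has a block of length $\geq 3$ (Lemma \ref{LemCases}(b)). Hence every block of every partition in $\CC$ has length exactly two, so $\CC\subset NC^{\twocol}_2$, and I will use these two observations throughout.

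For part (a) I would invoke Lemma \ref{LemRDGlobal}: in the globally colorized case $d(\CC)\in\{1,2\}$, with $d(\CC)=1$ precisely when $\positionerwwbb\in\CC$. But $\positionerwwbb$ contains a block of length one, so $\positionerwwbb\in\CC$ would force $\singletonw\otimes\singletonb\in\CC$, contradicting case $\OOO$; therefore $d(\CC)=2$. The claim $k(\CC)\in 2\N_0$ is then immediate: if $k(\CC)=0$ there is nothing to show, and if $k(\CC)\neq 0$ then $d(\CC)$ divides $k(\CC)$ by Proposition \ref{LemDDivisor}, so $k(\CC)$ is even.

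For part (b) the key preliminary step is to record that, in the locally colorized case, none of the partitions $\paarpartww\otimes\paarpartbb$, $\vierpartwwbb$, or $\positionertpluseins$ (for any $t\in\N_0$) lies in $\CC$: the first by the definition of locally colorized, the second because it has a block of length four, and the third because it contains a block of length one. I would then prove $k(\CC)=0$ and $d(\CC)=0$ by contradiction, in both cases producing a partition $p_1\otimes p_2\in\CC$ with $c(p_1)\neq 0$ and applying Lemma \ref{LemBsInC}(d) to obtain $\vierpartwwbb\in\CC$ or $\paarpartww\otimes\paarpartbb\in\CC$ --- both impossible. Concretely: if $k(\CC)\neq 0$, take $p\in\CC$ with $c(p)>0$ (replace $p$ by $\tilde p$ if needed, using Lemma \ref{LemC}(e)) and read it as $p\otimes\emptyset$; if $d(\CC)\neq 0$, then $d(\CC)\in K^{\CC}(\cutpaarpartwb)$ furnishes a partition $p_1\otimes p_2\in NDF^{\CC}(\cutpaarpartwb)$ with $c(p_1)=d(\CC)\neq 0$. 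Once $d(\CC)=0$ is known, Lemma \ref{Lem213NEU}(d) --- applicable since the three partitions above are absent from $\CC$ --- yields exactly $K^{\CC}(\cutpaarpartwb)=K^{\CC}(\cutpaarpartbw)=d(\CC)\Z=\{0\}$ and $K^{\CC}(\cutpaarpartbb)=K^{\CC}(\cutpaarpartww)=\emptyset$.

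I do not expect a genuine obstacle: the real work was done in the earlier sections, and what remains is essentially bookkeeping --- deciding which of the ``forbidden'' partitions the case $\OOO$ hypotheses exclude, and then quoting Lemmas \ref{LemBsInC}, \ref{Lem213NEU}, \ref{LemRDGlobal} and Proposition \ref{LemDDivisor}. The only mildly delicate point is applying Lemma \ref{LemBsInC}(d) with empty second tensor factor; to sidestep it one can instead rerun the argument inside the proof of Lemma \ref{LemBsInC}(c) --- erase all black points of the chosen witness to obtain a nonempty all-white noncrossing partition in $\CC$, locate two neighbouring points of a common block (possible since every block has length $\geq 2$), and conclude via Lemma \ref{LemBsInC}(b).
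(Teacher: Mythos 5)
Your proof is correct and follows essentially the same route as the paper: part (a) is Lemma \ref{LemRDGlobal} (with $d(\CC)=1$ excluded because $\positionerwwbb$ would force $\singletonw\otimes\singletonb\in\CC$) plus Proposition \ref{LemDDivisor}, and part (b) is Lemma \ref{LemBsInC}(d) to kill $k(\CC)$ and $d(\CC)$ followed by Lemma \ref{Lem213NEU}(d) for the local parameters. The extra bookkeeping you supply (checking which forbidden partitions are excluded by the case-$\OOO$ hypotheses, and the remark about the empty second tensor factor) is exactly the detail the paper leaves implicit.
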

\begin{proof}
(a) By Lemma \ref{LemRDGlobal}, $d(\CC)=2$ and hence $k(\CC)\in 2\N_0$, using Proposition \ref{LemDDivisor}.

(b) By Lemma \ref{LemBsInC}(d), we have $d(\CC)=k(\CC)=0$. Lemma \ref{Lem213NEU}(d) completes the proof.
\end{proof}

\subsection{Finding partitions realizing the parameters}\label{SubsectRealPartOOO}

\begin{lem}\label{ParametersCaseO}
 Let $\CC\subset NC^{\twocol}$ be a globally colorized category of noncrossing partitions in case $\OOO$ such that $k=k(\CC)\neq 0$. Then $\paarpartww^{\otimes \frac{k}{2}}\in\CC$.
\end{lem}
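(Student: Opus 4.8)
The plan is to produce, from the defining property of $k=k(\CC)$, a concrete all-white noncrossing pair partition on $k$ points lying in $\CC$, and then to ``straighten it out'' into $\paarpartww^{\otimes k/2}$, using that $\CC$ is globally colorized.

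First, by Proposition \ref{ParametersO}(a) we have $k\in 2\N_0$, so $k/2\in\N$. I would pick $p\in\CC$ with $c(p)=k$, which exists by definition of $k(\CC)$, and rotate it to have no upper points (Lemma \ref{PropCategOperations}(a); the value $c$ is unchanged by Lemma \ref{LemC}(d)). Since we are in case $\OOO$, Lemma \ref{LemCases} gives $\CC\subset NC^{\twocol}_2$, so $p$ is a noncrossing pair partition on $2m$ points for some $m$, with $m+\tfrac k2$ white and $m-\tfrac k2$ black points. As long as $p$ has two neighbouring points of inverse colors I would erase them via Lemma \ref{RemErasePoints}(b); each such step keeps $p$ a noncrossing pair partition in $\CC$ and leaves $c$ unchanged (an erasure is a composition with $\baarpartwb$ or $\baarpartbw$, cf.\ Remark \ref{RemComp} and Lemma \ref{LemC}(b)). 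This process stops precisely when no two neighbours have inverse colors, i.e.\ when all points have the same color; as $c(p)=k>0$ this color must be white. So I end up with an all-white noncrossing pair partition $q\in\CC$ on $k$ points.

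It remains to deduce $\paarpartww^{\otimes k/2}\in\CC$ from $q\in\CC$. For an all-white pair partition $c$ equals the number of points, so one can neither delete points nor merge blocks while staying in case $\OOO$ with $c=k$; the partition $q$ must instead be \emph{restructured} into $\paarpartww^{\otimes k/2}$ keeping $k$ white points and $k/2$ blocks. Here I would use that $\CC$ is globally colorized: $\paarpartww\otimes\paarpartbb\in\CC$ and $\CC$ is closed under permutation of colors (Lemma \ref{LemPartRole}(a)). If $q\ne\paarpartww^{\otimes k/2}$, choose a block $\{v,w\}$ of $q$ with $w-v\ge 3$ and $w-v$ minimal; minimality together with noncrossingness forces the points strictly between positions $v$ and $w$ to be paired to their neighbours, so in particular $\{v+1,v+2\}$ is a block. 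Now form $q\otimes\paarpartww\otimes\paarpartbb\in\CC$, recolor it (Lemma \ref{LemPartRole}(a)) so that its only black points are the one in position $v+1$ and the very last one, erase the neighbouring inverse-colored pair at positions $v+1,v+2$ (Lemma \ref{RemErasePoints}(b); this deletes the block $\{v+1,v+2\}$), and then erase the neighbouring inverse-colored pair left over from the $\paarpartbb$. The outcome is again an all-white noncrossing pair partition on $k$ points in $\CC$, in which $\{v,w\}$ has shrunk to $\{v,w-2\}$ while a fresh $\paarpartww$ has appeared at the right end; hence the quantity $\sum_{\{a,b\}}(b-a-1)$, summed over all blocks, has dropped by $2$. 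Iterating this move, that quantity reaches $0$, which means the partition equals $\paarpartww^{\otimes k/2}$.

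I expect the restructuring step of the last paragraph to be the only genuine obstacle: because $\paarpartww$ itself need not lie in $\CC$ when $k>2$, one cannot simply ``break off'' pairs, and one must instead borrow a single copy of $\paarpartww\otimes\paarpartbb$, spend its two black points through a color permutation to convert a nested pair (and the spare black pair) into erasable neighbours, and then verify the bookkeeping — that the result is still all white on $k$ points and that the total nesting strictly decreases. Everything else, namely the rotation to one line, the reduction to an all-white partition, and the termination of both iterations, is a routine application of the category operations together with a counting argument.
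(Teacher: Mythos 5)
Your argument is correct, but it reaches the goal by a genuinely different route than the paper. The paper's proof never touches the internal structure of $p$: it forms $\paarpartwb^{\otimes k/2}\otimes p\in\CC$, applies one global color permutation (Lemma \ref{LemPartRole}(a)) to dump all $k$ surplus white points onto the $k/2$ freshly adjoined pairs, obtaining $\paarpartww^{\otimes k/2}\otimes p'$ with $c(p')=0$, and then erases $p'$ with $\baarpartwb$, $\baarpartbw$. You instead first collapse $p$ itself to an all-white noncrossing pair partition $q$ on $k$ points by iterated erasure of inverse-colored neighbours, and then run an unnesting induction on $\sum(b-a-1)$, at each step borrowing one copy of $\paarpartww\otimes\paarpartbb$, recoloring to make the innermost nested pair and the spare black pair erasable, and erasing. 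Both proofs rest on exactly the same two tools (color permutation in globally colorized categories and erasure of inverse-colored neighbours), so neither is more general, but the trade-off is real: the paper's version is shorter because it creates the target blocks ex nihilo rather than restructuring existing ones, while yours is more explicit about the bookkeeping that the paper compresses into the phrase ``using the pair partitions to erase $p'$'' (which itself tacitly requires a further recoloring of $p'$ to an alternating pattern before the erasures can all be carried out). Two small imprecisions in your write-up, neither of which is a gap: the quantity $\sum(b-a-1)$ drops by $2(1+n)$ where $n$ is the number of blocks nesting $\{v,w\}$, not always by exactly $2$ (strict decrease is all you need); and the termination of your first reduction should be phrased as ``the point count is a decreasing non-negative integer'' rather than left implicit.
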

\begin{proof}
We find a partition $p\in\CC$ having no upper points such that $c(p)=k$, i.e. there are $x+k$ white points and $x$ black points in $p$. Now, the partition $\paarpartwb^{\otimes \frac{k}{2}}\otimes p$ is in $\CC$. By Lemma \ref{LemPartRole}, we may permute the colors of this partition, and we infer that $\paarpartww^{\otimes \frac{k}{2}}\otimes p'$ is in $\CC$ for some partition $p'$ with $c(p')=0$. Using the pair partitions $\paarpartwb$ and $\paarpartbw$ to erase $p'$, we infer that $\paarpartww^{\otimes \frac{k}{2}}\in\CC$.
\end{proof}

\subsection{Description of natural categories in case $\OOO$}
The preceding subsection shows that there are natural categories $\langle\paarpartww^{\otimes \frac{k}{2}},\paarpartww\otimes\paarpartbb\rangle$ in case $\OOO$. Let us describe these categories and also the ``empty category'' $\langle\emptyset\rangle=\langle\paarpartwb,\paarpartbw\rangle$. (Recall that the partitions $\paarpartwb$ and $\paarpartbw$ are always contained in a category and that we omit to write them down explicitely as generators.)

\begin{prop}\label{DescriptionCaseOGlobal}
We have the following natural categories of partitions in case $\OOO$.
\begin{itemize}
 \item[(a)] The category $\categg{\OOO}{\loc}:=\langle\emptyset\rangle$ consists of all noncrossing pair partitions such that each block connects a white point with a black point, when the partition is rotated such that it has no upper points.
 \item[(b)] Let $k\in2\N_0$. Then $\categ{\OOO}{\glob}{k}:=\langle\paarpartww^{\otimes \frac{k}{2}},\paarpartww\otimes\paarpartbb\rangle$ coincides with $\{p\in NC_2^{\twocol}\;|\; c(p)\in k\Z\}$.
Here, $\paarpartww^{\otimes \frac{k}{2}}=\emptyset$ if $k=0$. 
\end{itemize}
In particular, all these categories are pairwise different.
\end{prop}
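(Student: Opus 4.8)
The plan is, for each of (a) and (b), to name the set $M$ on the right-hand side, verify that $M$ is a category containing the prescribed generators (hence $\langle\ldots\rangle\subseteq M$), and then prove the reverse inclusion $M\subseteq\langle\ldots\rangle$ by an induction on the number of points; finally we separate the categories by the invariants already available ($k(\CC)$ and global vs.\ local colorization). For (a), write $M_{\loc}$ for the set of noncrossing pair partitions all of whose blocks are bicolored once the partition is rotated to a single lower line. That $M_{\loc}$ contains $\paarpartwb,\paarpartbw,\idpartww,\idpartbb$ and is closed under tensor products and involution is immediate (rotating $\idpartww$ or $\idpartbb$ to one line gives a bicolored pair, and the one-line form is unchanged by $R_h$). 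Closure under composition is the one point needing work: a block of a composite $pq$ is an alternating path of arcs from $q$ and from $p$ through the identified middle points; one checks such a path has exactly two endpoints among the points of $pq$ (it is a path, not a cycle), and tracking the two sources of color inversion — a bicolored arc flips the color, and the upper/lower convention flips it once more at each middle point — shows the two endpoints receive inverse colors in the one-line form of $pq$. Hence $\langle\emptyset\rangle\subseteq M_{\loc}$. For the reverse inclusion I would induct on the number of points: a nonempty $p\in M_{\loc}$, rotated to one line, has an innermost block, i.e.\ two neighbouring points forming a block; they have inverse colors, deleting them leaves $p_0\in M_{\loc}$ with fewer points, so $p_0\in\langle\emptyset\rangle$ by induction, and $p$ is recovered from $p_0$ by inserting $\paarpartwb$ or $\paarpartbw$ (always in a category) between two legs of $p_0$ (Lemma \ref{RemErasePoints}(d)), or as a tensor factor if the block is at the boundary. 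This proves (a).

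\textbf{Part (b), the easy inclusion and the reduction to $k=0$.} Set $M_k:=\{p\in NC_2^{\twocol}\mid c(p)\in k\Z\}$. Since $c$ is additive under tensor product and composition and negated by involution (Lemma \ref{LemC}), and $NC_2^{\twocol}$ is a category, $M_k$ is a category; it contains $\paarpartww^{\otimes k/2}$ (with $c$-value $k$) and $\paarpartww\otimes\paarpartbb$ (with $c$-value $0$), so $\categ{\OOO}{\glob}{k}\subseteq M_k$. For the reverse inclusion the key is the case $k=0$, which I would settle first: $\langle\paarpartww\otimes\paarpartbb\rangle$ is closed under permutation of colors (Lemma \ref{LemPartRole}(a)), and for a fixed uncolored noncrossing pair partition $P$ any two colorings with the same number of white points are related by a color permutation; as $c=0$ is equivalent to that number being half the total, it suffices to exhibit, for each $P$, \emph{one} balanced coloring lying in $\langle\paarpartww\otimes\paarpartbb\rangle$. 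The coloring making every block bicolored does this, since by part (a) it lies in $\langle\emptyset\rangle\subseteq\langle\paarpartww\otimes\paarpartbb\rangle$. Thus $M_0=\langle\paarpartww\otimes\paarpartbb\rangle=\categ{\OOO}{\glob}{0}$.

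\textbf{Part (b), the general case.} Let $p\in M_k$. Replacing $p$ by $\tilde p$ if necessary (verticolor reflection preserves the category by Lemma \ref{RemErasePoints}(a) and negates $c$), I may assume $c(p)=km$ with $m\geq0$. Put $q:=\paarpartww^{\otimes km/2}\in\categ{\OOO}{\glob}{k}$; then $c(\tilde q\otimes p)=0$, so $\tilde q\otimes p\in\langle\paarpartww\otimes\paarpartbb\rangle\subseteq\categ{\OOO}{\glob}{k}$ by the $k=0$ case, whence $q\otimes\tilde q\otimes p\in\categ{\OOO}{\glob}{k}$. Now $q\otimes\tilde q=\paarpartww^{\otimes km/2}\otimes\paarpartbb^{\otimes km/2}$, and erasing at the junction the neighbouring white and black points (Lemma \ref{RemErasePoints}(b)) merges the two adjacent monochromatic blocks into a single bicolored block, which can then itself be erased; iterating removes $q\otimes\tilde q$ completely and leaves $p\in\categ{\OOO}{\glob}{k}$. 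Hence $M_k=\categ{\OOO}{\glob}{k}$.

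\textbf{Distinctness and the main obstacle.} Finally, $\categg{\OOO}{\loc}$ has only bicolored blocks, so $\paarpartww\otimes\paarpartbb\notin\categg{\OOO}{\loc}$ and this category is locally colorized, whereas every $\categ{\OOO}{\glob}{k}$ contains $\paarpartww\otimes\paarpartbb$; and the $\categ{\OOO}{\glob}{k}$ are mutually distinct because $k(\categ{\OOO}{\glob}{k})=\min\{m>0\mid m\in k\Z\}=k$ for $k>0$ and $=0$ for $k=0$, i.e.\ they have pairwise different degrees of reflection. The step I expect to be the main obstacle is the composition-closure argument for $M_{\loc}$ in part (a): one must set up the color bookkeeping along the alternating arc-paths through the middle row carefully, keeping the two inversion mechanisms apart and arguing that such a path is a genuine simple path with exactly two endpoints; everything else reduces to routine applications of Lemmas \ref{RemErasePoints}, \ref{LemPartRole} and \ref{LemC}.
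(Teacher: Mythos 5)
Your proof is correct and follows essentially the same route as the paper: part (a) by induction on an innermost bicolored pair together with the observation that the described set is itself a category, and part (b) by reducing to the $c=0$ case via the all-bicolored recoloring, invoking color permutation (Lemma \ref{LemPartRole}(a)), and erasing the auxiliary unicolored pairs with Lemma \ref{RemErasePoints}(b). Your reorganization of (b) (settling $k=0$ first, then padding with $q\otimes\tilde q$) and your more explicit verification that $M_{\loc}$ is closed under composition are only cosmetic refinements of the paper's argument.
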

\begin{proof}
(a) We may construct all partitions $p$ from the assertion using $\paarpartwb$, $\paarpartbw$ and the category operations due to a simple inductive argument: Assume that $p$ has $m+1$ blocks. Since $p$ is noncrossing, it contains at least one block $\paarpartwb$ or $\paarpartbw$ on two consecutive points. Removing it yields a partition which is in $\langle\emptyset\rangle$ by induction hypothesis. Putting it back (Lemma \ref{PropCategOperations}(d)), we infer $p\in\langle\emptyset\rangle$. Conversely, the set of all noncrossing pair partitions with the block rule of the assertion forms a category of partitions, hence containing $\langle\emptyset\rangle$.

(b) Let $p\in NC_2^{\twocol}(0,l)$ be a partition with no upper points such that $c(p)=km\geq 0$, for some $m\in\N_0$. Let $p'\in NC_2^{\twocol}(0,l)$ be the partition obtained from $p$ by replacing each unicolored block $\paarpartww$ or $\paarpartbb$ by $\paarpartwb$. Then, $p'$ is a partition in $\langle\emptyset\rangle\subset\langle\paarpartww^{\otimes \frac{k}{2}},\paarpartww\otimes\paarpartbb\rangle$ by (a) and $c(p')=0$. This implies that $p'\otimes \paarpartww^{\otimes \frac{km}{2}}$ is in $\langle\paarpartww^{\otimes \frac{k}{2}},\paarpartww\otimes\paarpartbb\rangle$, too, with $c(p'\otimes \paarpartww^{\otimes \frac{km}{2}})=c(p)$ by Lemma \ref{LemC}. Hence, permutation of colors yields that $p\otimes \paarpartwb^{\otimes \frac{km}{2}}$ is in  $\langle\paarpartww^{\otimes \frac{k}{2}},\paarpartww\otimes\paarpartbb\rangle$. Using Lemma \ref{PropCategOperations}(b), we infer  $p\in\langle\paarpartww^{\otimes \frac{k}{2}},\paarpartww\otimes\paarpartbb\rangle$. 

Conversely, the set $\{p\in NC_2^{\twocol}\;|\; c(p)\in k\Z\}$ is a category of partitions due to Lemma \ref{LemC} containing $\paarpartww^{\otimes \frac{k}{2}}$ and $\paarpartww\otimes\paarpartbb$.  
\end{proof}

\subsection{Classification in the case $\OOO$}
We are now ready to prove our first classification theorem.

\begin{thm}\label{ThmCaseO}
Let $\CC\subset NC^{\twocol}$ be a category of noncrossing partitions in case $\OOO$. Then $\CC$ coincides with one of the following categories.
\begin{itemize}
\item[(a)] If $\CC$ is globally colorized, then $\CC= \categ{\OOO}{\glob}{k}=\langle\paarpartww^{\otimes \frac{k}{2}},\paarpartww\otimes\paarpartbb\rangle$ for $k=k(\CC)\in 2\N_0$.
\item[(b)] If $\CC$ is locally colorized, then $\CC=\categg{\OOO}{\loc}=\langle\emptyset\rangle$.
\end{itemize}
\end{thm}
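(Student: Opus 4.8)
The plan is to combine the parameter computations of Proposition~\ref{ParametersO} with the descriptions of the natural categories from Proposition~\ref{DescriptionCaseOGlobal}, following the general strategy of Section~\ref{SectStrategy}. The two cases are handled separately, and in each one we show two inclusions.

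\textbf{Case (b), locally colorized.} By Proposition~\ref{ParametersO}(b), such a $\CC$ has $k(\CC)=d(\CC)=0$ and all the sets $K^{\CC}(\cutpaarpartwb)$, $K^{\CC}(\cutpaarpartbw)$ are $\{0\}$ while $K^{\CC}(\cutpaarpartww)=K^{\CC}(\cutpaarpartbb)=\emptyset$. First I would note the trivial inclusion $\langle\emptyset\rangle\subset\CC$, since $\paarpartwb,\paarpartbw$ are always in a category. For the reverse inclusion, take any $p\in\CC$. Since we are in case $\OOO$, Lemma~\ref{LemCases} forces $p\in NC_2^{\twocol}$, so $p$ is a noncrossing pair partition. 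Rotate $p$ so that it has no upper points. I claim every block of $p$ connects a white point to a black point; then by Proposition~\ref{DescriptionCaseOGlobal}(a) we get $p\in\langle\emptyset\rangle$ and we are done. Suppose some block connects two points of the same color. Since $p$ is noncrossing, walking inward we may find a same-colored block $\paarpartww$ or $\paarpartbb$ appearing on two \emph{consecutive} points; more precisely, an innermost same-colored block. That block, together with everything nested strictly inside it, exhibits $p$ (up to rotation) in nest decomposed form $p_1\otimes p_2$ with $p_2$ having matching first/last colors both white or both black, hence $NDF^{\CC}(\cutpaarpartww)\neq\emptyset$ or $NDF^{\CC}(\cutpaarpartbb)\neq\emptyset$, contradicting $K^{\CC}(\cutpaarpartww)=K^{\CC}(\cutpaarpartbb)=\emptyset$. (Alternatively and more directly: if $p$ has a same-colored block, by Lemma~\ref{LemBsInC}(c) we would get $\vierpartwwbb\in\CC$ or $\paarpartww\otimes\paarpartbb\in\CC$; the former is impossible in case $\OOO$, and the latter means $\CC$ is globally colorized, a contradiction.) This settles (b).

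\textbf{Case (a), globally colorized.} By Proposition~\ref{ParametersO}(a), $d(\CC)=2$ and $k:=k(\CC)\in 2\N_0$. For the inclusion $\categ{\OOO}{\glob}{k}\subset\CC$: the generator $\paarpartww\otimes\paarpartbb$ lies in $\CC$ by the definition of globally colorized, and if $k\neq 0$ then $\paarpartww^{\otimes k/2}\in\CC$ by Lemma~\ref{ParametersCaseO}; if $k=0$ this generator is empty. Hence $\langle\paarpartww^{\otimes k/2},\paarpartww\otimes\paarpartbb\rangle\subset\CC$, i.e. $\categ{\OOO}{\glob}{k}\subset\CC$ by Proposition~\ref{DescriptionCaseOGlobal}(b). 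For the reverse inclusion $\CC\subset\categ{\OOO}{\glob}{k}$: again Lemma~\ref{LemCases} gives $\CC\subset NC_2^{\twocol}$, so any $p\in\CC$ is a noncrossing pair partition, and by Proposition~\ref{LemKZ} (with $k=k(\CC)$) we have $c(p)\in k\Z$. Thus $p\in\{q\in NC_2^{\twocol}\mid c(q)\in k\Z\}=\categ{\OOO}{\glob}{k}$ by Proposition~\ref{DescriptionCaseOGlobal}(b). Combining the two inclusions gives $\CC=\categ{\OOO}{\glob}{k}$.

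\textbf{Main obstacle.} Essentially all the real work has already been done in the preparatory lemmas, so the proof is a short assembly. The only point requiring a moment's care is the reverse inclusion in case (b): one must argue that a locally colorized category in case $\OOO$ contains \emph{no} partition with a monochromatic block. I would present this via Lemma~\ref{LemBsInC}(c) as in the parenthetical remark above, since that lemma already packages exactly the needed dichotomy ($\vierpartwwbb\in\CC$, impossible here, or $\paarpartww\otimes\paarpartbb\in\CC$, contradicting local colorization), avoiding any ad hoc combinatorial argument about innermost blocks.
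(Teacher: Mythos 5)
Your proof is correct and takes essentially the same route as the paper: part (a) assembles Proposition \ref{LemKZ}, Lemma \ref{ParametersCaseO} and Proposition \ref{DescriptionCaseOGlobal}(b), and part (b) derives from $NDF^{\CC}(\cutpaarpartww)=NDF^{\CC}(\cutpaarpartbb)=\emptyset$ (Proposition \ref{ParametersO}) that every block is bicolored and then applies Proposition \ref{DescriptionCaseOGlobal}(a). The only cosmetic point is that your parenthetical alternative should invoke Lemma \ref{LemBsInC}(b) (two \emph{neighbouring} same-colored points in a block) rather than (c), but your primary nest-decomposed-form argument does not depend on it.
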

\begin{proof}
(a) By Propositions \ref{LemKZ} and  \ref{DescriptionCaseOGlobal}, we have that $\CC$ is contained in $\langle\paarpartww^{\otimes \frac{k}{2}},\paarpartww\otimes\paarpartbb\rangle$. Conversely, $\paarpartww\otimes\paarpartbb\in\CC$ by Definition \ref{DefGlobalColor} and $\paarpartww^{\otimes \frac{k}{2}}\in\CC$ by Lemma \ref{ParametersCaseO}.

(b) Let $\CC$ be locally colorized and let $p\in\CC$ be a partition with no upper points. Then, each block of $p$ connects a white point to a black point, since $NDF^{\CC}(\cutpaarpartww)=NDF^{\CC}(\cutpaarpartbb)=\emptyset$ by Proposition \ref{ParametersO}. By Proposition \ref{DescriptionCaseOGlobal},  $\CC$ is contained in $\langle\emptyset\rangle$, hence they coincide.
\end{proof}

\begin{rem}
For $k=2$, the category $\langle\paarpartww^{\otimes \frac{k}{2}},\paarpartww\otimes\paarpartbb\rangle$ coincides with the non-colored category of partitions $\langle\paarpart\rangle$ in the sense of Proposition \ref{PropOneColored}.
\end{rem}

\section{Case $\HHH$}\label{SectCaseH}

We now turn to the case $\HHH$, i.e. to categories $\CC\subset NC^{\twocol}$ such that $\singletonw\otimes\singletonb\notin\CC$ but $\vierpartwbwb\in\CC$. By Lemma \ref{LemCases},  no blocks of size one occur in any partition considered in this section. Recall, that due to Lemma \ref{LemPartRole}(d), we may connect neighbouring blocks of partitions in $\CC$, if the blocks meet at two points with inverse colors. 

\subsection{Determining the parameters}

\begin{prop}\label{LemRDCaseH}
Let $\CC\subset NC^{\twocol}$ be a category of noncrossing partitions in case $\HHH$. 
\begin{itemize}
 \item[(a)] If $\CC$ is globally colorized, then $d(\CC)=2$ and $k(\CC)\in 2\N_0$.
 \item[(b)] If $\CC$ is locally colorized, then 
 \begin{itemize}
  \item[(i)] either $k(\CC)=d(\CC)=0$ and we have:
    \[K^\CC(\cutpaarpartwb)=K^\CC(\cutpaarpartbw)=0\qquad\textnormal{and}\qquad K^\CC(\cutpaarpartww)=K^\CC(\cutpaarpartbb)=\emptyset\]
  \item[(ii)] or $k(\CC),d(\CC)\in \N_0\backslash \{1,2\}$ and we have:
  \[K^\CC(\cutpaarpartwb)=K^\CC(\cutpaarpartbw)=K^\CC(\cutpaarpartww)=K^\CC(\cutpaarpartbb)=d\Z\]
 \end{itemize}
  Moreover, we have $K^{\CC}(\cutpaarpartbb)\neq\emptyset$ if and only if $\vierpartwwbb\in\CC$. 
\end{itemize}
\end{prop}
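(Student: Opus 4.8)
The plan is to combine the structural results established so far --- chiefly Lemmas~\ref{LemCases}, \ref{LemBsInC}, \ref{Lem213NEU}, \ref{LemKD} and \ref{LemRDGlobal} together with Proposition~\ref{LemDDivisor} --- so that the statement reduces to bookkeeping. The observation used throughout is that in case $\HHH$ one has $\singletonw\otimes\singletonb\notin\CC$, hence by Lemma~\ref{LemCases}(a) every block of every partition in $\CC$ has size at least two; in particular no partition in $\CC$ has a block of size one, so $\positionerwwbb\notin\CC$, $\positionertpluseins\notin\CC$ for every $t\in\N_0$, and $b_1=\singletonw\notin\CC$. For part (a): if $\CC$ is globally colorized, then Lemma~\ref{LemRDGlobal} gives $d(\CC)\in\{1,2\}$ with $d(\CC)=1$ only if $\positionerwwbb\in\CC$; since the latter fails, $d(\CC)=2$, and $k(\CC)\in 2\N_0$ follows from Proposition~\ref{LemDDivisor}, as $d(\CC)$ divides $k(\CC)$ whenever $k(\CC)\ne 0$.

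For part (b), $\CC$ is locally colorized, i.e.\ $\paarpartww\otimes\paarpartbb\notin\CC$, and I would split according to whether $\vierpartwwbb\in\CC$, first settling the ``moreover'' clause. If $NDF^{\CC}(\cutpaarpartbb)\ne\emptyset$, then Lemma~\ref{LemBsInC}(c) gives $\vierpartwwbb\in\CC$ or $\paarpartww\otimes\paarpartbb\in\CC$, hence $\vierpartwwbb\in\CC$; conversely, if $\vierpartwwbb\in\CC$ then Lemma~\ref{Lem213NEU}(b) yields $K^{\CC}(\cutpaarpartbb)=d\Z\ni 0$. Thus $K^{\CC}(\cutpaarpartbb)\ne\emptyset$ if and only if $\vierpartwwbb\in\CC$. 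In the sub-case $\vierpartwwbb\notin\CC$: were $d(\CC)\ne 0$, a partition realizing this parameter would be of the form $p_1\otimes p_2\in\CC$ with $c(p_1)=d(\CC)\ne 0$, and Lemma~\ref{LemBsInC}(d) would force $\vierpartwwbb\in\CC$ or $\paarpartww\otimes\paarpartbb\in\CC$, both excluded; hence $d(\CC)=0$, so $k(\CC)=0$ by Proposition~\ref{LemDDivisor}, and since none of $\paarpartww\otimes\paarpartbb$, $\vierpartwwbb$, $\positionertpluseins$ ($t\in\N_0$) lies in $\CC$, Lemma~\ref{Lem213NEU}(d) delivers exactly the equalities of (i).

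In the sub-case $\vierpartwwbb\in\CC$: Lemma~\ref{LemKD}(b) provides $b_{d(\CC)}\otimes p_2\in\CC$ for some $p_2$ and $b_{d(\CC)}\otimes\tilde b_{d(\CC)}\in\CC$; since $b_1=\singletonw$ is a size-one block and $b_2\otimes\tilde b_2=\paarpartww\otimes\paarpartbb$, both $d(\CC)=1$ and $d(\CC)=2$ are impossible, so $d(\CC)\in\N_0\setminus\{1,2\}$. If $k(\CC)\ne 0$, then $d(\CC)\ne 0$ and $d(\CC)\mid k(\CC)$ by Proposition~\ref{LemDDivisor}, whence $k(\CC)\geq d(\CC)\geq 3$; hence $k(\CC)\in\N_0\setminus\{1,2\}$ as well. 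Finally Lemma~\ref{Lem213NEU}(b) gives the common value $d\Z$ for all of $K^{\CC}(\cutpaarpartwb)$, $K^{\CC}(\cutpaarpartbw)$, $K^{\CC}(\cutpaarpartww)$ and $K^{\CC}(\cutpaarpartbb)$, which is (ii).

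I do not expect a genuine obstacle: the difficult combinatorics is already packaged into the quoted lemmas, and the task is mainly to organize the case distinction on $\vierpartwwbb\in\CC$. The one point needing care is the locally colorized sub-case with $\vierpartwwbb\in\CC$, where the small values $d(\CC)\in\{1,2\}$ have to be ruled out explicitly --- using that size-one blocks never occur in $\CC$ and that $\paarpartww\otimes\paarpartbb\notin\CC$ --- before the divisibility relation carries the conclusion over to $k(\CC)$.
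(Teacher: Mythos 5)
Your proof is correct and follows essentially the same route as the paper's (very terse) argument: Lemma \ref{LemRDGlobal} with Proposition \ref{LemDDivisor} for (a), the dichotomy on $\vierpartwwbb\in\CC$ combined with Lemmas \ref{LemBsInC}(c)--(d) and \ref{Lem213NEU}(b),(d) for (b), and Lemma \ref{LemKD}(b) to exclude $d(\CC)\in\{1,2\}$. The only cosmetic difference is that you rule out $k(\CC)\in\{1,2\}$ via divisibility by $d(\CC)\geq 3$ rather than invoking $b_k\in\CC$ directly, which is equally valid.
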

\begin{proof}
(a) Use Lemma \ref{Lem213NEU} and Proposition \ref{LemDDivisor}.

(b) If $\vierpartwwbb\notin\CC$, use Lemma \ref{LemBsInC}(d) and Lemma \ref{Lem213NEU}(d) to prove (i). Otherwise, Lemma \ref{Lem213NEU}(b) yields (ii) in combination with Lemma \ref{LemKD}(b) in order to exclude $k(\CC),d(\CC)\in \{1,2\}$. 
\end{proof}

\subsection{Finding partitions realizing the parameters}

\begin{lem}\label{LemSamplePartitionsH}
Let $\CC\subset NC^{\twocol}$ be a category of noncrossing partitions in case $\HHH$. 
\begin{itemize}
 \item[(a)] If $k=k(\CC)\neq 0$, then $b_k\in\CC$.
 \item[(b)] If $d=d(\CC)\neq 0$, then $b_d\otimes \tilde{b_d}\in\CC$.
\end{itemize}
\end{lem}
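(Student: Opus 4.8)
The plan is to reduce both parts to Lemma~\ref{LemKD}(b), whose only hypothesis is the membership $\vierpartwwbb\in\CC$; so the essential point is to produce $\vierpartwwbb$ inside $\CC$ under either assumption.

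First I would show that $\vierpartwwbb\in\CC$ whenever $k(\CC)\neq 0$ (for part~(a)) or $d(\CC)\neq 0$ (for part~(b)). By Lemma~\ref{LemKD}(c), either hypothesis forces at least one of the three partitions $\vierpartwwbb$, $\paarpartww\otimes\paarpartbb$, $\singletonw\otimes\singletonb$ to lie in $\CC$. Since $\CC$ is in case $\HHH$, we have $\singletonw\otimes\singletonb\notin\CC$ by Definition~\ref{DefCases}, so this possibility is excluded. If instead $\paarpartww\otimes\paarpartbb\in\CC$, then $\CC$ is closed under permutation of colors by Lemma~\ref{LemPartRole}(a); since $\CC$ is in case $\HHH$ we also have $\vierpartwbwb\in\CC$, and applying to it the permutation of colors that turns the colouring $\circ\bullet\circ\bullet$ into $\circ\circ\bullet\bullet$ (the block structure, a single four-point block, is unchanged) yields $\vierpartwwbb\in\CC$. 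Hence in every case $\vierpartwwbb\in\CC$.

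Once $\vierpartwwbb\in\CC$ is established, parts~(a) and~(b) follow immediately from Lemma~\ref{LemKD}(b), which states precisely that $\vierpartwwbb\in\CC$ implies $b_{k}\in\CC$ for $k=k(\CC)$ and $b_{d}\otimes\tilde b_{d}\in\CC$ for $d=d(\CC)$. I do not foresee any real obstacle, since all the combinatorial substance has already been packaged into the earlier lemmas; the only point worth isolating is the small observation that, in case $\HHH$, a category containing $\paarpartww\otimes\paarpartbb$ automatically contains $\vierpartwwbb$ by recolouring $\vierpartwbwb$.
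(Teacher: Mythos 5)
Your proof is correct and follows essentially the same route as the paper: both arguments reduce to establishing $\vierpartwwbb\in\CC$ and then invoking Lemma~\ref{LemKD}(b). The paper simply cites Proposition~\ref{LemRDCaseH} for the membership $\vierpartwwbb\in\CC$, whereas you derive it directly from Lemma~\ref{LemKD}(c) together with the recolouring of $\vierpartwbwb$ via Lemma~\ref{LemPartRole}(a); this is the same underlying mechanism, spelled out slightly more explicitly.
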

\begin{proof}
If $k\neq 0$ or $d\neq 0$, we have $\vierpartwwbb\in\CC$ by Proposition \ref{LemRDCaseH}, which by Lemma \ref{LemKD}(b) yields the result.
\end{proof}

\subsection{Description of natural categories}

Motivated by Lemma \ref{LemSamplePartitionsH}, we want to describe the categories $\langle b_k, b_d\otimes \tilde{b_d},\vierpartwbwb\rangle$. Note that for $k\geq 2$ or $d\geq 2$, we may always construct the partition $\vierpartwwbb$ inside the category (Lemma \ref{LemBsInC}). Due to Proposition \ref{LemRDCaseH}, this is a natural generator indeed, so we add it in the following lemma also for the cases $k=d=0$ and treat the case $\langle\vierpartwbwb\rangle$ separately. 

\begin{prop}\label{LemBaseCategoryH}\label{DescriptionCaseHLocal}
 We have the following natural categories in case $\HHH$.
\begin{itemize}
 \item[(a)] The category $\categg{\HHH'}{\loc}:=\langle\vierpartwbwb\rangle$ consists of all noncrossing partitions such that each block is of even length connecting white and black points in an alternating way, when the partition is rotated such that it has no upper points.
 \item[(b)] Let $k,d\in\N_0\backslash\{1\}$ be such that $d$ is a divisor of $k$, if $k\neq 0$, and denote $b_0:=\emptyset$. Denote by $\categ{\HHH}{\loc}{k,d}$ the set of 
 all partitions $p\in NC^{\twocol}$ such that
 \begin{itemize}
 \item[(i)] all blocks have length at least two,
 \item[(ii)] $c(p)\in k\Z$,
 \item[(iii)] if $p_1\otimes p_2$ is any rotated version of $p$ in nest decomposed form, then $c(p_1)\in d\Z$.
 \end{itemize}
We have $\categ{\HHH}{\loc}{k,d}\subset \langle b_k, b_d\otimes \tilde{b_d},\vierpartwwbb,\vierpartwbwb\rangle$.
\end{itemize}
\end{prop}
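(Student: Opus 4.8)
The plan is to treat (a) and (b) in turn, each time sandwiching the combinatorially described set between the generated category and an explicit super-category of partitions.

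\emph{Part (a).} Let $M$ be the set described in (a). First I would check that $M$ is a category of partitions: it contains $\vierpartwbwb$ and $\paarpartwb,\paarpartbw,\idpartww,\idpartbb$ (each identity rotates to a bicoloured pair), and it is closed under tensor products, rotation and involution because its defining condition is phrased on the one-line (no upper points) rotation. The only genuine point is closure under composition: since every composition is built from tensor products, rotations and cappings with $\baarpartwb,\baarpartbw$ (Remark \ref{RemComp} and Lemma \ref{PropCategOperations}(b)), it is enough to see that erasing two neighbouring points of inverse colours preserves $M$ -- if the two points lie in the same block this merely shortens an even alternating block by two, and if they lie in different blocks the two even alternating blocks fuse, and a short check using noncrossingness shows the fused block is again even with alternating colours. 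This gives $\langle\vierpartwbwb\rangle\subseteq M$. For the converse I would induct on the number of points of $p\in M$ rotated to have no upper points. As there are no singletons in case $\HHH$, two neighbouring points $j,j+1$ lie in a common block $V$ (choose, over all blocks, a pair of consecutive legs with minimal gap; minimality forces the gap to be $0$). If $|V|=2$ then $V\in\{\paarpartwb,\paarpartbw\}$ sits on $\{j,j+1\}$; remove it and reinsert it by Lemma \ref{PropCategOperations}(d). If $|V|\geq 4$, delete the legs $j,j+1$ from $V$; the result $p'$ is again in $M$ with fewer points, so $p'\in\langle\vierpartwbwb\rangle$ by induction, and $p$ is recovered by composing $p'$ (in the sense of Remark \ref{RemComp}) with a suitable rotation of $\vierpartwbwb$ lying in $P^{\twocol}(1,3)$, which re-expands one leg of $V$ into three alternating legs.

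\emph{Part (b).} Write $\mathcal G:=\langle b_k,\,b_d\otimes\tilde b_d,\,\vierpartwwbb,\,\vierpartwbwb\rangle$. Since $\vierpartwbwb\in\mathcal G$, part (a) gives $\categg{\HHH'}{\loc}\subseteq\mathcal G$, so inside $\mathcal G$ all case-$\HHH'$ moves are available, and since $\vierpartwwbb\in\mathcal G$, Lemma \ref{LemPartRole}(c) lets us connect neighbouring blocks and permute colours of neighbouring points of a common block. Given $p\in\categ{\HHH}{\loc}{k,d}$ rotated to have no upper points, I would induct on the number of points. If a bicoloured pair $\paarpartwb$ or $\paarpartbw$ sits on two neighbouring points, peel it off -- conditions (i)--(iii) of the statement are preserved, which for (ii)--(iii) is clear since erasing a bicoloured pair changes no relevant $c$-value -- and reinsert it by Lemma \ref{PropCategOperations}(d). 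Otherwise, by the descent of part (a) together with the absence of peelable bicoloured pairs, one reduces $p$ -- using the connecting and colour-permuting operations, the pair partitions, and the case-$\HHH'$ moves -- to a tensor product of unicoloured blocks $b_{s_i}$ and their verticolor reflections, with the sizes $s_i$ constrained by $k$ and $d$ via conditions (ii), (iii) and the divisibility $d\mid k$. Each such factor must then be assembled from $b_k$ and $b_d\otimes\tilde b_d$; the subtle case is producing a unicoloured block $b_s$ with $d\mid s$ but $s\ne k$, since composition with the indecomposable block $b_k$ only ever merges blocks. Here I would first rotate $b_d\otimes\tilde b_d$ to a $(2,2)$-partition whose two lower points and two upper points form two \emph{disjoint} blocks, tensor copies of it, and compose with $b_k$: the single block $b_k$ then glues together and discards the upper layer, leaving exactly the prescribed unicoloured blocks underneath. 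That the reconstructed partition equals $p$ follows from Propositions \ref{LemKZ} and \ref{LemDDivisor} and the defining conditions.

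\emph{Main obstacle.} The heart of the argument is the generation step in (b): producing the normal-form partitions -- in particular a unicoloured $d$-block, or more generally a prescribed tensor product of unicoloured blocks -- from the four generators when $d<k$. Naive ``splitting'' of $b_k$ is impossible, so the device of rotating $b_d\otimes\tilde b_d$ to a partition with disjoint upper and lower blocks and then composing with $b_k$ (or an equivalent trick) has to be found and organised carefully. A secondary, bookkeeping-type obstacle is verifying that peeling off a bicoloured pair keeps the partition inside $\categ{\HHH}{\loc}{k,d}$, i.e. that condition (iii) is stable under inserting and removing bicoloured pairs in every rotated picture.
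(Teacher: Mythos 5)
Your proposal has genuine gaps in both parts. In part (a), the reconstruction step for $|V|\geq 4$ does not recover $p$. You delete two adjacent legs $j,j+1$ of $V$ and then ``re-expand one leg of $V$ into three alternating legs'' by composing with a $(1,3)$-rotation of $\vierpartwbwb$; but that insertion places the two new legs \emph{adjacent to the chosen remaining leg of $V$}, and nothing forces the deleted legs to have been adjacent to another leg of $V$. Concretely, take $V$ on positions $1,4,5,8$ (alternating colors) with bicolored pair blocks on $\{2,3\}$ and $\{6,7\}$: the minimal-gap pair is $\{4,5\}$, and after deleting it every surviving leg of $V$ is separated from the deletion site by a nested pair, so re-expanding any leg of $V'$ yields a different partition (e.g.\ $V''=\{1,2,3,8\}$). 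The fix is to work with an \emph{interval} block (one always exists in a noncrossing partition), or to argue constructively as the paper does: build all even alternating one-block partitions from $\vierpartwbwb\otimes\paarpartwb$ by connecting neighbouring blocks (Lemma \ref{LemPartRole}(d)), then assemble general partitions by removing/reinserting interval blocks via Lemma \ref{PropCategOperations}(d).

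In part (b) the central mechanism is missing. Your claimed normal form -- ``a tensor product of unicoloured blocks $b_{s_i}$ and their verticolor reflections'' -- is not justified and is not even the right target: a single block with $c(p)\in k\Z$ and an arbitrary (non-alternating, non-unicolored) coloring lies in $\categ{\HHH}{\loc}{k,d}$ and is not of that form; the paper handles exactly these in its base case by connecting all blocks of $\iota_l\otimes b_k^{\otimes t}$ and then permuting colors inside the resulting block via $\vierpartwwbb$. Your ``subtle case'' of producing $b_s$ with $d\mid s$ but $s\neq k$ is also a red herring: condition (ii) forces $c(b_s)=s\in k\Z$, so such a block is in the set only when $k\mid s$, and your splitting device in fact outputs $b_d^{\otimes k/d}$ rather than a single block (besides, rotating $b_d\otimes\tilde b_d$ to a $(2,2)$-partition only makes sense for $d=2$, and after rotation the upper points are black, so they do not compose with the all-white $b_k$ as written). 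What is genuinely needed, and absent from your sketch, is the inductive step for a nest decomposed $p=p_1\otimes p_2$ with $c(p_1)=ds\notin k\Z$: the paper attaches $\tilde b_d^{\otimes s}$ to the last block of $p_1$ to form $p_1'$ with $c(p_1')=0$ still satisfying (i)--(iii), applies the induction hypothesis to $p_1'$ (one fewer block), detaches the correction blocks by composing with $(\tilde b_d^{\,*})^{\otimes s}\otimes\tilde b_d^{\otimes s}$, treats $b_d^{\otimes s}\otimes p_2$ by the one-block case, and cancels $\tilde b_d^{\otimes s}\otimes b_d^{\otimes s}$ with Lemma \ref{PropCategOperations}(b). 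Without this correction-block device (or an equivalent), the induction cannot close.
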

\begin{proof}
(a) By Lemma \ref{LemPartRole}(d) we may deduce $\sechspartwbwbwb\in\langle\vierpartwbwb\rangle$ from 
 $\vierpartwbwb\otimes\paarpartwb\in\langle\vierpartwbwb\rangle$. Iteratively, we may construct all one block partitions of even length connecting white and black points in an alternating way, and using the category operations, we may construct all partitions of the assertion. Conversely, the set of all noncrossing partitions with the above block rule forms a category of partitions containing $\vierpartwbwb$.

(b) Denote the category $\langle b_k, b_d\otimes \tilde{b_d},\vierpartwwbb,\vierpartwbwb\rangle$ by $\DD$. Let $p\in \categ{\HHH}{\loc}{k,d}$. Since $\DD$ is closed under rotation, we may assume that $p$ has no upper points.  We prove $p\in \DD$ by induction on the number $m$ of blocks of $p$.  
For $m=1$, note that $\iota_l\otimes b_k^{\otimes t}\in \mathcal D$ for all $t\geq 0$ by (a), where $\iota_l\in P^{\twocol}(0,2l)$ consists of a single block on $2l$ points with alternating colors. Using Lemma \ref{LemPartRole}(c), we may connect all blocks of $\iota_l\otimes b_k^{\otimes t}$ and permute its points. We infer that all one block partitions  with $c(p)\in k\Z$ are in $\mathcal D$.

Let $m>1$. By rotation and since $p$ is noncrossing, $p$ is of the form $p=p_1\otimes p_2$ such that $p_2$ consists only of one block. At this point, one would like to apply the induction hypothesis on $p_1$, but we do not have $c(p_1)\in k\Z$ in general. Hence, we use a modified version $p_1'$ instead, constructed as follows.

  Since we are in case $\HHH$, the partition $p_2$ has length at least two and thus $p$ is in nest decomposed form.
Thus $c(p_1)\in d\Z$ and hence also $c(p_2)=c(p)-c(p_1)\in d\Z$. Assume $c(p_1)=ds$ with some $s\geq 0$ (use verticolor reflection in case $s<0$). Let $p_1'$ be the partition obtained from $p_1\otimes \tilde{b_d}^{\otimes s}$ by connecting all points of $\tilde{b_d}^{\otimes s}$ to the last point of $p_1$. Then $p_1'$ is a partition with $m-1$ blocks and $c(p_1')=c(p_1)-ds=0$. 
Furthermore, any nest decomposed form $q_1'\otimes q_2'$ of rotations of $p_1'$ satisfies $c(q_1')\in d\Z$. The proof of this statement is explained by the following scheme:

\setlength{\unitlength}{0.5cm}
\newsavebox{\boxpeins} 
   \savebox{\boxpeins}
   { \begin{picture}(5,1.2)
     \put(0,0){\line(0,1){1.2}}
     \put(5,0){\line(0,1){1.2}}
     \put(0,0){\line(1,0){5}}
     \put(0,1.2){\line(1,0){5}}    
     \put(2.2,0.3){$p_1$}
     \end{picture}}
\newsavebox{\boxpzwei} 
   \savebox{\boxpzwei}
   { \begin{picture}(2,1.2)
     \put(0,0){\line(0,1){1.2}}
     \put(2,0){\line(0,1){1.2}}
     \put(0,0){\line(1,0){2}}
     \put(0,1.2){\line(1,0){2}}    
     \put(0.7,0.3){$p_2$}
     \end{picture}}     
\newsavebox{\boxbd} 
   \savebox{\boxbd}
   { \begin{picture}(2.5,1.2)
     \put(0,0){\line(0,1){1.2}}
     \put(2.5,0){\line(0,1){1.2}}
     \put(0,0){\line(1,0){2.5}}
     \put(0,1.2){\line(1,0){2.5}}    
     \put(0.95,0.2){$\tilde b_d$}
     \end{picture}}     
\newsavebox{\boxconnect} 
   \savebox{\boxconnect}
   { \begin{picture}(0.7,0.3)
     \put(0,0){\line(0,1){0.3}}
     \put(0.7,0){\line(0,1){0.3}}
     \put(0,0){\line(1,0){0.7}}
     \end{picture}}     
\newsavebox{\boxqeins} 
   \savebox{\boxqeins}
   { \begin{picture}(1.5,1.2)
     \put(0,0){\line(0,1){1.2}}
     \put(1.5,0){\line(0,1){1.2}}
     \put(0,0){\line(1,0){1.5}}
     \put(0,1.2){\line(1,0){1.5}}    
     \put(0.3,0.3){$q_1'$}
     \end{picture}}     
\newsavebox{\boxqzweia} 
   \savebox{\boxqzweia}
   { \begin{picture}(1,1.2)
     \put(0,0){\line(0,1){1.2}}
     \put(1,0){\line(0,1){1.2}}
     \put(0,0){\line(1,0){1}}
     \put(0,1.2){\line(1,0){1}}    
     \put(0.1,0.3){$q_2'$}
     \end{picture}}  
\newsavebox{\boxqzweib} 
   \savebox{\boxqzweib}
   { \begin{picture}(14,1.2)
     \put(0,0){\line(0,1){1.2}}
     \put(14,0){\line(0,1){1.2}}
     \put(0,0){\line(1,0){14}}
     \put(0,1.2){\line(1,0){14}}    
     \put(7,0.3){$q_2'$}
     \end{picture}} 
\newsavebox{\boxqzweiaa} 
   \savebox{\boxqzweiaa}
   { \begin{picture}(1,1.2)
     \put(0,0){\line(0,1){1.2}}
     \put(1,0){\line(0,1){1.2}}
     \put(0,0){\line(1,0){1}}
     \put(0,1.2){\line(1,0){1}}    
     \put(0.1,0.3){$q_2''$}
     \end{picture}}  
\newsavebox{\boxqzweibb} 
   \savebox{\boxqzweibb}
   { \begin{picture}(2,1.2)
     \put(0,0){\line(0,1){1.2}}
     \put(2,0){\line(0,1){1.2}}
     \put(0,0){\line(1,0){2}}
     \put(0,1.2){\line(1,0){2}}    
     \put(0.7,0.3){$q_2''$}
     \end{picture}} 
\newsavebox{\boxconnectlang} 
   \savebox{\boxconnectlang}
   { \begin{picture}(2.2,0.3)
     \put(0,0){\line(0,1){0.3}}
     \put(2.2,0){\line(0,1){0.3}}
     \put(0,0){\line(1,0){2.2}}
     \end{picture}}     
\begin{center}
 \begin{picture}(26,1.2)
     \put(0,0){$p=$}
     \put(2,-0.3){\usebox{\boxpeins}}
     \put(8,0){$\otimes$}
     \put(9,-0.3){\usebox{\boxpzwei}}
     \put(20,0){(up to rotation; NDF)}
 \end{picture}
\end{center}
\begin{center}
 \begin{picture}(26,2)
     \put(0,0.8){$p_1'=$}
     \put(2,0.5){\usebox{\boxpeins}}
     \put(7.5,0.5){\usebox{\boxbd}}
     \put(10.5,0.5){\usebox{\boxbd}}
     \put(14.5,0.8){$\ldots$}
     \put(16.5,0.5){\usebox{\boxbd}}
     \put(6.9,0){\usebox{\boxconnect}}
     \put(9.9,0){\usebox{\boxconnect}}
     \put(12.9,0){\usebox{\boxconnect}}
     \put(15.9,0){\usebox{\boxconnect}}
 \end{picture}
\end{center}
\begin{center}
 \begin{picture}(26,1.5)
     \put(2,0.5){\usebox{\boxqzweia}}   
     \put(3.2,0.5){\usebox{\boxqeins}}
     \put(5,0.5){\usebox{\boxqzweib}} 
     \put(2.9,0){\usebox{\boxconnectlang}}    
     \put(20,0.5){(possibly $q_1'\longleftrightarrow q_2'$)}       
 \end{picture}
\end{center}
\begin{center}
 \begin{picture}(26,1.5)
     \put(2,0.5){\usebox{\boxqzweiaa}}   
     \put(3.2,0.5){\usebox{\boxqeins}}
     \put(5,0.5){\usebox{\boxqzweibb}} 
     \put(2.9,0){\usebox{\boxconnectlang}}    
     \put(8,0.8){$\otimes$}
     \put(9,0.5){\usebox{\boxpzwei}}     
     \put(20,0.5){($q_2''=q_2'$ without $\tilde b_d^{\otimes s}$)}       
 \end{picture}
\end{center}
\begin{center}
 \begin{picture}(26,1)
     \put(0,0){$\Longrightarrow$ up to rotation a NDF of $p$, hence $c(q_1')\in d\Z$ and $c(q_2')=c(q_2'')-ds\in d\Z$}
 \end{picture}
\end{center}

We conclude that  $p_1'$ is in $\categ{\HHH}{\loc}{k,d}$ and by induction hypothesis, $p_1'\in\mathcal D$. Composing it with $r\otimes (\tilde{b_d}^*)^{\otimes s}\otimes (\tilde{b_d})^{\otimes s}$, where $r$ is a suitable tensor product of the identity partitions, yields $p_1\otimes \tilde{b_d}^{\otimes s}\in\mathcal D$. Similary $b_d^{\otimes s} \otimes p_2\in\mathcal D$, since the partition $p_2'$ obtained from connecting all blocks of $b_d^{\otimes s} \otimes p_2$ is a one block partition with $c(p_2')=c(p)$. We conclude that $p_1\otimes \tilde{b_d}^{\otimes s}\otimes b_d^{\otimes s} \otimes p_2\in\mathcal D$ and using Lemma \ref{PropCategOperations}(b), we obtain $p=p_1\otimes p_2\in\mathcal D$.
\end{proof}

\subsection{Classification in the case $\HHH$}

\begin{thm}\label{ThmCaseH}
Let $\CC\subset NC^{\twocol}$ be a category of noncrossing partitions in case $\HHH$. Then $\CC$ coincides with one of the following categories.
\begin{itemize}
\item[(i)] If $\CC$ is globally colorized, then $\CC= \categ{\HHH}{\glob}{k}=\langle b_k,\vierpartwbwb,\paarpartww\otimes\paarpartbb\rangle$ for $k=k(\CC)\in 2\N_0$. Here $\categ{\HHH}{\glob}{k}:=\categ{\HHH}{\loc}{k,2}$.
\item[(ii)] If $\CC$ is locally colorized, and $K^{\CC}(\cutpaarpartbb)=\emptyset$, then $\CC=\categg{\HHH'}{\loc}=\langle\vierpartwbwb\rangle$. If $K^{\CC}(\cutpaarpartbb)\neq\emptyset$, we have $\CC=\langle b_k,b_d\otimes\tilde b_d,\vierpartwwbb,\vierpartwbwb\rangle$ for $k=k(\CC)\in\N_0\backslash\{1,2\}$ and $d=d(\CC)\in\N_0\backslash\{1,2\}$. We use the notation $b_0=\emptyset$.
\end{itemize}
\end{thm}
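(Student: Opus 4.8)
The plan is to assemble the theorem from the material already prepared, following the scheme of Section~\ref{SectStrategy}. The possible parameters were fixed in Proposition~\ref{LemRDCaseH}, sample partitions realizing them were produced in Lemma~\ref{LemSamplePartitionsH}, and the candidate categories were described as explicit sets of partitions in Proposition~\ref{DescriptionCaseHLocal}; what is left is, in each of the three cases of the statement, to run the ``sandwich'' $\langle\text{generators}\rangle\subseteq\CC\subseteq M\subseteq\langle\text{generators}\rangle$, where $M$ is the corresponding set of partitions. I expect all of this to be routine bookkeeping apart from one structural point, which I regard as the main obstacle: in the locally colorized case with $K^{\CC}(\cutpaarpartbb)=\emptyset$ one must deduce, from the emptiness of $NDF^{\CC}(\cutpaarpartww)$ and $NDF^{\CC}(\cutpaarpartbb)$ together with $\vierpartwwbb\notin\CC$, that \emph{every} block of \emph{every} partition in $\CC$ has even length with strictly alternating colors.

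\emph{Globally colorized case.} First I would record, using $\paarpartww\otimes\paarpartbb\in\CC$, that Proposition~\ref{LemRDCaseH}(a) gives $d(\CC)=2$ and $k:=k(\CC)\in 2\N_0$, while Lemma~\ref{Lem213NEU}(b) gives $K^{\CC}(\cutpaarpartwb)=K^{\CC}(\cutpaarpartbw)=K^{\CC}(\cutpaarpartww)=K^{\CC}(\cutpaarpartbb)=2\Z$. Then I would check that the generators lie in $\CC$: $\vierpartwbwb$ because we are in case $\HHH$, $b_k$ by Lemma~\ref{LemSamplePartitionsH}(a) (trivially if $k=0$, with $b_0=\emptyset$), and $\paarpartww\otimes\paarpartbb$ by hypothesis; since moreover $b_2\otimes\tilde b_2=\paarpartww\otimes\paarpartbb$ and $\vierpartwwbb$ arises from $\vierpartwbwb$ by permutation of colors (Lemma~\ref{LemPartRole}(a)), this shows $\langle b_k,\vierpartwbwb,\paarpartww\otimes\paarpartbb\rangle=\langle b_k,b_2\otimes\tilde b_2,\vierpartwwbb,\vierpartwbwb\rangle\subseteq\CC$. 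Conversely, for $p\in\CC$ rotated so as to have no upper points I would verify the three conditions defining $\categ{\HHH}{\loc}{k,2}$: all blocks have length $\geq 2$ (Lemma~\ref{LemCases}), $c(p)\in k\Z$ (Proposition~\ref{LemKZ}), and any rotated version $p_1\otimes p_2$ in nest decomposed form has $c(p_1)$ in the relevant set $K^{\CC}(\cdot)=2\Z$. Combined with Proposition~\ref{DescriptionCaseHLocal}(b) (legitimate since $2\mid k$) this closes the sandwich, giving $\CC=\categ{\HHH}{\loc}{k,2}=\categ{\HHH}{\glob}{k}=\langle b_k,\vierpartwbwb,\paarpartww\otimes\paarpartbb\rangle$.

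\emph{Locally colorized case with $K^{\CC}(\cutpaarpartbb)\neq\emptyset$.} Here Proposition~\ref{LemRDCaseH}(b) places us in its subcase~(ii), so $k:=k(\CC)$ and $d:=d(\CC)$ lie in $\N_0\backslash\{1,2\}$, all four sets $K^{\CC}(\cdot)$ equal $d\Z$, and $\vierpartwwbb\in\CC$. I would note that the generators $\vierpartwbwb$, $\vierpartwwbb$, $b_k$ (Lemma~\ref{LemSamplePartitionsH}(a); $b_0=\emptyset$ if $k=0$) and $b_d\otimes\tilde b_d$ (Lemma~\ref{LemSamplePartitionsH}(b); an empty factor if $d=0$) all lie in $\CC$, and then, exactly as in the previous case, that every $p\in\CC$ rotated to have no upper points satisfies conditions (i)--(iii) of the definition of $\categ{\HHH}{\loc}{k,d}$, using Lemma~\ref{LemCases}, Proposition~\ref{LemKZ} and $K^{\CC}(\cdot)=d\Z$. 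With Proposition~\ref{DescriptionCaseHLocal}(b) (applicable since $d\mid k$ when $k\neq 0$, by Proposition~\ref{LemDDivisor}) the sandwich then yields $\CC=\langle b_k,b_d\otimes\tilde b_d,\vierpartwwbb,\vierpartwbwb\rangle$.

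\emph{Locally colorized case with $K^{\CC}(\cutpaarpartbb)=\emptyset$, and distinctness.} Now Proposition~\ref{LemRDCaseH}(b) is in subcase~(i): $k(\CC)=d(\CC)=0$, $NDF^{\CC}(\cutpaarpartww)=NDF^{\CC}(\cutpaarpartbb)=\emptyset$, and $\vierpartwwbb\notin\CC$. The inclusion $\langle\vierpartwbwb\rangle\subseteq\CC$ is immediate. For the reverse inclusion -- the main obstacle -- I would take $p\in\CC$ rotated to have no upper points, fix a block $V$, and argue that the colors of its legs, read in cyclic order, must alternate. If they did not, $V$ would have two cyclically consecutive legs $v,v'$ of equal color with a (possibly empty) subpartition $q$ of $p$ nested strictly between them. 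If $q=\emptyset$, then $v,v'$ are neighbouring points of the same block of the same color, so Lemma~\ref{LemBsInC}(b) forces $\vierpartwwbb\in\CC$ or $\paarpartww\otimes\paarpartbb\in\CC$, contradicting $\vierpartwwbb\notin\CC$ (resp.\ local colorization). If $q\neq\emptyset$, then rotating $p$ on its single line so that $q$ becomes an initial segment and the complementary segment $r$ begins at $v'$ and ends at $v$ exhibits $p$ (up to rotation) as a tensor product $q\otimes r$ -- this is a valid decomposition because $p$ is noncrossing -- with $q\neq\emptyset$ and with the first and last point of $r$ lying in the block $V$; that is, $p$ (up to rotation) lies in $NDF^{\CC}(\cutpaarpartww)$ or $NDF^{\CC}(\cutpaarpartbb)$, a contradiction. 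Hence every block of $p$ has even length with alternating colors, so $p\in\categg{\HHH'}{\loc}$ by Proposition~\ref{DescriptionCaseHLocal}(a), and $\CC=\langle\vierpartwbwb\rangle$. Finally I would observe that the categories listed are pairwise distinct: whether $\paarpartww\otimes\paarpartbb\in\CC$ separates the globally colorized ones from the rest, whether $\vierpartwwbb\in\CC$ singles out $\langle\vierpartwbwb\rangle$, and within each remaining family the invariants $k(\CC)$ and $d(\CC)$ recover the parameters $k$ and $d$.
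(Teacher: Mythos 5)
Your proof is correct and follows essentially the same route as the paper: verify the generators via Lemma \ref{LemSamplePartitionsH}, show every $p\in\CC$ satisfies the defining conditions of the corresponding set $\categ{\HHH}{\loc}{k,d}$ (resp.\ the alternating-block description), and close the sandwich with Proposition \ref{DescriptionCaseHLocal}. The only (harmless) variation is in the subcase $K^{\CC}(\cutpaarpartbb)=\emptyset$, where you treat adjacent equal-colored legs via Lemma \ref{LemBsInC}(b) and get evenness for free from cyclic alternation, while the paper pads the empty nesting region with $\paarpartwb$ to stay inside the nest-decomposed-form framework and argues evenness separately.
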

\begin{proof}
(i) Using Lemma \ref{LemSamplePartitionsH}, we know  $\langle b_k,\vierpartwbwb,\paarpartww\otimes\paarpartbb\rangle\subset \CC$.
For the converse inclusion, let $p\in\CC$. 
Then, $c(p)\in k\Z$ by Proposition \ref{LemKZ}. 
Furthermore, $d(\CC)=2$ by Proposition \ref{LemRDCaseH}.
Thus, if $p_1\otimes p_2$ is any rotated version of $p$ in nest decomposed form, then $c(p_1)\in 2\Z$ by Lemma \ref{LemRDGlobal}.
 By Proposition \ref{DescriptionCaseHLocal}, we infer $p\in\categ{\HHH}{\loc}{k,2}\subset\langle b_k, b_2\otimes\tilde b_2,\vierpartwwbb,\vierpartwbwb\rangle$. Since $b_2\otimes\tilde b_2=\paarpartww\otimes\paarpartbb$, we infer 
$\CC= \langle b_k,\vierpartwbwb,\paarpartww\otimes\paarpartbb\rangle$.

(ii) Let $\CC$ be locally colorized and let $k:=k(\CC)$ and $d:=d(\CC)$.

\emph{Case 1.} Let $K^{\CC}(\cutpaarpartbb)=\emptyset$. Since $\CC$ is in case $\HHH$, we have $\langle\vierpartwbwb\rangle\subset \CC$. 
Conversely, let $p\in\CC$ be a partition without upper points. Assume that there is a block of $p$ which does not connect white and black points in an alternating way. Using rotation, we can bring $p$ in nest decomposed form $p=p_1\otimes p_2$ such that the first and the last point of $p_2$ have the same color. 
(In case $p_1=\emptyset$, we replace $p_1$ by $\paarpartwb$.)
This contradicts $K^{\CC}(\cutpaarpartbb)=K^{\CC}(\cutpaarpartww)=\emptyset$.
We conclude that each block of $p\in\CC$ connects white and black points in an alternating way. Furthermore, they are of even length. Otherwise, the first and the last point would have the same color, and again we would find an example of a partition in $NDF^{\CC}(\cutpaarpartbb)$ or in $NDF^{\CC}(\cutpaarpartww)$. By Proposition \ref{LemBaseCategoryH}, we infer $p\in\langle\vierpartwbwb\rangle$ and hence $\CC=\langle\vierpartwbwb\rangle$.

\emph{Case 2.} Let $K^{\CC}(\cutpaarpartbb)\neq \emptyset$. 
Then $\langle b_k,b_d\otimes\tilde b_d,\vierpartwwbb,\vierpartwbwb\rangle\subset\CC$ by  Proposition \ref{LemRDCaseH} and Lemma \ref{LemSamplePartitionsH}.
Conversely, let $p\in\CC$. By Lemma \ref{LemKZ}, we have $c(p)\in k\Z$. Furthermore,  if $p'=p_1\otimes p_2$ is any rotated version of $p$ in nest decomposed form, we have $c(p_1)\in d\Z$ by Proposition \ref{LemRDCaseH}. We thus have $p\in\categ{\HHH}{\loc}{k,d}$, which by Proposition \ref{DescriptionCaseHLocal} yields
$p\in\langle b_k,b_d\otimes\tilde b_d,\vierpartwwbb,\vierpartwbwb\rangle$. This shows $\langle b_k,b_d\otimes\tilde b_d,\vierpartwwbb,\vierpartwbwb\rangle=\CC$.
\end{proof}

\begin{cor}
We have $\categ{\HHH}{\loc}{k,d}=\langle b_k,b_d\otimes\tilde b_d,\vierpartwwbb,\vierpartwbwb\rangle$ in Proposition \ref{DescriptionCaseHLocal}. In particular, all these categories are pairwise different.
\end{cor}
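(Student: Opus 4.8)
The plan is to prove the reverse inclusion to Proposition~\ref{DescriptionCaseHLocal}(b) and then read off pairwise distinctness from the invariants $k(\cdot)$ and $d(\cdot)$. Write $\DD:=\langle b_k,\,b_d\otimes\tilde b_d,\,\vierpartwwbb,\,\vierpartwbwb\rangle$. Proposition~\ref{DescriptionCaseHLocal}(b) already gives $\categ{\HHH}{\loc}{k,d}\subseteq\DD$, so it suffices to show that (1) each of the four generators lies in $\categ{\HHH}{\loc}{k,d}$, and (2) $\categ{\HHH}{\loc}{k,d}$ is a category of partitions; minimality of $\DD$ then yields $\DD\subseteq\categ{\HHH}{\loc}{k,d}$, hence equality.

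Step (1) is a direct check. A single-block partition admits no nest decomposed form, so $b_k$, $\vierpartwwbb$ and $\vierpartwbwb$ satisfy condition (iii) vacuously, while $c(b_k)=k\in k\Z$ and $c(\vierpartwwbb)=c(\vierpartwbwb)=0$; all their blocks have length $\geq 2$ (with $b_0=\emptyset$). For $b_d\otimes\tilde b_d$ one has $c=d-d=0\in k\Z$, both blocks have length $d\geq 2$, and since $b_d$ and $\tilde b_d$ are each a single block, no rotation of $b_d\otimes\tilde b_d$ can be split through a block except at its midpoint; thus the only nest decomposed forms that occur are $b_d\otimes\tilde b_d$ and its reflection, with $c(p_1)=\pm d\in d\Z$. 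Here $d\mid k$ is used, so that $k\Z\subseteq d\Z$.

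Step (2) is the substantial point. The set $\categ{\HHH}{\loc}{k,d}$ contains $\paarpartwb,\paarpartbw,\idpartww,\idpartbb$, and is closed under rotation and involution since conditions (i)--(iii) are phrased in terms of rotated versions and $c$ behaves as in Lemma~\ref{LemC}(c),(d). Closure under tensor product uses Lemma~\ref{LemC}(a) for (ii), and for (iii) the fact that in a noncrossing tensor product every block lies in a single factor, together with $k\Z\subseteq d\Z$. The delicate case is composition: by the usual reduction (Remark~\ref{RemComp} and Lemma~\ref{RemErasePoints}(b)) it is enough to show stability under erasing two neighbouring points of inverse colour. Erasing two such points in distinct blocks preserves (i) and leaves $c$ unchanged, so (ii) and (iii) persist by Lemma~\ref{LemC}; the only way a block of length one could be produced is by erasing two neighbouring inverse-coloured points inside a single block of length three, and this is excluded by condition (iii), since such a block forces a rotated nest decomposed form $p_1\otimes p_2$ with $c(p_1)\notin d\Z$. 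This stability check -- that (i)--(iii) all survive composition -- is the main obstacle of the proof.

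Granting $\categ{\HHH}{\loc}{k,d}=\DD$, pairwise distinctness is immediate, because $k$ and $d$ are recovered intrinsically from the category: $c(b_k)=k$ while every partition $p\in\categ{\HHH}{\loc}{k,d}$ has $c(p)\in k\Z$ (Proposition~\ref{LemKZ}, using additivity of $c$ under the category operations, Lemma~\ref{LemC}), so $k(\categ{\HHH}{\loc}{k,d})=k$; and $b_d\otimes\tilde b_d$ gives $d\in K^{\categ{\HHH}{\loc}{k,d}}(\cutpaarpartbb)=K^{\categ{\HHH}{\loc}{k,d}}(\cutpaarpartwb)=d\Z$ by Lemma~\ref{Lem213NEU}(b) (as $\vierpartwwbb$ lies in the category), so $d(\categ{\HHH}{\loc}{k,d})=d$. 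Hence the categories $\categ{\HHH}{\loc}{k,d}$ are pairwise distinct for distinct admissible pairs $(k,d)$.
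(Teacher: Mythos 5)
Your overall strategy --- proving directly that $\categ{\HHH}{\loc}{k,d}$ is itself a category of partitions containing the four generators, so that minimality of $\langle b_k,b_d\otimes\tilde b_d,\vierpartwwbb,\vierpartwbwb\rangle$ gives the missing inclusion --- is a legitimate alternative to the paper's argument. The paper instead gets the reverse inclusion from Theorem \ref{ThmCaseH}: there it is shown that \emph{any} locally colorized category $\CC$ in case $\HHH$ with $K^{\CC}(\cutpaarpartbb)\neq\emptyset$ satisfies $\langle b_k,b_d\otimes\tilde b_d,\vierpartwwbb,\vierpartwbwb\rangle\subset\CC\subset\categ{\HHH}{\loc}{k,d}$ for $k=k(\CC)$, $d=d(\CC)$; applying this to $\CC=\langle b_k,b_d\otimes\tilde b_d,\vierpartwwbb,\vierpartwbwb\rangle$ and combining with Proposition \ref{DescriptionCaseHLocal}(b) closes the circle without ever having to verify the category axioms for the combinatorially defined set. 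Your route re-proves a substantial part of that theorem, and the distinctness argument via the invariants $k(\cdot)$ and $d(\cdot)$ at the end is fine.

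However, as written your proof has a genuine gap exactly at the point you yourself call ``the main obstacle'': closure of $\categ{\HHH}{\loc}{k,d}$ under composition. You reduce composition to erasing neighbouring pairs of points of inverse colors and then claim that conditions (ii) and (iii) ``persist by Lemma~\ref{LemC}''. Lemma \ref{LemC} only controls the global quantity $c(p)$, so it does give (ii); it says nothing about condition (iii). The difficulty is that erasing such a pair \emph{merges two blocks into one}, and the merged block produces nest decomposed forms $p_1\otimes p_2$ of rotations of the new partition that did not exist before the erasure --- namely those where the first point of $p_2$ lies in one of the two old blocks and the last point in the other. For these new decompositions one must still verify $c(p_1)\in d\Z$, which requires an actual argument (combining $c$-values of nest decomposed forms of the original partition with $c(p)\in k\Z\subset d\Z$ and the fact that the erased pair contributes $0$ to $c$), together with a case analysis over where the erased pair sits relative to $p_1$ and $p_2$. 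Your treatment of condition (i) --- ruling out a leftover singleton from a length-three block using (iii) and $d\neq 1$ --- is correct, but the preservation of (iii) under block merging is the heart of the matter and is not established. The claim itself is true (it follows a posteriori from the sandwich argument above), but your proof does not prove it.
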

\begin{proof}
In the above theorem, we showed $\langle b_k,b_d\otimes\tilde b_d,\vierpartwwbb,\vierpartwbwb\rangle\subset \CC\subset \categ{\HHH}{\loc}{k,d}$ whenever $\CC$ is a locally colorized category in case $\HHH$ with $k=k(\CC)$, $d=d(\CC)$ and $K^{\CC}(\cutpaarpartbb)\neq \emptyset$. Together with  $\categ{\HHH}{\loc}{k,d}\subset\langle b_k,b_d\otimes\tilde b_d,\vierpartwwbb,\vierpartwbwb\rangle$ of Proposition \ref{DescriptionCaseHLocal}, we have equality here. Moreover, it can easily be seen that the sets $\categ{\HHH}{\loc}{k,d}$ are distinct.
\end{proof}

\begin{rem}
\begin{itemize}
\item[(a)] One can show that the categories  $\langle b_k,\vierpartwbwb,\paarpartww\otimes\paarpartbb\rangle$  are given by the set of all partitions $p\in NC^{\twocol}$ such that $c(p)\in k\Z$ and all blocks of $p$ have even length, for $k\in 2\Z$.
\item[(b)]
 The non-colored case $\langle\vierpart\rangle$ is obtained from $\langle b_k,\vierpartwbwb,\paarpartww\otimes\paarpartbb\rangle$ for $k=2$ in the sense of Proposition \ref{PropOneColored}.
\end{itemize}
\end{rem}

\section{Case $\SSS$}\label{SectCaseS}

We now consider the case $\SSS$, i.e. categories $\CC\subset NC^{\twocol}$ such that $\vierpartwbwb$ and $\singletonw\otimes\singletonb$ are in $\CC$. 

\subsection{Determining the parameters}

\begin{prop}\label{LemRDCaseS}
Let $\CC\subset NC^{\twocol}$ be a category of noncrossing partitions in case $\SSS$. 
\begin{itemize}
\item[(a)] We always have $\positionerwbwb\in\CC$.
\item[(b)] If $\CC$ is locally colorized, then  $\vierpartwwbb\notin\CC$.
\item[(c)] If $\CC$ is globally colorized, then $d(\CC)=1$ and $k(\CC)\in\N_0$. Moreover, $\positionerwwbb\in\CC$.
\item[(d)] If $\CC$ is locally colorized, then $k(\CC),d(\CC)\in\N_0\backslash\{1\}$ and  we have:
\[K^{\CC}(\cutpaarpartwb)=K^{\CC}(\cutpaarpartbw)=d\Z \quad\textnormal{and}\quad K^{\CC}(\cutpaarpartbb)= -K^{\CC}(\cutpaarpartww)=d\Z+1\]
\end{itemize}
\end{prop}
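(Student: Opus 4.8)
Throughout, recall that $\CC$ being in case $\SSS$ means $\vierpartwbwb\in\CC$ and $\singletonw\otimes\singletonb\in\CC$; in particular Lemma~\ref{LemPartRole}(b) is available, so we may disconnect any point of a partition in $\CC$, turning it into a singleton.

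\emph{Parts (a)--(c).} For (a) I would obtain $\positionerwbwb$ directly from $\vierpartwbwb$: its word is $aa^{-1}aa^{-1}$ with first and third point white, and disconnecting those two white points (Lemma~\ref{LemPartRole}(b)) leaves precisely the pair block on the two black points together with two white singletons, i.e.\ $\positionerwbwb$. For (b), suppose $\CC$ is locally colorized but $\vierpartwwbb\in\CC$; disconnecting the two black points of $\vierpartwwbb$ gives $\paarpartww\otimes\singletonb\otimes\singletonb\in\CC$, and reading this as $p_1\otimes p_2$ with $p_1=\paarpartww$, Lemma~\ref{PropCategOperations}(c) produces $p_1\otimes\tilde p_1=\paarpartww\otimes\paarpartbb\in\CC$, contradicting local colorization; hence $\vierpartwwbb\notin\CC$. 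For (c), by (a) we have $\positionerwbwb\in\CC$, and since $\CC$ is globally colorized Lemma~\ref{LemPartRole}(a) lets me permute its colours from $\circ\bullet\circ\bullet$ to $\circ\circ\bullet\bullet$, so $\positionerwwbb\in\CC$; then $d(\CC)=1$ by Lemma~\ref{LemRDGlobal}, and $k(\CC)\in\N_0$ is immediate from the definition.

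\emph{Part (d).} Assume $\CC$ locally colorized. Then $\paarpartww\otimes\paarpartbb\notin\CC$ and, by (b), $\vierpartwwbb\notin\CC$. The key observation is that the partition $\positionerwbwb$ produced in (a) is exactly $\positionertpluseins$ for $t=0$ (with the conventions of Definition~\ref{DefNotationNegativ}), so the least $t\in\N_0$ with $\positionertpluseins\in\CC$ is $r=0$; Lemma~\ref{Lem213NEU}(c) then gives verbatim
\[
K^\CC(\cutpaarpartwb)=K^\CC(\cutpaarpartbw)=d\Z\quad\text{and}\quad K^\CC(\cutpaarpartbb)=-K^\CC(\cutpaarpartww)=d\Z+1 .
\]
The remaining, and main, task is to exclude $d(\CC)=1$ and $k(\CC)=1$, which I would do by manufacturing $\paarpartww\otimes\paarpartbb$ in each case. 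If $d(\CC)=1$ then $1\in K^\CC(\cutpaarpartwb)$, so $\positionerwwbb\in\CC$ by Lemma~\ref{LemBsInC}(a); using $\positionerwbwb\in\CC$ and Lemma~\ref{LemPartRole}(f) I can swap the nested black singleton of $\positionerwwbb$ past its white neighbour (inverting both colours), which makes the pair block monochromatic and yields $\singletonw\otimes\singletonw\otimes\paarpartbb\in\CC$; Lemma~\ref{PropCategOperations}(c), applied with $p_2=\paarpartbb$, then gives $\paarpartbb\otimes\paarpartww\in\CC$, hence $\paarpartww\otimes\paarpartbb\in\CC$ by rotation, a contradiction. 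If $k(\CC)=1$ then $\singletonw\in\CC$ by Lemma~\ref{LemKD}(a), so $\downsingletonw=\singletonw^*\in\CC$; disconnecting the third (white) point of $\vierpartwbwb$ and then capping it off with $\downsingletonw$ leaves a single block on three points coloured $\circ\bullet\bullet$, whose last two points are neighbouring, equally coloured, and in one block, so Lemma~\ref{LemBsInC}(b) forces $\vierpartwwbb\in\CC$ or $\paarpartww\otimes\paarpartbb\in\CC$ -- both impossible. Together with $k(\CC),d(\CC)\in\N_0$ this gives $k(\CC),d(\CC)\in\N_0\setminus\{1\}$.

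I expect the bookkeeping in (d) to be the only delicate point: identifying $\positionerwbwb$ with $\positionertpluseins|_{t=0}$ so that Lemma~\ref{Lem213NEU}(c) applies directly, and -- above all -- choosing the right disconnect/swap/reflect moves so that $\paarpartww\otimes\paarpartbb$ is forced into $\CC$ whenever $k$ or $d$ equals $1$.
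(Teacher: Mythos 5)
Your proof is correct and follows essentially the same route as the paper: disconnecting points of $\vierpartwbwb$ and $\vierpartwwbb$ for (a)--(b), colour permutation plus Lemma \ref{LemRDGlobal} for (c), and Lemma \ref{Lem213NEU}(c) with $r=0$ together with a $\paarpartww\otimes\paarpartbb$-contradiction for (d). The only deviation is your exclusion of $k(\CC)=1$ via capping a disconnected point of $\vierpartwbwb$ with $\downsingletonw$ and invoking Lemma \ref{LemBsInC}(b); the paper instead notes that $k(\CC)=1$ also forces $\positionerwwbb\in\CC$ (since $k(\CC)\in K^{\CC}(\cutpaarpartwb)$) and reuses the same contradiction as for $d(\CC)=1$ --- both work.
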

\begin{proof}
(a) By Lemma \ref{LemPartRole}, we may disconnect the white points from $\vierpartwbwb$.

(b) If $\vierpartwwbb\in\CC$, then also $\paarpartww\otimes\singletonb\otimes\singletonb\in\CC$, which implies $\paarpartww\otimes\paarpartbb\in\CC$.

(c) By (a) and using color permutation, we have $\positionerwwbb\in\CC$, thus $d(\CC)=1$, by Lemma \ref{LemRDGlobal}.

(d) Use (a), (b) and Lemma \ref{Lem213NEU}(c). If $d(\CC)=1$ or $k(\CC)=1$, then $\positionerwwbb\in\CC$ by Lemma \ref{LemBsInC}(a).  Using (a) and Lemma \ref{LemPartRole}(e), we infer $\singletonw\otimes\singletonw\otimes\paarpartbb\in\CC$ which implies $\paarpartww\otimes\paarpartbb\in\CC$ by Lemma \ref{RemErasePoints}, a contradiction.
\end{proof}

\subsection{Finding partitions realizing the parameters}

\begin{lem}\label{LemSamplePartitionsS}
 Let $\CC\subset NC^{\twocol}$ be a category in case $\SSS$.
\begin{itemize}
 \item[(a)] If $k=k(\CC)\neq 0$, then $\singletonw^{\otimes k}\in\CC$.
 \item[(b)] If $d=d(\CC)\neq 0$, then $\positionerd\in\CC$.
\end{itemize}
\end{lem}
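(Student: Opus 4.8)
The plan is to recognize that this lemma is essentially a bookkeeping corollary of the machinery already developed, namely of Lemma \ref{LemKD}(a). Since $\CC$ is in case $\SSS$, the partition $\singletonw\otimes\singletonb$ lies in $\CC$ by definition, so the single standing hypothesis of Lemma \ref{LemKD}(a) is satisfied. That lemma then asserts verbatim that $\singletonw^{\otimes k}\in\CC$ and that $\positionerd\in\CC$, which are exactly statements (a) and (b) (recalling that here $k=k(\CC)$ and $d=d(\CC)$ are assumed nonzero, so the conclusions are not vacuous). So the proof is a one-line invocation.

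If one prefers to unwind why Lemma \ref{LemKD}(a) applies rather than simply citing it, the argument is as follows. For (b): since $d=d(\CC)\neq 0$ we have $d\in K^{\CC}(\cutpaarpartwb)$, so there is a partition $p_1\otimes p_2\in\CC$ in nest decomposed form with $c(p_1)=d$; by Lemma \ref{LemPartRole}(b) we may assume $p_1$ consists only of singletons, and composing with copies of $\baarpartwb$ and $\baarpartbw$ in the sense of Remark \ref{RemComp} we may take them all white, so $p_1=\singletonw^{\otimes d}$; then Lemma \ref{LemBsInC}(a) produces $\positionerd\in\CC$. For (a): by the definition of $k(\CC)$ there is $p\in\CC$ with $c(p)=k>0$; erasing its black points with the bicolored pair partitions, then disconnecting all remaining points into singletons using $\singletonw\otimes\singletonb$ (Lemma \ref{LemPartRole}(b)), and finally removing inverse-colored singleton pairs, reduces $p$ to $\singletonw^{\otimes k}$, which therefore lies in $\CC$.

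There is no real obstacle here: the entire content has already been packaged into Lemmas \ref{LemKD} and \ref{LemBsInC}, and all that case $\SSS$ contributes is the hypothesis $\singletonw\otimes\singletonb\in\CC$ that those results need. The role of the present lemma is purely organizational — it records, in the form that will be convenient for the classification in case $\SSS$, precisely which sample partitions realize the parameters $k$ and $d$.
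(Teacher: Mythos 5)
Your proposal is correct and matches the paper exactly: the paper's entire proof of this lemma is the single citation "See Lemma \ref{LemKD}(a)", whose hypothesis $\singletonw\otimes\singletonb\in\CC$ holds by the definition of case $\SSS$. Your optional unwinding also mirrors the paper's own proof of Lemma \ref{LemKD}(a), so nothing further is needed.
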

\begin{proof}
See Lemma \ref{LemKD}(a).
\end{proof}

\subsection{Description of natural categories}

\begin{prop}\label{DescriptionCaseSLocal}
We have the following natural categories in case $\SSS$.
Let $k,d\in\N_0$  such that $d$ is a divisor of $k$, if $k\neq 0$. Denote by $\categ{\SSS}{\loc}{k,d}$ the set of all partitions $p\in NC^{\twocol}$ such that
 \begin{itemize}
 \item[(i)] $c(p)\in k\Z$,
 \item[(ii)] if $p_1\otimes p_2$ is any rotated version of $p$ in nest decomposed form such that the first and the last point of $p_2$
   \begin{itemize}
    \item[$\ldots$] have inverse colors, then $c(p_1)\in d\Z$,
    \item[$\ldots$] both are black, then $c(p_1)\in d\Z+1$,
    \item[$\ldots$] both are white, then $-c(p_1)\in d\Z+1$.
   \end{itemize}
 \end{itemize}
We have $\categ{\SSS}{\loc}{k,d}\subset \langle \singletonw^{\otimes k}, \positionerd,\vierpartwbwb,\singletonw\otimes\singletonb\rangle$.
\end{prop}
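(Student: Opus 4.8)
Set $\DD:=\langle \singletonw^{\otimes k},\positionerd,\vierpartwbwb,\singletonw\otimes\singletonb\rangle$. Being a category, $\DD$ is closed under rotation, involution and verticolor reflection (Lemma~\ref{PropCategOperations}(a)); it lies in case $\SSS$, and a direct check that each of its four generators satisfies conditions (i) and (ii), together with the fact that $\paarpartww\otimes\paarpartbb\notin\categ{\SSS}{\loc}{k,d}$ for $d\geq 2$, shows that $\DD$ is locally colorized, so that $\positionerwbwb\in\DD$ and $\vierpartwwbb\notin\DD$ by Proposition~\ref{LemRDCaseS}(a),(b). In particular the operations of Lemma~\ref{LemPartRole}(b),(d),(f) --- disconnecting a point into a singleton, connecting neighbouring blocks meeting in inverse colours, and moving a singleton past an inverse-coloured neighbour --- are available inside $\DD$. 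Since $\DD$ is rotation-closed, it suffices to show $p\in\DD$ for each $p\in\categ{\SSS}{\loc}{k,d}$ without upper points, and the plan is to do this by induction on the number $n$ of blocks of $p$ of size at least two.

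First I would collect a toolkit of members of $\DD$. From $\positionerd\in\DD$ one gets $d\in K^{\DD}(\cutpaarpartwb)$, hence $d\Z\subseteq K^{\DD}(\cutpaarpartwb)$ by Proposition~\ref{LemDDivisor}, and so $\positioners\in\DD$ for every $s\in d\Z$ by Lemma~\ref{LemBsInC}(a). From $\positionerwbwb\in\DD$ one gets $1\in K^{\DD}(\cutpaarpartbb)$, and Lemma~\ref{Lem213NEU}(c) together with Lemma~\ref{LemBsInC}(a) then produces, for every $s\in d\Z+1$, the positioner whose pair block has two black legs and whose outer part is $\singletonw^{\otimes s}$, and, by verticolor reflection, the corresponding ``two white legs'' positioners indexed by $-(d\Z+1)$. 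Finally, $\singletonw^{\otimes k}\in\DD$ and $\singletonw\otimes\singletonb\in\DD$ together with permutation of colours of neighbouring singletons (Lemma~\ref{LemPartRole}(b)) yield every singleton-only partition with $c$-value in $k\Z$, while $\vierpartwbwb$, $\paarpartwb$ and Lemma~\ref{LemPartRole}(d) yield every single-block partition whose legs alternate in colour.

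For the inductive step: if $n=0$, then $p$ is a tensor product of singletons, $c(p)\in k\Z$ by (i), and $p\in\DD$ by the toolkit. If $n\geq 1$, rotate on the line so that the last point of $p$ lies in a block $B$ with $|B|\geq 2$, and write $p=p_1\otimes p_2$, where $p_2$ is the shortest suffix whose first and last point lie in $B$; this is a nest decomposition, with $c(p_1)=0$ by convention if $p_1=\emptyset$. According to the colours of the two endpoints of $p_2$, condition (ii) gives $c(p_1)\in d\Z$, or $c(p_1)\in d\Z+1$, or $-c(p_1)\in d\Z+1$. In each case one takes from the toolkit a positioner $g$ of the matching colour family whose outer part has $c$-value $-c(p_1)$, attaches its pair block to the last leg of $p_1$ by a connection in inverse colours (Lemma~\ref{LemPartRole}(d)), and calls the result $p_1'$. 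Then $c(p_1')=0\in k\Z$, and $p_1'$ still has exactly $n-1$ blocks of size $\geq 2$ (a block of $p_1$ has merely been enlarged and only singletons added), so the induction hypothesis applies once one checks $p_1'\in\categ{\SSS}{\loc}{k,d}$; this is done exactly as with the diagram in the proof of Proposition~\ref{DescriptionCaseHLocal}: every nest decomposition of a rotation of $p_1'$ is, up to the singletons and the enlargement coming from $g$, a nest decomposition of a rotation of $p$, and the extra $c$-contribution of $g$ lies in the appropriate one of $d\Z$, $d\Z+1$, $-(d\Z+1)$. Given $p_1'\in\DD$, composing with a suitable tensor product of identity partitions and a reflected copy of $g$ detaches $g$, so that $p_1\otimes g\in\DD$ with $g$ now a separate collection of blocks; symmetrically, attaching another copy of $g$ to $B$ and recursively reducing the blocks nested inside $B$ gives $\tilde g\otimes p_2\in\DD$ up to rotation; tensoring these two and then, using the pair partitions (Lemma~\ref{RemErasePoints}(b)), erasing the two adjacent, mutually inverse copies of $g$ recovers $p=p_1\otimes p_2\in\DD$.

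The main obstacle is the bookkeeping in this last step, and it is what makes the argument longer than its case-$\HHH$ analogue. One must use \emph{positioner} gadgets (singletons plus a single pair block) rather than full blocks, precisely so that forming $p_1'$ does not increase the number of blocks of size $\geq 2$; one must verify that attaching $g$ creates no forbidden nest decomposition, tracking carefully the three colour alternatives of (ii) and which of $K^{\DD}(\cutpaarpartwb)$, $K^{\DD}(\cutpaarpartbb)$, $K^{\DD}(\cutpaarpartww)$ the compensating gadget is drawn from; and one must deal with the blocks nested inside $B$ in $p_2$ by the same compensation, applied in each gap of $B$. Finally, the degenerate parameters are treated separately, in particular $d\leq 1$, where $\positionerd$ already forces $\DD$ to be globally colorized and the inclusion follows from the globally colorized description.
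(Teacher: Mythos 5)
Your overall strategy --- induction on the number of blocks of size at least two, compensating the value $c(p_1)$ by singleton material drawn from one of three colour families according to condition (ii), and recombining via the pair partitions --- is the same as the paper's, and several of your toolkit items ($\positionerwbwb\in\DD$, positioners indexed by $d\Z$ and $d\Z+1$) appear there too. But as written the argument has genuine gaps. The crux of the whole proof is the case of a \emph{single} block of size $\geq 2$ whose legs do \emph{not} alternate in colour, with prescribed singletons in each gap: your toolkit only yields single-block partitions with alternating legs, and your recursion ultimately reduces everything to exactly this unaddressed configuration. The paper resolves it by inserting ``correction points'' into the block (doubling a leg wherever two consecutive legs share a colour, together with compensating singletons), so that the resulting block is alternating and hence lies in $\langle\vierpartwbwb\rangle$, then filling the gaps using $\singletonw\otimes\singletonb$, $\singletonw^{\otimes k}$ and $\positionerd$, and finally erasing the correction pairs. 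You flag this as ``bookkeeping'', but it is the substantive step of the proof, not bookkeeping.

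Two further defects are concrete. First, the decomposition $p=p_1\otimes p_2$ with ``$p_2$ the shortest suffix whose first and last point lie in $B$'' is not a tensor decomposition once $B$ has more than two legs, since $B$ then straddles $p_1$ and $p_2$; the paper instead rotates so that $p_2$ is a union of blocks containing exactly one block of size $\geq 2$ (i.e.\ $m(p_2)=1$), which is what makes the induction close cleanly in its Cases 3a/3b. Second, your treatment of $d\leq 1$ by appeal to ``the globally colorized description'' is circular: Theorem \ref{ThmCaseS}(i) is itself proved by applying the present proposition with $d=1$. Relatedly, invoking Lemma \ref{Lem213NEU}(c) requires $\vierpartwwbb\notin\DD$ and $\paarpartww\otimes\paarpartbb\notin\DD$, which you justify only via the reverse inclusion $\DD\subset\categ{\SSS}{\loc}{k,d}$, not established at this stage; the containment $d\Z+1\subset K^{\DD}(\cutpaarpartbb)$ can be produced directly from $\positionerwbwb$ and $\positionerd$, but that is not the route you take. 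The paper's proof needs none of these locally-colorized hypotheses and treats all admissible $k,d$ uniformly.
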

\begin{proof} Denote $\langle \singletonw^{\otimes k}, \positionerd,\vierpartwbwb,\singletonw\otimes\singletonb\rangle$ by $\mathcal D$ and let $p\in\categ{\SSS}{\loc}{k,d}$. By rotation, we may always assume that $p$ has no upper points.
 We give a proof by induction on the number $m(p)$ of those blocks of $p$ which have length greater or equal two. 

\emph{Case 1.} Let $m(p)=0$, i.e. $p$ consists only of singletons. 
Since $c(p)\in k\Z$, we have up to permutation of the colors (see Lemma \ref{LemPartRole}(b)) $p=\singletonw^{\otimes kt}\otimes\left(\singletonw\otimes\singletonb\right)^{\otimes w}$ for some number $w$. Hence $p\in\mathcal D$.

\emph{Case 2.} Let $m(p)=1$. Using rotation, $p$ is of the following form:
\[p=a^{\epsilon_1}X_1a^{\epsilon_2}X_2\ldots a^{\epsilon_l}X_l\]
Here, the points $a^{\epsilon_i}$ form a block of length $l\geq 2$, and the $X_i$ are some tensor products of the singletons $\singletonw$ and $\singletonb$. If now all points $a^{\epsilon_i}$ had alternating colors, we could first argue that the partition $a^{\epsilon_1}\ldots a^{\epsilon_l}$ is in $\langle\vierpartwbwb\rangle\subset\mathcal D$ and then insert the tensor products $X_i$ of singletons between the legs using $\singletonw^{\otimes k}$ and $\positionerd$. Unfortunately, the alternating coloring is not always the case. We therefore construct a partition $p'$ involving some ``correction points''. It will be of the form:
\[p'=A_1'X_1'A_2'X_2'\ldots A_l'X_l'\]
The construction of $p'$ is as follows. If $a^{\epsilon_i}$ and $a^{\epsilon_{i+1}}$ have different colors, then up to permutation of the colors, $X_i$ is of the form $X_i=\singletonw^{\otimes dt_i}\otimes \left(\singletonw\otimes\singletonb\right)^{\otimes w_i}$ for some $w_i\in\N_0$, $t_i\in\Z$, by condition (ii) of $\categ{\SSS}{\loc}{k,d}$. We put $A_{i+1}':=a^{\epsilon_{i+1}}$ and $X_i':=X_i$. On the other hand, if $a^{\epsilon_i}$ and $a^{\epsilon_{i+1}}$ both are black, then $X_i=\singletonw^{\otimes dt_i+1}\otimes \left(\singletonw\otimes\singletonb\right)^{\otimes w_i}$ up to permutation, and we put $A_{i+1}':=a^{-\epsilon_{i+1}}a^{\epsilon_{i+1}}$, $X_i':=X_i\otimes\singletonb$, $w_i':=w_i+1$. If $a^{\epsilon_i}$ and $a^{\epsilon_{i+1}}$ both are white, then  $X_i':=X_i\otimes\singletonw$ instead. 
Finally, we put $A_1':=a^{\epsilon_1}$ and $X_l':=X_l$ if $a^{\epsilon_1}$ and $a^{\epsilon_l}$ have inverse colors and $A_1':=a^{-\epsilon_l}a^{\epsilon_1}$, $X_l':=X_l\otimes \singletonb$ or $X_l':=X_l\otimes \singletonw$ otherwise.

Now, the partition $q_1:=A_1'A_2'\ldots A_l'$ consists only of one block of even length with alternating colors, by construction. By Lemma \ref{LemBaseCategoryH}, it is contained in $\langle\vierpartwbwb\rangle\subset\DD$. 

Let $q_2$ be the partition obtained from $q_1$ by inserting subpartitions $(\singletonw\otimes\singletonb)^{\otimes w_i'}$ between $A_i'$ and $A_{i+1}'$, and $(\singletonw\otimes\singletonb)^{\otimes w_l'}$ after $A_l'$. By Lemma \ref{PropCategOperations}(d) we have $q_2\in\DD$. 
Moreover, $c(p')=c(p)$ by construction and $c(p)=\sum_i dt_i\in k\Z$. Let $q_3:=\singletonw^{\otimes c(p)}$, hence
$q_2\otimes q_3$ is in $\DD$. Since $\positionerd\in\DD$, we may use Lemma \ref{LemPartRole} to shift the partitions $\singletonw^{\otimes dt_i}$ (or $\singletonb^{\otimes dt_i}$ resp.) at the right positions, and we 
infer $p'\in\DD$. Using the pair partitions, we finally erase all extra points in $p'$ together with the ``correction singletons'' and we deduce $p\in\DD$.

\emph{Case 3}. Let $m(p)>1$. By rotation, $p$ can be brought in nest decomposed form $p=p_1\otimes p_2$ such that $m(p_2)=1$. Such a decomposition exists since $p$ is noncrossing. 

\emph{Case 3a.} If the first and the last point of $p_2$ have inverse colors, we have $c(p_1)\in d\Z$ by condition (ii). We may assume $c(p_1)\geq 0$, i.e. $c(p_1)=ds$ for some $s\in\N_0$. Then, the partition $p_1':=p_1\otimes \singletonb^{\otimes ds}$ satisfies $c(p_1')=0$ and $m(p_1')=m(p_1)=m(p)-1$. As conditions (i) and (ii) are fulfilled for $p_1'$ (compare the proof of Proposition \ref{LemBaseCategoryH}), we infer $p_1'\in \DD$ by induction hypothesis. By Case 2, we also have $p_2':=\singletonw^{\otimes ds}\otimes p_2\in\DD$, since $c(p_2')=c(p_2)+c(p_1)=c(p)\in k\Z$. Thus, we obtain $p_1'\otimes p_2'\in\DD$ and hence $p=p_1\otimes p_2\in\DD$ by Lemma \ref{PropCategOperations}(b).

\emph{Case 3b.} On the other hand, if both the first and the last point of $p_2$ are black, we consider $p_1':=p_1\otimes \singletonb$. Furthermore, we consider the partition $p_2'$ obtained from $p_2$ by changing the color of its first point from black to white and we insert a black singleton to the right of this first point. Then, $p_1'\otimes p_2'$ is a partition in nest decomposed form fulfilling the conditions (i) and (ii). By Case 3a, we infer $p_1'\otimes p_2'\in\DD$. Since $\positionerwbwb\in\DD$, we may use Lemma \ref{LemPartRole} to infer that $p_1\otimes\singletonb\otimes\singletonw\otimes p_2$ is in $\DD$, which yields $p\in\DD$. We proceed in the same way, if the first and the last point of $p_2$ are white.
\end{proof}

\subsection{Classification in the case $\SSS$}

\begin{thm}\label{ThmCaseS}
Let $\CC\subset NC^{\twocol}$ be a category of noncrossing partitions in case $\SSS$. Then $\CC$ coincides with one of the following categories.
\begin{itemize}
\item[(i)] If $\CC$ is globally colorized, then $\CC= \categ{\SSS}{\glob}{k}=\langle \singletonw^{\otimes k},\vierpartwbwb,\singletonw\otimes\singletonb,\paarpartww\otimes\paarpartbb\rangle$ for $k=k(\CC)\in \N_0$. Here $\categ{\SSS}{\glob}{k}:=\categ{\SSS}{\loc}{k,1}$.
\item[(ii)] If $\CC$ is locally colorized, then $\CC=\langle \singletonw^{\otimes k},\positionerd,\vierpartwbwb,\singletonw\otimes\singletonb\rangle$ for $k=k(\CC)\in\N_0\backslash\{1\}$ and $d=d(\CC)\in\N_0\backslash\{1\}$.
\end{itemize}
\end{thm}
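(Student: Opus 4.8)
The plan is to prove both equalities by the double-inclusion scheme already used in cases $\OOO$ and $\HHH$: first put all the listed generators into $\CC$, then show that $\CC$ sits inside the explicit set $\categ{\SSS}{\loc}{k,d}$ of Proposition~\ref{DescriptionCaseSLocal}, and finally quote that proposition to get the explicit set back inside the category generated by the sample partitions, so that all three coincide. Write $k=k(\CC)$, $d=d(\CC)$, and use the conventions $\singletonw^{\otimes 0}=\emptyset$ and $\positionerd=\paarpartwb$ for $d=0$.

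\textbf{Global case (i).} First I would note that $\vierpartwbwb,\singletonw\otimes\singletonb\in\CC$ because $\CC$ is in case $\SSS$, that $\paarpartww\otimes\paarpartbb\in\CC$ by global colorization, and that $\singletonw^{\otimes k}\in\CC$ by Lemma~\ref{LemSamplePartitionsS}(a); hence $\langle\singletonw^{\otimes k},\vierpartwbwb,\singletonw\otimes\singletonb,\paarpartww\otimes\paarpartbb\rangle\subseteq\CC$. For the converse, take $p\in\CC$. Then $c(p)\in k\Z$ by Proposition~\ref{LemKZ}, and $d(\CC)=1$ by Proposition~\ref{LemRDCaseS}(c), so all three local conditions defining $\categ{\SSS}{\loc}{k,1}$ reduce to ``$c(p_1)\in\Z$'' and hold automatically; thus $\CC\subseteq\categ{\SSS}{\loc}{k,1}=\categ{\SSS}{\glob}{k}$. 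Finally, Proposition~\ref{DescriptionCaseSLocal} with $d=1$ (so that the generator $\positionerd$ is $\positionerwwbb$) gives $\categ{\SSS}{\loc}{k,1}\subseteq\langle\singletonw^{\otimes k},\positionerwwbb,\vierpartwbwb,\singletonw\otimes\singletonb\rangle$, and since the category $\langle\singletonw^{\otimes k},\vierpartwbwb,\singletonw\otimes\singletonb,\paarpartww\otimes\paarpartbb\rangle$ is again globally colorized and in case $\SSS$, Proposition~\ref{LemRDCaseS}(c) puts $\positionerwwbb$ into it. Combining these inclusions forces equality throughout, which is assertion~(i).

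\textbf{Local case (ii).} Here I would start from Proposition~\ref{LemRDCaseS}(d), which gives $k,d\in\N_0\backslash\{1\}$ and
\[K^{\CC}(\cutpaarpartwb)=K^{\CC}(\cutpaarpartbw)=d\Z,\qquad K^{\CC}(\cutpaarpartbb)=-K^{\CC}(\cutpaarpartww)=d\Z+1,\]
while Proposition~\ref{LemDDivisor} gives $d\mid k$ when $k\neq0$, so $\categ{\SSS}{\loc}{k,d}$ makes sense. As before, $\vierpartwbwb,\singletonw\otimes\singletonb\in\CC$ by case $\SSS$, $\singletonw^{\otimes k}\in\CC$ by Lemma~\ref{LemSamplePartitionsS}(a), and $\positionerd\in\CC$ by Lemma~\ref{LemSamplePartitionsS}(b), so $\langle\singletonw^{\otimes k},\positionerd,\vierpartwbwb,\singletonw\otimes\singletonb\rangle\subseteq\CC$. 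Conversely, for $p\in\CC$ condition~(i) of $\categ{\SSS}{\loc}{k,d}$ is $c(p)\in k\Z$, which is Proposition~\ref{LemKZ}. For condition~(ii), I would take a rotation of $p$ in nest decomposed form $p_1\otimes p_2$; it still lies in $\CC$, hence in $NDF^{\CC}(\cutpaarpartwb)$ or $NDF^{\CC}(\cutpaarpartbw)$ if the outer points of $p_2$ have inverse colors, in $NDF^{\CC}(\cutpaarpartbb)$ if they are both black, and in $NDF^{\CC}(\cutpaarpartww)$ if they are both white; reading off $c(p_1)$ from the displayed identities gives exactly $c(p_1)\in d\Z$, $c(p_1)\in d\Z+1$, and $-c(p_1)\in d\Z+1$ respectively, which are the three clauses of condition~(ii). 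Hence $\CC\subseteq\categ{\SSS}{\loc}{k,d}$, and Proposition~\ref{DescriptionCaseSLocal} gives $\categ{\SSS}{\loc}{k,d}\subseteq\langle\singletonw^{\otimes k},\positionerd,\vierpartwbwb,\singletonw\otimes\singletonb\rangle$, so again all three are equal.

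\textbf{Where the difficulty is.} The genuinely hard part -- the inductive argument showing that every partition with the prescribed $c$-values is built from the four sample partitions -- is already done inside Proposition~\ref{DescriptionCaseSLocal}, so what is left here is essentially bookkeeping. The point that needs care is the four-way matching, according to the colors of the two outer legs of $p_2$, between the ad hoc clauses defining $\categ{\SSS}{\loc}{k,d}$ and the sets $K^{\CC}(\cutpaarpartww),K^{\CC}(\cutpaarpartwb),K^{\CC}(\cutpaarpartbw),K^{\CC}(\cutpaarpartbb)$ from Proposition~\ref{LemRDCaseS}; one must also notice that in the global case $d=1$ makes the local conditions vacuous and that $\positionerwwbb$ (which is $\positionerd$ at $d=1$) is produced for free from $\paarpartww\otimes\paarpartbb$ in case $\SSS$, and keep the degenerate conventions $k=0$, $d=0$ in mind so the generating sets are literally correct there.
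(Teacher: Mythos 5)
Your proposal is correct and follows the same sandwich scheme as the paper: put the sample partitions into $\CC$ via Lemma \ref{LemSamplePartitionsS} and Proposition \ref{LemRDCaseS}, show $\CC\subseteq\categ{\SSS}{\loc}{k,d}$, and close the circle with Proposition \ref{DescriptionCaseSLocal}. The one place where you diverge is the verification of the unicolored clauses of condition (ii) in the locally colorized case: the paper re-derives ``both endpoints black $\Rightarrow c(p_1)-1\in d\Z$'' by hand, using $\positionerwbwb$ to shift a white singleton past the first leg of $p_2$, with separate treatment of $c(p_1)\geq 1$, $=0$ and $<0$, whereas you simply observe that the rotated nest decomposed version lies in $NDF^{\CC}(\cutpaarpartbb)$ and read off $c(p_1)\in K^{\CC}(\cutpaarpartbb)=d\Z+1$ from Proposition \ref{LemRDCaseS}(d). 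That shortcut is legitimate (the work is already contained in Lemma \ref{Lem213NEU}(c), via $r=0$ since $\positionerwbwb\in\CC$) and makes the proof shorter; the paper's version is merely more self-contained at this point. Your explicit justification of the equality $\langle\singletonw^{\otimes k},\positionerwwbb,\vierpartwbwb,\singletonw\otimes\singletonb\rangle=\langle\singletonw^{\otimes k},\vierpartwbwb,\singletonw\otimes\singletonb,\paarpartww\otimes\paarpartbb\rangle$ in the global case, which the paper only asserts, is also correct and welcome.
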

\begin{proof}
(i) By Lemma \ref{LemSamplePartitionsS}, we have $\singletonw^{\otimes k}\in\CC$ for $k=k(\CC)$. Thus $\langle \singletonw^{\otimes k},\vierpartwbwb,\singletonw\otimes\singletonb,\paarpartww\otimes\paarpartbb\rangle\subset\CC$. 
Conversely, let $p\in\CC$. Then $c(p)\in k\Z$ by Proposition \ref{LemKZ}. Furthermore, $d=d(\CC)=1$ by Proposition \ref{LemRDCaseS}, hence $p\in\categ{\SSS}{\loc}{k,1}\subset\langle \singletonw^{\otimes k}, \positionerwwbb,\vierpartwbwb,\singletonw\otimes\singletonb\rangle$ by Proposition \ref{DescriptionCaseSLocal}.
We have $\langle \singletonw^{\otimes k}, \positionerwwbb,\vierpartwbwb,\singletonw\otimes\singletonb\rangle=
\langle \singletonw^{\otimes k},\vierpartwbwb,\singletonw\otimes\singletonb,\paarpartww\otimes\paarpartbb\rangle$.

(ii) Let $\CC$ be locally colorized. Using Lemma \ref{LemSamplePartitionsS}, we infer $\langle \singletonw^{\otimes k},\positionerd,\vierpartwbwb,\singletonw\otimes\singletonb\rangle\subset\CC$ for $k=k(\CC)$ and $d=d(\CC)$. Conversely, let $p\in\CC$. Then $c(p)\in k\Z$ by Proposition \ref{LemKZ}. Let $p_1\otimes p_2$ be a rotated version of $p$ in nest decomposed form. If the first and the last point of $p_2$ have inverse colors, then $c(p_1)\in d\Z$ by Proposition \ref{LemDDivisor}. 
If the first and the last point of $p_2$ both are black, we have to prove  $s-1\in d\Z$ for $s:=c(p_1)$.

Assume that $s\geq 0$. Using Lemma \ref{PropCategOperations}(b)  and Lemma \ref{LemPartRole}, we infer $\singletonw^{\otimes s}\otimes p_2\in\CC$. We have $s\neq 0$, since otherwise $p_2\in\CC$ and rotation would yield a partition such that two neighbouring points have the same color and belong to the same block. By Lemma \ref{LemBsInC}(c) and Proposition \ref{LemRDCaseS} this would lead to a contradiction. 

We thus have $s\geq 1$.
Since $\positionerwbwb\in\CC$ by Proposition \ref{LemRDCaseS}, we may shift one of the white singletons of $p_1'$ to the right hand side of the first point of $p_2$, which inverts the colors of these two points. We infer that the partition $\singletonw^{\otimes s-1}\otimes p_2'$ is in $\CC$, where $p_2'$ is in nest decomposed form such that the first and the last point have inverse colors. By Proposition \ref{LemDDivisor}, we thus have $s-1\in d\Z$.

As for $s<0$, we have $\singletonb^{\otimes -s}\otimes p_2\in\CC$. By Lemma \ref{PropCategOperations}(d), we infer that also $\singletonb^{\otimes -s}\otimes\singletonb\otimes\singletonw\otimes p_2\in\CC$. Again, shifting the white singleton to the right hand side of the first point of $p_2$ yields $\singletonb^{\otimes -s+1}\otimes p_2'\in\CC$ where the first and the last point of $p_2'$ have inverse colors belonging to the same block. Thus, $-s+1\in d\Z$ and hence $s-1\in d\Z$. 

A similar proof shows that $s+1\in d\Z$ if the first and the last point of $p_2$ are white. We thus have $p\in \categ{\SSS}{\loc}{k,d}$ and by Proposition \ref{DescriptionCaseSLocal}, we deduce  $p\in\langle \singletonw^{\otimes k},\positionerd,\vierpartwbwb,\singletonw\otimes\singletonb\rangle$. This shows $\CC=\langle \singletonw^{\otimes k},\positionerd,\vierpartwbwb,\singletonw\otimes\singletonb\rangle$.
\end{proof}

\begin{cor}
We have
$\categ{\SSS}{\loc}{k,d}= \langle \singletonw^{\otimes k}, \positionerd,\vierpartwbwb,\singletonw\otimes\singletonb\rangle$ in Proposition \ref{DescriptionCaseSLocal}. In particular, all these categories are pairwise different.
\end{cor}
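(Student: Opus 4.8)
The plan is to argue along the lines of the corollary following Theorem~\ref{ThmCaseH}. Fix an admissible parameter pair, i.e.\ $k,d\in\N_0$ with $d\mid k$ when $k\neq 0$, and write $\DD:=\langle\singletonw^{\otimes k},\positionerd,\vierpartwbwb,\singletonw\otimes\singletonb\rangle$. One inclusion, $\categ{\SSS}{\loc}{k,d}\subset\DD$, is exactly Proposition~\ref{DescriptionCaseSLocal}. For the reverse inclusion I would check directly that $\categ{\SSS}{\loc}{k,d}$ is a category of partitions. It contains $\paarpartwb$, $\paarpartbw$ and the two identity partitions (each of $c$-value $0$ and admitting no nest decomposed form, so (i) and (ii) hold trivially), and it contains the four generators: for $\singletonw^{\otimes k}$ this is condition~(i), for $\positionerd$ it is the observation that its only nest decomposed form has $c(p_1)=d\in d\Z$, and for $\vierpartwbwb$ and $\singletonw\otimes\singletonb$ it is a short check against (i) and (ii). Once closure of $\categ{\SSS}{\loc}{k,d}$ under tensor product, composition and involution is established, minimality of $\DD$ forces $\DD\subset\categ{\SSS}{\loc}{k,d}$, hence $\DD=\categ{\SSS}{\loc}{k,d}$.

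It then remains to see the sets $\categ{\SSS}{\loc}{k,d}$ are pairwise distinct, which is immediate from conditions~(i) and~(ii): if $k\neq k'$, an appropriate tensor power of $\singletonw$ (its $c$-value divisible by one of $k,k'$ but not the other) lies in exactly one of the two sets; and if $k=k'$ but $d\neq d'$, a positioner partition with $\min(d,d')$ interior white singletons — or with $\max(d,d')$ of them, should $\min(d,d')=0$ — lies in exactly one of them by the inverse-colored instance of~(ii). As a byproduct of the equality above one also reads the parameters off the definition: $k(\categ{\SSS}{\loc}{k,d})=k$ from~(i) and $\singletonw^{\otimes k}\in\categ{\SSS}{\loc}{k,d}$, and $d(\categ{\SSS}{\loc}{k,d})=d$ from Lemma~\ref{LemBsInC}(a), $\positionerd\in\categ{\SSS}{\loc}{k,d}$ and~(ii); in particular $\paarpartww\otimes\paarpartbb\in\categ{\SSS}{\loc}{k,d}$ only for $d=1$, so these categories realize precisely the list in Theorem~\ref{ThmCaseS}.

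The main obstacle is the closure of $\categ{\SSS}{\loc}{k,d}$ under tensor product and composition; involution is straightforward, since by Lemma~\ref{LemC}(c) the value $c$ changes sign and the nest decomposed forms of $p^{*}$ transform accordingly, and condition~(i) is harmless by Lemma~\ref{LemC}. The substance is condition~(ii): one must verify that every rotated nest decomposed form $p_1\otimes p_2$ of $p\otimes q$, or of a composite, is assembled from nest decomposed forms of $p$ and of $q$ in such a way that $c(p_1)$ lands in the residue class modulo $d$ prescribed by the colors of the two matched endpoints of $p_2$. Noncrossingness is what makes this work: the segment strictly between two matched legs is a concatenation of complete blocks lying wholly in the $p$-part or wholly in the $q$-part, so $c(p_1)$ splits as a sum of contributions each already constrained by~(ii) for $p$ or for $q$; and the swap move furnished by $\positionerwbwb\in\categ{\SSS}{\loc}{k,d}$ (moving a singleton past an inverse-colored neighbour, inverting both colors) is what reconciles the two ``equal endpoints'' cases of~(ii) with the ``inverse endpoints'' case governed by Proposition~\ref{LemDDivisor}. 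Everything beyond this is bookkeeping; alternatively one can copy the proof of the corollary after Theorem~\ref{ThmCaseH} verbatim, invoking the sandwich $\langle\singletonw^{\otimes k},\positionerd,\vierpartwbwb,\singletonw\otimes\singletonb\rangle\subset\CC\subset\categ{\SSS}{\loc}{k,d}\subset\langle\singletonw^{\otimes k},\positionerd,\vierpartwbwb,\singletonw\otimes\singletonb\rangle$ from the proof of Theorem~\ref{ThmCaseS}(ii), at the cost of separately pinning down $d(\CC)=d$ rather than a proper divisor — which is the same delicate point.
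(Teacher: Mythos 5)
Your fallback route is essentially the paper's own argument: as for the analogous corollary in case $\HHH$, the intended proof combines the sandwich $\langle \singletonw^{\otimes k}, \positionerd,\vierpartwbwb,\singletonw\otimes\singletonb\rangle\subset \CC\subset \categ{\SSS}{\loc}{k,d}$ established in the proof of Theorem \ref{ThmCaseS}(ii) with the reverse inclusion $\categ{\SSS}{\loc}{k,d}\subset\langle \singletonw^{\otimes k}, \positionerd,\vierpartwbwb,\singletonw\otimes\singletonb\rangle$ of Proposition \ref{DescriptionCaseSLocal}. Your primary route --- verifying directly that $\categ{\SSS}{\loc}{k,d}$ is a category of partitions containing the four generators --- is genuinely different and, if carried out, more self-contained. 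You have also correctly isolated the one point on which both routes hinge: one must know that the generated category has parameters \emph{exactly} $(k,d)$ rather than proper divisors, equivalently that it is contained in $\categ{\SSS}{\loc}{k,d}$. Condition (i) is indeed harmless by Lemma \ref{LemC}, and your treatment of the generators and of pairwise distinctness is fine.

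The remaining step, however, is not bookkeeping --- it is the entire content of the corollary beyond Theorem \ref{ThmCaseS} --- and your sketch of it does not go through as written. First, the argument that "the segment between two matched legs is a concatenation of complete blocks lying wholly in the $p$-part or wholly in the $q$-part" only covers tensor products. Under composition the blocks of the two factors are merged along the middle row, so a nest decomposed form of $pq$ is in general bounded by two legs originating from \emph{different} blocks of $p$ and of $q$, and the intervening segment is not inherited from either factor; one has to follow the merged block through the middle line, sum the local contributions prescribed by condition (ii) for $p$ and for $q$, use $d\mid k$ to absorb whole copies of a factor, and check that the $\pm 1$ shifts in the two unicolored-endpoint cases combine consistently. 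Second, invoking "the swap move furnished by $\positionerwbwb\in\categ{\SSS}{\loc}{k,d}$" is circular at this stage: letting a member of the set act on another by composition presupposes exactly the closure under composition you are trying to prove, so this must be replaced by a direct combinatorial comparison of the three residue classes in (ii). Until that verification (or an equivalent computation showing $d(\langle \singletonw^{\otimes k}, \positionerd,\vierpartwbwb,\singletonw\otimes\singletonb\rangle)=d$ on the nose) is written out, the proof is incomplete at precisely the point you yourself flag as delicate.
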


\begin{rem}
 The non-colored case $\langle\singleton\otimes\singleton,\vierpart\rangle$ is obtained from $\langle \singletonw^{\otimes k},\vierpartwbwb,\singletonw\otimes\singletonb,\paarpartww\otimes\paarpartbb\rangle$ for $k=2$, whereas $\langle\singleton,\vierpart\rangle$ is the case $k=1$ (see Proposition \ref{PropOneColored}).
\end{rem}

\section{Case $\BBB$}\label{SectCaseB}

Finally, we turn to the case $\BBB$, i.e. to categories $\CC\subset NC^{\twocol}$ such that $\vierpartwbwb\notin\CC$ and $\singletonw\otimes\singletonb\in\CC$. Here, all blocks of partitions $p\in\CC$ have length one or two (Lemma \ref{LemCases}). Like in the non-colored case, this is the most complicated situation, as we can already see when investigating which parameters can occur.

\subsection{Determining the parameters}

\begin{prop}\label{LemRDCaseB}
Let $\CC\subset NC^{\twocol}$ be a category of noncrossing partitions in case $\BBB$. 
\begin{itemize}
\item[(a)] If $\CC$ is globally colorized, then the cases $d(\CC)=1$ and $d(\CC)=2$ can occur.
\item[(b)] If $\CC$ is locally colorized, then
 \begin{itemize}
 \item[(i)] either $K^{\CC}(\cutpaarpartbb)=\emptyset$ and $k(\CC),d(\CC)\in\N_0$,
 \item[(ii)] or we have:
 \[K^{\CC}(\cutpaarpartwb)=K^{\CC}(\cutpaarpartbw)=d\Z \quad\textnormal{and}\quad K^{\CC}(\cutpaarpartbb)= -K^{\CC}(\cutpaarpartww)=d\Z+(r+1)\]
Here,  $r:=r(\CC):=\min\{t\geq 1\;|\; t\in K^{\CC}(\cutpaarpartbb)\}-1$ and $k(\CC)\in\N_0\backslash\{1\}$, $d=d(\CC)\in\N_0\backslash\{1\}$. Furthermore, $r\neq 1$ and $r\in\{0,\frac{d}{2}\}\cap\N_0$.
 \end{itemize}
\end{itemize}
\end{prop}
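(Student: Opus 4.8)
The plan is to analyze the two colorization regimes separately, relying heavily on the structural lemmas already established. For part (a), the globally colorized case, I would invoke Lemma \ref{LemRDGlobal}: since $\CC$ is globally colorized, $d(\CC)\in\{1,2\}$ with $d(\CC)=1$ exactly when $\positionerwwbb\in\CC$. So the only thing left is to exhibit examples showing both values actually occur in case $\BBB$. For $d(\CC)=2$, take $\CC=\langle\paarpartww\otimes\paarpartbb\rangle$ (or even the smaller globally colorized category generated by nothing beyond the always-present generators together with $\paarpartww\otimes\paarpartbb$): one checks $\singletonw\otimes\singletonb\notin\CC$ (all blocks have size two) so we are in case $\BBB$, and $\positionerwwbb\notin\CC$ since that partition contains a singleton, forcing $d(\CC)=2$. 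For $d(\CC)=1$, take $\CC=\langle\singletonw\otimes\singletonb,\paarpartww\otimes\paarpartbb\rangle$; here $\singletonw\otimes\singletonb\in\CC$ and $\vierpartwbwb\notin\CC$ (all blocks have size $\leq 2$) so we are in case $\BBB$, while $\positionerwwbb\in\CC$ by Lemma \ref{LemBsInC}(a) applied with the singleton pair available, hence $d(\CC)=1$.

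For part (b), I would first establish the dichotomy (i)/(ii) using Lemma \ref{LemKD}(c) and Lemma \ref{Lem213NEU}. If $K^{\CC}(\cutpaarpartbb)=\emptyset$, we are in subcase (i); since in case $\BBB$ we never have $\vierpartwbwb\in\CC$ and (being locally colorized) never $\paarpartww\otimes\paarpartbb\in\CC$, Lemma \ref{Lem213NEU}(d) applies and $K^{\CC}(\cutpaarpartbb)=K^{\CC}(\cutpaarpartww)=\emptyset$; the values $k(\CC),d(\CC)\in\N_0$ are then unconstrained beyond Proposition \ref{LemDDivisor}. If $K^{\CC}(\cutpaarpartbb)\neq\emptyset$, then since $\paarpartww\otimes\paarpartbb\notin\CC$ and $\vierpartwwbb\notin\CC$ (the latter would yield $\singletonw\otimes\singletonb$ and $\vierpartwbwb$-type connections — more directly, $\vierpartwwbb\in\CC$ with $\singletonw\otimes\singletonb\in\CC$ gives $\paarpartww\otimes\paarpartbb\in\CC$, a contradiction), Lemma \ref{Lem213NEU}(c) applies: $\positionertpluseins\in\CC$ for some $t\in\N_0$, $r$ is the smallest such $t$, $r\neq 1$, and the displayed identities for the $K^{\CC}$-sets hold with this $r+1$. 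Note $r=\min\{t\geq 1 : t\in K^{\CC}(\cutpaarpartbb)\}-1$ matches because $r+1\in K^{\CC}(\cutpaarpartbb)=d\Z+(r+1)$ is the smallest positive element (as $r+1\leq d$, which will follow from the divisibility analysis below). The bound $k(\CC),d(\CC)\neq 1$ comes from Lemma \ref{Lem213NEU}(c) (the argument there, reused: $d=1$ or $k=1$ would force $\positionerwwbb\in\CC$ and then $\paarpartww\otimes\paarpartbb\in\CC$).

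The genuinely new content — and the main obstacle — is the claim $r\in\{0,\tfrac{d}{2}\}\cap\N_0$, i.e. that if $r\neq 0$ then $2r=d$. Here is the approach. Since $r\in\N_0$ with $r+1\in K^{\CC}(\cutpaarpartbb)=d\Z+(r+1)$, I must show $r=0$ or $2r=d$. The key observation is that $NDF^{\CC}(\cutpaarpartbb)\neq\emptyset$ produces, via rotation and Lemma \ref{LemBsInC}(a), a partition of the form $\positioner$-type with a single pair block; concretely $\positionerrpluseins\in\CC$. Applying verticolor reflection to a nest-decomposed witness with $c(p_1)=r+1$ gives a witness in $NDF^{\CC}(\cutpaarpartww)$ with $c(q_1)=-(r+1)$, hence $-(r+1)\in K^{\CC}(\cutpaarpartww)=-K^{\CC}(\cutpaarpartbb)$, so $r+1\in K^{\CC}(\cutpaarpartbb)$ — consistent, no new info. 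Instead, the trick is to \emph{compose two $\cutpaarpartbb$-type nestings} so as to produce a $\cutpaarpartwb$-type nesting. Take a witness $p=p_1\otimes p_2\in NDF^{\CC}(\cutpaarpartbb)$ with $c(p_1)=r+1$; rotate the last (black) leg of $p_2$ around to the far left of $p_1$, turning it white, obtaining (schematically) a new partition whose outer block is now of type $\cutpaarpartwb$ with an inner count of $c(p_1)+1=r+2$ shifted appropriately — tracking the bookkeeping of $c$ via Lemma \ref{LemC}(d) under rotation. More robustly: using $\positionerrpluseins\in\CC$ and $\singletonw\otimes\singletonb\in\CC$, build $\positionertpluseins\in\CC$ for $t=2r+1$ by stacking, or conversely show $2r\in K^{\CC}(\cutpaarpartwb)=d\Z$ by connecting two copies of the $r$-type positioner along a $\baarpartbw$. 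Then $d\mid 2r$. Combined with $0\leq r$ and $r+1\leq d$ (minimality of $r+1$ as the smallest positive element of $d\Z+(r+1)$ forces $r+1<d$ unless $r+1=d$, but then $d\mid r+1$ means $r+1\in d\Z$ so $0\in K^{\CC}(\cutpaarpartbb)$, whence $0\in K^{\CC}(\cutpaarpartww)$ and symmetrizing, $K^{\CC}(\cutpaarpartbb)=d\Z$, i.e. $r+1=d$ is the genuine minimal positive value only if... ), one deduces $2r=0$ or $2r=d$. I expect the delicate part to be the $r+1\leq d$ bound together with carefully excluding $0<2r<d$: one shows $0<2r<d$ would give a positive element of $K^{\CC}(\cutpaarpartwb)=d\Z$ strictly below $d$, contradicting Proposition \ref{LemDDivisor}. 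Writing out this composition diagram cleanly (in the style of the schemes in Lemma \ref{LemPartRole} and Lemma \ref{Lem213NEU}) is where the real work lies.
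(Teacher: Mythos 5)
Your part (b) essentially follows the paper's route: the dichotomy and the $K^{\CC}$-identities come from Lemma \ref{Lem213NEU}, and the crucial step towards $r\in\{0,\frac{d}{2}\}$ is exactly the paper's, namely tensoring $\positionerrpluseins$ with its verticolor-reflected and rotated version and composing along $\baarpartbw$ to obtain $2r\in K^{\CC}(\cutpaarpartwb)=d\Z$. Your finish differs slightly and is valid: since $K^{\CC}(\cutpaarpartbb)=d\Z+(r+1)$ and $r+1$ is by definition its smallest element $\geq 1$, one gets $r+1\leq d$ when $d>0$ (otherwise $r+1-d$ would be a smaller positive element of the same set), hence $0\leq 2r<2d$, and $d\mid 2r$ forces $2r\in\{0,d\}$; the paper reaches the same contradiction constructively, by using $\positionerd$ to shift $d$ singletons into the pair of $\positionerrpluseins$ and exhibiting $r-d+1\in K^{\CC}(\cutpaarpartbb)$. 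Two points need tightening: the exclusion $k(\CC),d(\CC)\neq 1$ is not simply ``the argument of Lemma \ref{Lem213NEU}(c) reused'' --- $\positionerwwbb\in\CC$ alone does not produce $\paarpartww\otimes\paarpartbb$; you must combine it with $\positionerrpluseins$ (shift singletons arbitrarily via Lemma \ref{LemPartRole}(e) to reach $\singletonw^{\otimes r+1}\otimes\singletonb^{\otimes r-1}\otimes\paarpartbb$, then erase). Also, the exploratory ``rotate the last leg'' idea in the middle of your argument leads nowhere, as you yourself note; only the $\baarpartbw$-composition carries the weight.

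Part (a) contains a genuine error. Case $\BBB$ means $\singletonw\otimes\singletonb\in\CC$ \emph{and} $\vierpartwbwb\notin\CC$ (Definition \ref{DefCases}); you have inverted the first condition. Your candidate for $d(\CC)=2$, the category $\langle\paarpartww\otimes\paarpartbb\rangle$, contains no singletons and is therefore in case $\OOO$, not $\BBB$. Your candidate for $d(\CC)=1$, the category $\langle\singletonw\otimes\singletonb,\paarpartww\otimes\paarpartbb\rangle$, is indeed in case $\BBB$, but it equals $\categ{\BBB}{\glob}{0}$, which by Theorem \ref{ThmCaseB}(a) has $d(\CC)=2$; your appeal to Lemma \ref{LemBsInC}(a) to place $\positionerwwbb$ in it is circular, since producing $\positioners$ for $s=1$ from that lemma requires $1\in K^{\CC}(\cutpaarpartwb)$, i.e.\ $d(\CC)=1$, as a hypothesis. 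Correct witnesses would be $\categ{\BBB}{\glob}{0}$ for $d=2$ and $\categ{\BBB'}{\glob}{0}=\langle\positionerwwbb,\singletonw\otimes\singletonb,\paarpartww\otimes\paarpartbb\rangle$ for $d=1$, which are distinct categories by the corollary following Theorem \ref{ThmCaseB}. The paper itself reads assertion (a) merely as $d(\CC)\in\{1,2\}$, which is immediate from Lemma \ref{LemRDGlobal} without exhibiting examples.
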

\begin{proof}
(a) This follows directly from Lemma \ref{LemRDGlobal}.

(b) Let $K^{\CC}(\cutpaarpartbb)\neq\emptyset$. We have  $t+1\in K^{\CC}(\cutpaarpartbb)$ if and only if $\positionertpluseins\in\CC$, by Lemma \ref{LemBsInC}(a). Hence, by Lemma \ref{Lem213NEU}(c) it only remains to prove $r\in\{0,\frac{d}{2}\}\cap\N_0$, $k(\CC)\neq 1$ and $d(\CC)\neq 1$. If $k(\CC)=1$ or $d(\CC)=1$, we would have $\positionerwwbb\in\CC$ by Lemma \ref{LemKD}(a). By Lemma \ref{LemPartRole}(e), this would imply $\singletonw^{\otimes r+1}\otimes\singletonb^{\otimes r-1}\otimes\paarpartbb\in\CC$, a contradiction to $\CC$ being locally colorized.

Finally, let $r\neq 0$. We now prove $r=\frac{d}{2}$. The following two partitions are in $\CC$.
\setlength{\unitlength}{0.5cm}
\begin{center}
 \begin{picture}(16,1.5)
     \put(1.6,0.2){\line(0,1){0.9}}
     \put(4.4,0.2){\line(0,1){0.9}}
     \put(1.6,1.1){\line(1,0){2.8}}
     \put(0.05,-0.05){$^\uparrow$}
     \put(0.2,0.4){\tiny$^{\otimes r+1}$}
     \put(2.05,-0.05){$^\uparrow$}
     \put(2.2,0.4){\tiny$^{\otimes r-1}$}
     \put(0,-0.2){$\circ$}
     \put(1.4,-0.2){$\bullet$}
     \put(2,-0.2){$\bullet$}
     \put(4.2,-0.2){$\bullet$}
     \put(5,0){$\in\CC$}
     \put(7.5,0){\textnormal{and}}
     \put(10.2,0.2){\line(0,1){0.9}}
     \put(12.8,0.2){\line(0,1){0.9}}
     \put(10.2,1.1){\line(1,0){2.6}}
     \put(10.45,-0.05){$^\uparrow$}
     \put(10.6,0.4){\tiny$^{\otimes r+1}$}
     \put(13.05,-0.05){$^\uparrow$}
     \put(13.2,0.4){\tiny$^{\otimes r-1}$}
     \put(10,-0.2){$\circ$}
     \put(10.4,-0.2){$\bullet$}
     \put(12.6,-0.2){$\circ$}
     \put(13,-0.2){$\circ$}
     \put(15,0){$\in\CC$}
 \end{picture}
\end{center}
Forming the tensor product of these two partitions and composing it with  $\baarpartbw$ (Remark \ref{RemComp})  we infer:
\setlength{\unitlength}{0.5cm}
\begin{center}
 \begin{picture}(5,1.5)
     \put(1.4,0.2){\line(0,1){0.9}}
     \put(3.8,0.2){\line(0,1){0.9}}
     \put(1.4,1.1){\line(1,0){2.4}}
     \put(0.05,-0.05){$^\uparrow$}
     \put(0.2,0.4){\tiny$^{\otimes 2r}$}
     \put(1.85,-0.05){$^\uparrow$}
     \put(2,0.4){\tiny$^{\otimes 2r}$}
     \put(0,-0.2){$\circ$}
     \put(1.2,-0.2){$\bullet$}
     \put(1.8,-0.2){$\bullet$}
     \put(3.6,-0.2){$\circ$}
     \put(4.4,0){$\in\CC$}
 \end{picture}
\end{center}
Thus, $2r\in K^{\CC}(\cutpaarpartwb)=d\Z$ and $d\neq 0$. Assume $2r=ds$ for some $s\geq 2$.
 Then $d\leq r$ and $r':=r-d+1$ is a number $0<r'<r+1$. Using the partition $\positionerd\in\CC$ (which is in $\CC$ by Lemma \ref{LemBsInC}), we can shift $d$ of the $r+1$ white singletons of $\positionerrpluseins$ from the outside of the pair to the inside (see Lemma \ref{LemPartRole}(e)). This yields a partition showing that $r'$ is in $K^{\CC}(\cutpaarpartbb)$, in contradiction to the minimality of $r$. We conclude $2r=d$, if $r\neq 0$.
\end{proof}

\subsection{Finding partitions realizing the parameters}

\begin{lem}\label{LemSamplePartitionsB}
Let $\CC\subset NC^{\twocol}$ be a category  in case $\BBB$. 
\begin{itemize}
 \item[(a)] If $k=k(\CC)\neq 0$, then $\singletonw^{\otimes k}\in\CC$.
 \item[(b)] If $d=d(\CC)\neq 0$, then $\positionerd\in\CC$.
 \item[(c)] If $K^{\CC}(\cutpaarpartbb)\neq\emptyset$  then $\positionerrpluseins\in\CC$ for $r=r(\CC)$.
\end{itemize}
\end{lem}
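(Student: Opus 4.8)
The plan is to obtain all three parts as immediate consequences of results already established, exploiting the fact that in case $\BBB$ one always has $\singletonw\otimes\singletonb\in\CC$ by Definition \ref{DefCases}. None of the three statements requires a genuinely new argument; the point is purely to record which earlier lemma does the work.

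For (a) and (b) I would simply quote Lemma \ref{LemKD}(a): its hypothesis is exactly $\singletonw\otimes\singletonb\in\CC$, and its conclusion already contains both $\singletonw^{\otimes k}\in\CC$ for $k=k(\CC)$ and $\positionerd\in\CC$ for $d=d(\CC)$. (The mechanism there, which I would not reproduce in detail, is to take a partition $p_1\otimes p_2$ in nest decomposed form realizing $c(p_1)=d$, reduce $p_1$ to a tensor product of singletons of one colour using Lemma \ref{LemPartRole}(b) together with composition with $\baarpartwb,\baarpartbw$ so that $p_1=\singletonw^{\otimes d}$, and upgrade this to $\positionerd$ via Lemma \ref{LemBsInC}(a); choosing $p_2=\emptyset$ and $c(p_1)=k$ gives $\singletonw^{\otimes k}$.) So for (a) and (b) there is essentially nothing to do beyond citing Lemma \ref{LemKD}(a).

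For (c) the key observation is that $K^{\CC}(\cutpaarpartbb)\neq\emptyset$ makes $r=r(\CC)$ well defined and forces $r+1\in K^{\CC}(\cutpaarpartbb)$ — this is precisely how $r$ is defined, and by Lemmas \ref{Lem213NEU} and \ref{LemRDGlobal} a nonempty $K^{\CC}(\cutpaarpartbb)$ is always a coset $d\Z$ or $d\Z+(r+1)$, hence contains a positive integer. I would then apply Lemma \ref{LemBsInC}(a) with $q=\cutpaarpartbb$ and $s=r+1$. Since $c(\cutpaarpartbb)=-2$, that lemma produces $\singletonw^{\otimes(r+1)}\,a^{\epsilon_1}\,\singletonw^{\otimes(1-r)}\,a^{\epsilon_2}\in\CC$, where $a^{\epsilon_1},a^{\epsilon_2}$ are the two black points forming the block; rewriting $\singletonw^{\otimes(1-r)}=\singletonb^{\otimes(r-1)}$ through the convention of Definition \ref{DefNotationNegativ}, this partition is exactly $\positionerrpluseins$.

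I do not expect a real obstacle: the lemma is a bookkeeping corollary of Lemmas \ref{LemKD} and \ref{LemBsInC}. The only points requiring a little care are, in (c), checking that the partition delivered by Lemma \ref{LemBsInC}(a) coincides with the picture $\positionerrpluseins$ also in the degenerate cases $r=0$ and $r=1$, where the exponent $r-1$ is non-positive and Definition \ref{DefNotationNegativ} must be invoked to interpret the inner singletons, and making sure at the outset that one is entitled to use these lemmas at all, i.e. that $\singletonw\otimes\singletonb\in\CC$ — which is automatic in case $\BBB$.
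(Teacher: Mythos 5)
Your proposal is correct and matches the paper's proof, which is exactly the two citations you give: Lemma \ref{LemKD}(a) for parts (a) and (b) (applicable since $\singletonw\otimes\singletonb\in\CC$ in case $\BBB$), and Lemma \ref{LemBsInC}(a) for part (c). Your extra bookkeeping in (c) — that $r+1\in K^{\CC}(\cutpaarpartbb)$ by the definition of $r=r(\CC)$ (well-defined by Proposition \ref{LemRDCaseB}), that $c(\cutpaarpartbb)=-2$ so the inner exponent is $-(s-2)=1-r$, interpreted via Definition \ref{DefNotationNegativ} — is exactly the verification the paper leaves implicit.
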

\begin{proof}
Lemma \ref{LemKD}(a) and Lemma \ref{LemBsInC}(a).
\end{proof}

\subsection{Description of natural categories}

\begin{prop}\label{DescriptionCaseBLocal}\label{LemBaseCategoryB}
We have the following natural categories in case $\BBB$.
\begin{itemize}
 \item[(a)] The category $\langle\singletonw\otimes\singletonb\rangle$ consists of all noncrossing partitions $p\in NC^{\twocol}$ such that when $p$ is rotated to a partition having no upper points 
 \begin{itemize}
 \item[(i)] all blocks have size one or two,
 \item[(ii)] the blocks of size two connect a black point and a white point,
 \item[(iii)] the number of black singletons and the number of white singletons between two legs of every pair coincide, and on the global level, too.
 \end{itemize}
 \item[(b)] Let $k,d\in\N_0$ be such that $d$ is a divisor of $k$, if $k\neq 0$. Let $r\in\{0,\frac{d}{2}\}\cap \N_0$ with $r\neq 1$. Denote by $\categ{\BBB'}{\loc}{k,d,r}$ the set of all noncrossing partitions $p\in NC^{\twocol}$ such that
 \begin{itemize}
 \item[(i)] all blocks have size one or two,
 \item[(ii)] $c(p)\in k\Z$,
 \item[(iii)] if $p_1\otimes p_2$ is any rotated version of $p$ in nest decomposed form such that the first and the last point of $p_2$ 
   \begin{itemize}
    \item[$\ldots$] have inverse colors, then $c(p_1)\in d\Z$,
    \item[$\ldots$] both are black, then $c(p_1)\in d\Z+r+1$,
    \item[$\ldots$] both are white, then $-c(p_1)\in d\Z+r+1$.
   \end{itemize}
 \end{itemize}
 We have  $\categ{\BBB'}{\loc}{k,d,r}\subset \langle\singletonw^{\otimes k}, \positionerd,\positionerrpluseins,\singletonw\otimes\singletonb\rangle$.
 \item[(c)] Denote by $\categ{\BBB}{\loc}{k,d}$ the set defined as $\categ{\BBB'}{\loc}{k,d,r}$, but with the additional condition that all blocks of $p$ of size two are of the form $\paarpartwb$ or $\paarpartbw$ when being rotated to one line. We then have $\categ{\BBB}{\loc}{k,d}\subset\langle\singletonw^{\otimes k}, \positionerd,\singletonw\otimes\singletonb\rangle$.
\end{itemize}
\end{prop}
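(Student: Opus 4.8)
The plan is to establish the three inclusions in turn, with (b) carrying essentially all the work and (a) and (c) obtained as simplified versions of the same induction.

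\textbf{Part (a).} First I would check that the set $M$ described on the right is a category: closure under tensor product and involution is immediate from the three block rules, and closure under composition only requires tracking the numbers of black and white singletons. Since $M$ contains $\paarpartwb,\paarpartbw,\idpartww,\idpartbb$ and $\singletonw\otimes\singletonb$, this already gives $\langle\singletonw\otimes\singletonb\rangle\subset M$. For the reverse inclusion take $p\in M$, rotate it to have no upper points, and induct on the number of points. If $p$ has no block of size two, then condition (iii) forces $p$ to be a globally balanced tensor word in $\singletonw$ and $\singletonb$, hence obtained from $(\singletonw\otimes\singletonb)^{\otimes n}$ by permuting colours of neighbouring singletons (Lemma \ref{LemPartRole}(b)). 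Otherwise pick an innermost pair; by condition (iii) the points strictly between its legs form a balanced tuple of singletons, so either the two legs are adjacent (and the pair is $\paarpartwb$ or $\paarpartbw$) or two consecutive singletons of inverse colours occur between them. In either case erase those two points (Lemma \ref{RemErasePoints}(b), respectively capping with a rotation of $\singletonw\otimes\singletonb$), apply the induction hypothesis, and reinsert the removed block: by Lemma \ref{PropCategOperations}(d) if it sits between two legs, or by a tensor product followed by a cyclic rotation on the single line if it is at top level. The residual case of a noncrossing inverse-coloured pair partition is handled by $\langle\emptyset\rangle\subset\langle\singletonw\otimes\singletonb\rangle$ via Proposition \ref{DescriptionCaseOGlobal}(a).

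\textbf{Part (b).} Abbreviate $\DD:=\langle\singletonw^{\otimes k},\positionerd,\positionerrpluseins,\singletonw\otimes\singletonb\rangle$; note $\singletonb^{\otimes k}\in\DD$ by verticolor reflection. I would prove $\categ{\BBB'}{\loc}{k,d,r}\subset\DD$ by induction on the number $m(p)$ of size-two blocks of $p$ (rotated to have no upper points). For $m(p)=0$ the argument is exactly that of (a), the surplus $c(p)\in k\Z$ being absorbed by $\singletonw^{\otimes k}$ and $\singletonb^{\otimes k}$. For $m(p)=1$ write $p=p_1\otimes p_2$ in nest decomposed form with $p_2$ the pair together with the singletons between its legs; using the cyclic rotation that exhibits those inside singletons as the "$p_1$" of another nest decomposition, one sees that both the outside part $p_1$ and the inside tuple are constrained by condition (iii) (by $d\Z$, resp. $d\Z+r+1$, according to the colours of the two legs). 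Then, starting from $\paarpartwb$, install the prescribed inside singletons using $\positionerd$ and --- when the two legs have equal colour --- $\positionerrpluseins$, and finally install the outside singletons (balanced part via $\singletonw\otimes\singletonb$, surplus via $\singletonw^{\otimes k}$ or $\singletonb^{\otimes k}$), reducing the white--white case to the black--black case by verticolor reflection. For $m(p)>1$, rotate $p$ to a nest decomposed form $p=p_1\otimes p_2$ with $m(p_2)=1$; if the two ends of $p_2$ have inverse colours then $c(p_1)\in d\Z$, and adjoining correction singletons $\singletonb^{\otimes\cdot}$ to $p_1$ and matching singletons $\singletonw^{\otimes\cdot}$ to $p_2$ as in the proof of Proposition \ref{DescriptionCaseSLocal} splits $p$ into two members of $\categ{\BBB'}{\loc}{k,d,r}$ to which the induction hypothesis and the case $m=1$ apply, after which Lemma \ref{RemErasePoints}(b) recombines them into $p$; if the two ends of $p_2$ have equal colour, one first applies the relation encoded by $\positionerrpluseins$ (as in the proofs of Lemma \ref{Lem213NEU}(c) and Proposition \ref{LemRDCaseB}) to move $r-1$ inside singletons out while inverting one leg's colour, thereby reducing to the inverse-colour situation.

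\textbf{Part (c) and the main obstacle.} The set $\categ{\BBB}{\loc}{k,d}$ is the subset of $\categ{\BBB'}{\loc}{k,d,r}$ in which no size-two block has equal-coloured ends; hence in the induction of part (b) the equal-colour subcases never occur, $\positionerrpluseins$ is never used, and the same argument yields $\categ{\BBB}{\loc}{k,d}\subset\langle\singletonw^{\otimes k},\positionerd,\singletonw\otimes\singletonb\rangle$. The hard part is the base case $m(p)=1$ of (b) together with the equal-colour subcase of its inductive step: both rest on turning the single move supplied by $\positionerrpluseins$ --- shift $r-1$ inside singletons out and flip one leg's colour --- into precisely the right control of residues modulo $d$ and modulo $k$, and on verifying that the auxiliary "correction-singleton" partitions again satisfy condition (iii) \emph{for all} their rotations. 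The constraint $r\in\{0,\tfrac d2\}\cap\N_0$ from Proposition \ref{LemRDCaseB} is exactly what makes this bookkeeping consistent.
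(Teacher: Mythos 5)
Your proposal is correct and follows essentially the same route as the paper: induction on the number of pair blocks, with the base cases built from $\singletonw^{\otimes k}$, $\singletonw\otimes\singletonb$, $\positionerd$ and $\positionerrpluseins$, and part (c) obtained by observing that $\positionerrpluseins$ is only ever invoked for unicolored pairs. The one simplification the paper makes that you could adopt is in the inductive step of (b): since the $m=1$ case already covers \emph{all} colorings of the single pair, one can split $p=p_1\otimes p_2$ into $p_1\otimes\singletonw^{\otimes c(p_2)}$ and $\singletonw^{\otimes -c(p_2)}\otimes p_2$ uniformly, so your extra equal-color subcase (re-applying $\positionerrpluseins$ to flip a leg) is not needed.
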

\begin{proof}
(a) Denote the set of all partitions $p\in NC^{\twocol}$ with (i), (ii) and (iii) by $\mathcal E$. It is easy to see that $\mathcal E$ is a category of partitions containing $\singletonw\otimes\singletonb$. So, we only need to prove $p\in\langle\singletonw\otimes\singletonb\rangle$ for all $p\in\mathcal E$. We do so by induction on the number $m$ of blocks of size two of $p$. If $m=0$, then $p$ consists of $l$ white singletons and $l$ black singletons, for some $l\in\N$. Hence, it is of the form $(\singletonw\otimes\singletonb)^{\otimes l}\in\langle\singletonw\otimes\singletonb\rangle$ up to permutation of colors (see also Lemma \ref{LemPartRole}). If $m=1$, the partition $p$ is of the form $p=XaYa^{-1}$ up to rotation, where $X$ and $Y$ are tensor products of singletons and $a$ and $a^{-1}$ form a pair block $\paarpartwb$ or $\paarpartbw$. By assumption, the number of white singletons in $Y$ and the number of black singletons coincide, hence $Y\in\langle\singletonw\otimes\singletonb\rangle$ by case $m=0$. Likewise $X\in\langle\singletonw\otimes\singletonb\rangle$, by the assumption on the global color distribution of the singletons. We infer $p\in\langle\singletonw\otimes\singletonb\rangle$.

If $m>1$, we can write $p=p_1\otimes p_2$ in nest decomposed form up to rotation, where $p_2$ consists of one pair block and some singletons. Then $p_2\in\langle\singletonw\otimes\singletonb\rangle$ by case $m=1$ and $p_1\in\langle\singletonw\otimes\singletonb\rangle$ by  the induction hypothesis. Thus $p=p_1\otimes p_2\in \langle\singletonw\otimes\singletonb\rangle$.

(b) Let $p\in\categ{\BBB'}{\loc}{k,d,r}$.
Denote the category $\langle\singletonw^{\otimes k}, \positionerd,\positionerrpluseins,\singletonw\otimes\singletonb\rangle$ by $\DD$.
We prove $p\in\DD$ by induction on the number $m$ of blocks of $p$ of size two. 

\emph{Case 1.} Let $m=0$. Up to rotation and permutation of the colors, $p$ is of the form $p=\singletonw^{\otimes ks}\otimes(\singletonw\otimes\singletonb)^{\otimes w}$ for some $w\geq 0$ and $c(p)=ks\in k\Z$. Hence $p\in\langle\singletonw^{\otimes k},\singletonw\otimes\singletonb\rangle\subset\DD$.

\emph{Case 2.} Let $m=1$. Up to rotation, $p$ is of the form $p=p_1\otimes a^{\epsilon_1} p_2^0 a^{\epsilon_2}$ where $a^{\epsilon_1}$ and $a^{\epsilon_2}$ form a pair block, and $p_1$ and $p_2^0$ consist only of singletons respectively.

\emph{Case 2a.} If the pair on $a^{\epsilon_1}$ and $a^{\epsilon_2}$ is of the form $\paarpartwb$ or $\paarpartbw$, then $c(p_1)\in d\Z$. Consider $p_1':=p_1\otimes \singletonw^{\otimes -c(p_1)}$. Then $p_1'\in\DD$ by Case 1 since $c(p_1')=0$. Furthermore, ${p_2^0}':=\singletonw^{\otimes c(p_1)}\otimes p_2^0$ is in $\DD$, again by Case 1 because $c({p_2^0}')=c(p_1)+c(p_2^0)=c(p)\in k\Z$. Therefore, the partition $p_1\otimes \singletonw^{\otimes -c(p_1)}\otimes a^{\epsilon_1}\singletonw^{\otimes c(p_1)} p_2^0 a^{\epsilon_2}$ is in $\DD$. Since $c(p_1)\in d\Z$, we may use the partition $\positionerd\in\DD$ to shift $c(p_1)$ singletons from inside the pair to the outside. Thus, $p_1\otimes \singletonw^{\otimes -c(p_1)}\otimes \singletonw^{\otimes c(p_1)}\otimes a^{\epsilon_1}p_2^0 a^{\epsilon_2}\in\DD$ from which we infer $p\in\DD$ using Lemma \ref{PropCategOperations}(b).

\emph{Case 2b.} If the pair on $a^{\epsilon_1}$ and $a^{\epsilon_2}$ is of the form $\paarpartbb$, then $c(p_1)\in d\Z+(r+1)$. Assume $c(p_1)=ds+r+1$ for some $s\geq 0$. Let $p_1'$ be the partition obtained from $p_1$ by removing $c(p_1)$ white singletons. Then $p_1'\in\DD$ by Case 1 since $c(p_1')=0$. Furthermore, let $p_2'=p_2^0\otimes \singletonw^{\otimes ds}\otimes\singletonw^{\otimes r-1}$. Then $p_2'\in\DD$ since $c(p_2')=c(p_2^0)+ds+(r-1)=c(p)\in k\Z$. Finally, the partition $p_1'\otimes \positionerrpluseins$ is in $\mathcal D$ and hence also $p_1'\otimes \singletonw^{\otimes r+1}a^{\epsilon_1}\singletonb^{\otimes r-1}p_2^0\singletonw^{\otimes ds}\singletonw^{\otimes r-1}a^{\epsilon_2}$ by Lemma \ref{PropCategOperations}(d). Using $\positionerd$ we can shift $ds$ white singletons from inside the pair to the outside. Up to permutation of colors of the singletons and by Lemma \ref{PropCategOperations}(b), this yields $p$ which is hence in $\DD$. We proceed in a similar way for $s<0$ and likewise in the case that $a^{\epsilon_1}$ and $a^{\epsilon_2}$ form a pair $\paarpartww$.

\emph{Case 3.} Let $m>1$. Up to rotation, $p$ is in nest decomposed form $p=p_1\otimes p_2$ such that $p_2$ contains only one block of size two. 
Then, $p_1':=p_1\otimes \singletonw^{\otimes c(p_2)}$ is in $\DD$ by induction hypothesis, since $c(p_1')=c(p)$. Likewise $p_2':=\singletonw^{\otimes -c(p_2)}\otimes p_2$ is in $\DD$ by Case 2. Hence $p_1\otimes \singletonw^{\otimes c(p_2)}\otimes\singletonw^{\otimes -c(p_2)}\otimes p_2\in\DD$ from which we deduce $p\in\DD$.

(c) Note that in the proof of (b) we used the partition $\positionerrpluseins$ only when blocks $\paarpartww$ or $\paarpartbb$ where involved. Hence, $p\in \langle\singletonw^{\otimes k}, \positionerd,\singletonw\otimes\singletonb\rangle$ if  all blocks of $p$ of size two are of the form $\paarpartwb$ or $\paarpartbw$.
\end{proof}

\subsection{Classification in the case $\BBB$}

\begin{thm}\label{ThmCaseB}
Let $\CC\subset NC^{\twocol}$ be a category of noncrossing partitions in case $\BBB$. Then $\CC$ coincides with one of the following categories.
\begin{itemize}
\item[(a)] If $\CC$ is globally colorized and 
 \begin{itemize}
 \item[$\ldots$] if $d(\CC)=2$, then $\CC=\categ{\BBB}{\glob}{k}:= \langle \singletonw^{\otimes k}, \singletonw\otimes\singletonb,\paarpartww\otimes\paarpartbb\rangle$ for $ k=k(\CC)\in 2\N_0$, 
 \item[$\ldots$] if $d(\CC)=1$, then $\CC= \categ{\BBB'}{\glob}{k}=\langle \singletonw^{\otimes k},\positionerwwbb,\singletonw\otimes\singletonb,\paarpartww\otimes\paarpartbb\rangle$ for $k=k(\CC)\in \N_0$.  
 Here, $\categ{\BBB'}{\glob}{k}:=\categ{\BBB'}{\loc}{k,1,0}$.
 \end{itemize}
\item[(b)] If $\CC$ is locally colorized and 
 \begin{itemize}
 \item[$\ldots$] if $K^{\CC}(\cutpaarpartbb)=\emptyset$, then $\CC= \langle\singletonw^{\otimes k}, \positionerd,\singletonw\otimes\singletonb\rangle$ for $k=k(\CC)$ and $d=d(\CC)\in\N_0$,
 \item[$\ldots$] if $K^{\CC}(\cutpaarpartbb)\neq\emptyset$, then $\CC= \langle\singletonw^{\otimes k}, \positionerd,\positionerrpluseins,\singletonw\otimes\singletonb\rangle$ for $k=k(\CC)\in\N_0\backslash\{1\}$, $d=d(\CC)\in\N_0\backslash\{1\}$ and $r=r(\CC)\in\{0,\frac{d}{2}\}\cap\N_0$ and $r\neq 1$.
 \end{itemize}
\end{itemize}
\end{thm}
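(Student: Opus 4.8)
The plan is to follow the strategy of Section~\ref{SectStrategy} literally, exactly as in the proofs of Theorems~\ref{ThmCaseO}, \ref{ThmCaseH} and \ref{ThmCaseS}: the substantial combinatorics has already been carried out in Propositions~\ref{LemKZ}, \ref{LemDDivisor}, \ref{LemRDCaseB}, in Lemma~\ref{LemSamplePartitionsB}, and in Proposition~\ref{DescriptionCaseBLocal}, so the theorem is the assembly of these pieces. In each of the four sub-cases one proves the two inclusions $\langle\text{generators}\rangle\subseteq\CC$ and $\CC\subseteq\langle\text{generators}\rangle$ and then concludes by sandwiching $\CC$ between the appropriate sample set $M$ and $\langle\text{generators}\rangle$. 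The inclusion $\langle\text{generators}\rangle\subseteq\CC$ is uniform: $\singletonw\otimes\singletonb\in\CC$ by the defining hypothesis of case~$\BBB$; $\paarpartww\otimes\paarpartbb\in\CC$ in the globally colorized case by Definition~\ref{DefGlobalColor}; $\singletonw^{\otimes k}\in\CC$ and $\positionerd\in\CC$ for $k=k(\CC)$, $d=d(\CC)$ by Lemma~\ref{LemSamplePartitionsB}(a),(b) (the cases $k=0$ resp.\ $d=0$ being trivial, with $\singletonw^{\otimes 0}=\emptyset$); $\positionerrpluseins\in\CC$ for $r=r(\CC)$ by Lemma~\ref{LemSamplePartitionsB}(c) when $K^{\CC}(\cutpaarpartbb)\neq\emptyset$; and $\positionerwwbb\in\CC$ when $d(\CC)=1$ by Lemma~\ref{LemBsInC}(a).

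For $\CC\subseteq\langle\text{generators}\rangle$ I would fix $p\in\CC$, rotate it to have no upper points, note that all blocks of $p$ have size at most two by Lemma~\ref{LemCases} and that $c(p)\in k\Z$ by Proposition~\ref{LemKZ}, and then feed the parameter information of Proposition~\ref{LemRDCaseB} into the defining conditions of the relevant sample set. If $\CC$ is locally colorized with $K^{\CC}(\cutpaarpartbb)=\emptyset$, then $NDF^{\CC}(\cutpaarpartww)=NDF^{\CC}(\cutpaarpartbb)=\emptyset$ by Proposition~\ref{LemRDCaseB}(b)(i), so (as in the proof of Theorem~\ref{ThmCaseO}(b)) every size-two block of $p$ is bicolored, and every rotated nest decomposed form $p_1\otimes p_2$ of $p$ has $c(p_1)\in d\Z$ by Proposition~\ref{LemDDivisor}; hence $p\in\categ{\BBB}{\loc}{k,d}$ and $p\in\langle\singletonw^{\otimes k},\positionerd,\singletonw\otimes\singletonb\rangle$ by Proposition~\ref{DescriptionCaseBLocal}(c). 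If $\CC$ is locally colorized with $K^{\CC}(\cutpaarpartbb)\neq\emptyset$, then Proposition~\ref{LemRDCaseB}(b)(ii) gives $r=r(\CC)\in\{0,\tfrac{d}{2}\}\cap\N_0$, $r\neq1$, and tells us that a rotated nest decomposed form $p_1\otimes p_2$ of $p$ has $c(p_1)\in d\Z$, $\in d\Z+(r+1)$, or $\in-(d\Z+(r+1))$ according as the endpoints of $p_2$ have inverse colors, are both black, or are both white; so $p\in\categ{\BBB'}{\loc}{k,d,r}$ and $p\in\langle\singletonw^{\otimes k},\positionerd,\positionerrpluseins,\singletonw\otimes\singletonb\rangle$ by Proposition~\ref{DescriptionCaseBLocal}(b). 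If $\CC$ is globally colorized with $d(\CC)=1$, then the local conditions of $\categ{\BBB'}{\loc}{k,1,0}$ are vacuous, so $p\in\categ{\BBB'}{\loc}{k,1,0}\subseteq\langle\singletonw^{\otimes k},\positionerwwbb,\singletonw\otimes\singletonb\rangle\subseteq\langle\singletonw^{\otimes k},\positionerwwbb,\singletonw\otimes\singletonb,\paarpartww\otimes\paarpartbb\rangle$ by Proposition~\ref{DescriptionCaseBLocal}(b) (using $\positionerd=\positionerrpluseins=\positionerwwbb$ when $d=1$, $r=0$).

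The step I expect to be the real obstacle is the inclusion $\CC\subseteq\langle\text{generators}\rangle$ in the globally colorized case with $d(\CC)=2$ (where $k=k(\CC)\in2\N_0$ since $d\mid k$ by Proposition~\ref{LemDDivisor}): here the target $\categ{\BBB}{\glob}{k}=\langle\singletonw^{\otimes k},\singletonw\otimes\singletonb,\paarpartww\otimes\paarpartbb\rangle$ is not literally one of the sample sets of Proposition~\ref{DescriptionCaseBLocal}, because it contains the unicolored $\paarpartww\otimes\paarpartbb$ whereas $\categ{\BBB}{\loc}{k,2}$ consists only of partitions whose size-two blocks are bicolored. To bridge this I would use that $\paarpartww\otimes\paarpartbb$ makes the generated category closed under arbitrary permutations of colors (Lemma~\ref{LemPartRole}(a)); since the generator $\positionerd$ of Proposition~\ref{DescriptionCaseBLocal}(c) for $d=2$ is a permutation of colors of a partition assembled from copies of $\paarpartwb$ and $\singletonw\otimes\singletonb$, this yields $\categ{\BBB}{\loc}{k,2}\subseteq\langle\singletonw^{\otimes k},\singletonw\otimes\singletonb,\paarpartww\otimes\paarpartbb\rangle$. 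Then, for $p\in\CC$, I would record that $c(p_1)\in2\Z$ for every rotated nest decomposed form $p_1\otimes p_2$ (Lemma~\ref{LemRDGlobal}, since all of $K^{\CC}(\cutpaarpartwb)$, $K^{\CC}(\cutpaarpartbw)$, $K^{\CC}(\cutpaarpartww)$, $K^{\CC}(\cutpaarpartbb)$ equal $2\Z$), tensor $p$ with sufficiently many copies of $\singletonw\otimes\singletonb$, and permute colors so that all pair blocks become bicolored --- the evenness of the numbers $c(p_1)$ is exactly what makes this possible while staying inside $\categ{\BBB}{\loc}{k,2}$ --- whence $p$ together with the added singletons lies in the color-permutation-closed generated category, and erasing the added singleton pairs with the pair partitions gives $p\in\langle\singletonw^{\otimes k},\singletonw\otimes\singletonb,\paarpartww\otimes\paarpartbb\rangle$.

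Finally, in each sub-case the two inclusions give $\langle\text{generators}\rangle\subseteq\CC\subseteq\langle\text{generators}\rangle$, hence equality (and in the globally colorized $d=1$ case also the identification $\CC=\categ{\BBB'}{\glob}{k}=\categ{\BBB'}{\loc}{k,1,0}$), and all listed categories are pairwise distinct because already their invariants $k(\CC)$, $d(\CC)$, the emptiness of $K^{\CC}(\cutpaarpartbb)$, and $r(\CC)$ differ. Apart from the globally colorized $d=2$ bridging argument above, the only place needing a little care is keeping the three branches of condition~(iii) of $\categ{\BBB'}{\loc}{k,d,r}$ matched with $K^{\CC}(\cutpaarpartwb)$, $K^{\CC}(\cutpaarpartbb)$ and $K^{\CC}(\cutpaarpartww)$ and applying the constraint $r\in\{0,\tfrac{d}{2}\}$ of Proposition~\ref{LemRDCaseB}(b)(ii); everything else is routine bookkeeping of the kind already performed in Theorem~\ref{ThmCaseS}.
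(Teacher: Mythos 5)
Your proof is correct and follows essentially the same route as the paper: the forward inclusions come from Lemma \ref{LemSamplePartitionsB} (plus Lemma \ref{LemBsInC}(a) and Definition \ref{DefGlobalColor}), the reverse inclusions from feeding Propositions \ref{LemKZ}, \ref{LemDDivisor} and \ref{LemRDCaseB} into the sample sets of Proposition \ref{DescriptionCaseBLocal}, and the delicate globally colorized $d=2$ case is bridged exactly as in the paper, by recoloring so that all pair blocks become bicolored (possible because the numbers of points between the legs of any pair are even) and then using the color-permutation closure granted by $\paarpartww\otimes\paarpartbb$ to recover $p$. The only small slip is that $\positionerrpluseins$ at $r=0$ is $\positionerwbwb$ rather than $\positionerwwbb$; this is harmless since $\positionerwbwb\in\langle\positionerwwbb,\singletonw\otimes\singletonb\rangle$ by arbitrary positioning of singletons (Lemma \ref{LemPartRole}(e)), which is precisely the observation the paper makes at the corresponding point of its own proof.
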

\begin{proof}
(a) We have $\langle \singletonw^{\otimes k}, \singletonw\otimes\singletonb,\paarpartww\otimes\paarpartbb\rangle\subset\CC$  using Lemma \ref{LemSamplePartitionsB}.

\emph{Case 1.} Let $d(\CC)=2$. Let $p\in\CC$ be a partition without upper points. Then $c(p)=ks$ for some $s\in\Z$ by Proposition \ref{LemKZ}. The number of points between two legs of a pair of $p$ is even, because $c(p_1)\in 2\Z$ for all $p_1\otimes p_2$ in nest decomposed form (Lemma \ref{LemRDGlobal}). Consider $p':=p\otimes \singletonw^{\otimes -ks}$.
Let $p''$ be the partition obtained from $p'$ by replacing the colors of the points by the alternating color pattern white-black-white-black-etc. Then, all pair blocks are of the form $\paarpartwb$ or $\paarpartbw$, because there is an even number of points between two legs of a pair. Thus, $p''\in\langle\singletonw\otimes\singletonb\rangle\subset\langle \singletonw^{\otimes k},\singletonw\otimes\singletonb,\paarpartww\otimes\paarpartbb\rangle$ by Proposition \ref{LemBaseCategoryB}. Using permutation of colors, we infer $p'\in\CC$ since $c(p')=c(p'')$. This implies $p'\otimes\singletonw^{\otimes ks}\in\CC$ from which we deduce $p\in\CC$ using Lemma \ref{PropCategOperations}(b).

\emph{Case 2.} If $d(\CC)=1$, we have $\positionerwwbb\in\CC$ by Lemma \ref{LemBsInC}(a). Hence, $\langle \singletonw^{\otimes k},\positionerwwbb,\singletonw\otimes\singletonb,\paarpartww\otimes\paarpartbb\rangle\subset \CC$. 
Conversely, let $p\in\CC$. Then $c(p)\in k\Z$ by Proposition \ref{LemKZ} and thus $p\in \categ{\BBB'}{\loc}{k,1,0}\subset\langle\singletonw^{\otimes k},\positionerwwbb,\positionerwbwb,\singletonw\otimes\singletonb\rangle$ by Proposition \ref{LemBaseCategoryB}. But this category contains $\paarpartww\otimes\paarpartbb$ since we may shift the singletons in $\positionerwbwb$ arbitrarily (using $\positionerwwbb\in\CC$ and Lemma \ref{LemPartRole}(e)), and hence it coincides with $\langle\singletonw^{\otimes k},\positionerwwbb,\singletonw\otimes\singletonb,\paarpartww\otimes\paarpartbb\rangle$.

(b) Let $\CC$ be locally colorized. For $k=k(\CC)$ and $d=d(\CC)$, we have $\singletonw^{\otimes k}\in\CC$ and $\positionerd\in\CC$ by Lemma \ref{LemSamplePartitionsB}.

\emph{Case 1.} Let $K^{\CC}(\cutpaarpartbb)=\emptyset$. Then $\langle\singletonw^{\otimes k}, \positionerd,\singletonw\otimes\singletonb\rangle\subset\CC$. Conversely, let $p\in\CC$. 
Then $c(p)\in k\Z$ by Proposition \ref{LemKZ} and $c(p_1)\in d\Z$ for all $p_1\otimes p_2$ in nest decomposed form by Proposition \ref{LemDDivisor}. Furthermore, all blocks of size two are of the form $\paarpartwb$ or $\paarpartbw$ when being rotated to one line, since $K^{\CC}(\cutpaarpartbb)=K^{\CC}(\cutpaarpartww)=\emptyset$. Thus $p\in \categ{\BBB}{\loc}{k,d}$ which implies $p\in \langle\singletonw^{\otimes k}, \positionerd,\singletonw\otimes\singletonb\rangle$ by Proposition \ref{LemBaseCategoryB}.

\emph{Case 2.} Let $K^{\CC}(\cutpaarpartbb)\neq\emptyset$. Then $\langle\singletonw^{\otimes k}, \positionerd,\positionerrpluseins,\singletonw\otimes\singletonb\rangle\subset\CC$ for $r$ as in Proposition \ref{LemRDCaseB}.  Again, we use Proposition \ref{DescriptionCaseBLocal} to finish the proof.
\end{proof}

\begin{cor}
We have 
$\categ{\BBB'}{\loc}{k,d,r}= \langle\singletonw^{\otimes k}, \positionerd,\positionerrpluseins,\singletonw\otimes\singletonb\rangle$ and 
$\categ{\BBB}{\loc}{k,d}=\langle\singletonw^{\otimes k}, \positionerd,\singletonw\otimes\singletonb\rangle$ in Proposition \ref{DescriptionCaseBLocal}. In particular, these categories are pairwise different.
\end{cor}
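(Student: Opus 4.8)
The final statement follows the same pattern as the corollaries after Theorems \ref{ThmCaseH} and \ref{ThmCaseS}, so my plan is to splice together inclusions that are already available. Proposition \ref{DescriptionCaseBLocal}(b),(c) give
\[
\categ{\BBB'}{\loc}{k,d,r}\subset \langle\singletonw^{\otimes k}, \positionerd,\positionerrpluseins,\singletonw\otimes\singletonb\rangle
\quad\text{and}\quad
\categ{\BBB}{\loc}{k,d}\subset\langle\singletonw^{\otimes k}, \positionerd,\singletonw\otimes\singletonb\rangle ,
\]
so only the reverse inclusions are missing. For these I would note that the proof of Theorem \ref{ThmCaseB}(b) in fact establishes, for \emph{every} locally colorized category $\CC\subset NC^{\twocol}$ in case $\BBB$ with $k=k(\CC)$, $d=d(\CC)$ and $r=r(\CC)$, a chain
\[
\langle\text{the generators above}\rangle\;\subset\;\CC\;\subset\;\categ{\BBB'}{\loc}{k,d,r}
\qquad\bigl(\text{resp. }\langle\ldots\rangle\subset\CC\subset\categ{\BBB}{\loc}{k,d}\ \text{when}\ K^{\CC}(\cutpaarpartbb)=\emptyset\bigr),
\]
the left inclusion coming from Lemma \ref{LemSamplePartitionsB} and the right one from Lemma \ref{LemCases}, Proposition \ref{LemKZ}, Proposition \ref{LemDDivisor} and Proposition \ref{LemRDCaseB}(b). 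Appending Proposition \ref{DescriptionCaseBLocal} to the right end of this chain turns every inclusion into an equality, and in particular $\categ{\BBB'}{\loc}{k,d,r}=\langle\singletonw^{\otimes k}, \positionerd,\positionerrpluseins,\singletonw\otimes\singletonb\rangle$ and $\categ{\BBB}{\loc}{k,d}=\langle\singletonw^{\otimes k}, \positionerd,\singletonw\otimes\singletonb\rangle$.

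For the squeeze to apply to a prescribed admissible triple $(k,d,r)$ (resp. pair $(k,d)$) I need a locally colorized category in case $\BBB$ realizing exactly those parameters, and the most economical way to supply one is to check directly that the sets $\categ{\BBB'}{\loc}{k,d,r}$ and $\categ{\BBB}{\loc}{k,d}$ are themselves categories of partitions. Closure under tensor products is immediate for condition (i), follows from Lemma \ref{LemC}(a) for (ii), and for (iii) uses that every block of a tensor product lies in one factor together with $d\mid k$; closure under involution uses Lemma \ref{LemC}(c) and the symmetry of conditions (i)--(iii) under reversing the point order and inverting all colors (which interchanges the roles of $\cutpaarpartbb$ and $\cutpaarpartww$); the base partitions $\paarpartwb,\paarpartbw,\idpartww,\idpartbb$ admit no nest decomposed form and so satisfy (i)--(iii) vacuously. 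One then verifies that the listed generators lie in the respective set, that $\vierpartwbwb\notin\categ{\BBB'}{\loc}{k,d,r}$ and $\paarpartww\otimes\paarpartbb\notin\categ{\BBB'}{\loc}{k,d,r}$ (here $2\notin d\Z+(r+1)$ precisely because $r\in\{0,\frac d2\}\cap\N_0$, $r\neq1$ and $d\neq1$), so that $\categ{\BBB'}{\loc}{k,d,r}$ is indeed in case $\BBB$ and locally colorized, and that reading the invariants off the defining conditions gives $k(\categ{\BBB'}{\loc}{k,d,r})=k$, $d(\categ{\BBB'}{\loc}{k,d,r})=d$, $r(\categ{\BBB'}{\loc}{k,d,r})=r$; similarly for $\categ{\BBB}{\loc}{k,d}$, whose $K^{\CC}(\cutpaarpartbb)$ is empty because all its size-two blocks are bicolored.

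The pairwise distinctness is then automatic: the family $\categ{\BBB}{\loc}{k,d}$ is separated from the family $\categ{\BBB'}{\loc}{k,d,r}$ by whether $K^{\CC}(\cutpaarpartbb)$ is empty, and within each family the defining tuples are recovered from the category invariants $k(\CC),d(\CC)$ and, in the second family, $r(\CC)$. I expect the one genuinely laborious step to be closure under composition of $\categ{\BBB'}{\loc}{k,d,r}$ and $\categ{\BBB}{\loc}{k,d}$, where one must track how size-two blocks (in particular the bicolored ones, for the $\categ{\BBB}{\loc}{}$ family) and the nested-pair condition (iii) behave as middle points are deleted and blocks merge — the same flavour of careful but routine bookkeeping that already appeared in the ``connecting'' and ``disconnecting'' operations of Lemmas \ref{PropCategOperations} and \ref{LemPartRole}. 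Everything else in the argument is bookkeeping with $c$ and with the Euclidean-algorithm-style lemmas proved earlier.
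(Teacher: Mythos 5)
Your proof is correct and follows the same squeeze as the paper's (unstated) argument: Proposition \ref{DescriptionCaseBLocal} supplies one inclusion, and the chain $\langle\text{generators}\rangle\subset\CC\subset\categ{\BBB'}{\loc}{k,d,r}$ (resp. $\categ{\BBB}{\loc}{k,d}$) from the proof of Theorem \ref{ThmCaseB}(b) supplies the other, forcing equality throughout. The only thing you add is the explicit check that the sets $\categ{\BBB'}{\loc}{k,d,r}$ and $\categ{\BBB}{\loc}{k,d}$ are themselves categories realizing the prescribed parameters --- a point the paper leaves implicit but which is genuinely needed to conclude the equality for \emph{every} admissible tuple rather than only for those arising from some pre-given category --- and your sketch of that verification (including the flagged-but-routine closure under composition, and the computation showing $2\notin d\Z+(r+1)$ so that the set is locally colorized) is sound.
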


\begin{rem}
\begin{itemize}
\item[(a)] If $r=\frac{d}{2}$, then $\positionerrpluseins\in\CC$ implies $\positionerd\in\CC$, see the proof of Proposition \ref{LemRDCaseB}.
\item[(b)] The non-colored case $\langle\singleton\otimes\singleton\rangle$ is obtained from $\langle \singletonw^{\otimes k},\singletonw\otimes\singletonb,\paarpartww\otimes\paarpartbb\rangle$ for $k=2$, whereas  $\langle\singleton\rangle$ is given by $k=1$. The category $\langle\positioner\rangle$ in turn coincides with $\langle \singletonw^{\otimes k},\positionerwwbb,\singletonw\otimes\singletonb,\paarpartww\otimes\paarpartbb\rangle$ for the case $k=2$ (see Proposition \ref{PropOneColored}).
\end{itemize}
\end{rem}

\section{Main result: Summary of the noncrossing case}\label{SectMainResult}

We finally classified all categories $\CC\subset NC^{\twocol}$ of noncrossing (two-colored) partitions. This constitutes the main result of our article. Here is an overview on the results split into the globally colorized case and the locally colorized case. For the convenience of the reader we recall that the definition of a category of partitions may be found in Section \ref{SectCateg}, the cases $\OOO, \HHH, \SSS$ and $\BBB$ are defined in Definition \ref{DefCases}, globally and locally colorization is given in Definition \ref{DefGlobalColor}, the partition $b_k$ is defined in Definition \ref{DefBs} whereas the operation $p\mapsto\tilde p$ is defined in Section \ref{SectOperations}, and the classification theorems are Theorems \ref{ThmCaseO}, \ref{ThmCaseH}, \ref{ThmCaseS}, and \ref{ThmCaseB}. Each of the categories can be described in two ways: firstly by generators and secondly in terms of colorizations, see Section \ref{SectCaseO} to \ref{SectCaseB}.

\begin{thm}\label{ThmClassifGlobal}
Let $\CC\subset NC^{\twocol}$ be a \emph{globally colorized} category of noncrossing partitions. Then it coincides with one of the following categories.
\begin{itemize}
 \item[Case $\OOO$:] $\categ{\OOO}{\glob}{k}=\langle\paarpartww^{\otimes \frac{k}{2}},\paarpartww\otimes\paarpartbb\rangle$ for $k\in 2\N_0$
 \item[Case $\HHH$:] $\categ{\HHH}{\glob}{k}=\langle b_k,\vierpartwbwb,\paarpartww\otimes\paarpartbb\rangle$ for $k\in 2\N_0$
 \item[Case $\SSS$:] $\categ{\SSS}{\glob}{k}=\langle \singletonw^{\otimes k},\vierpartwbwb,\singletonw\otimes\singletonb,\paarpartww\otimes\paarpartbb\rangle$ for $k\in \N_0$
 \item[Case $\BBB$:] $\categ{\BBB}{\glob}{k}=\langle \singletonw^{\otimes k}, \singletonw\otimes\singletonb,\paarpartww\otimes\paarpartbb\rangle$ for $ k\in 2\N_0$ 
 \item[or] $\categ{\BBB'}{\glob}{k}=\langle \singletonw^{\otimes k},\positionerwwbb,\singletonw\otimes\singletonb,\paarpartww\otimes\paarpartbb\rangle$ for $k\in \N_0$
\end{itemize}
\end{thm}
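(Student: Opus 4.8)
The plan is to reduce this summary statement directly to the four case-by-case classification theorems already proved. First I would recall that, by Definition \ref{DefCases}, every category of partitions $\CC\subset P^{\twocol}$ lies in exactly one of the four cases $\OOO$, $\HHH$, $\SSS$, $\BBB$, since these are distinguished precisely by the two independent conditions $\singletonw\otimes\singletonb\in\CC$ or not, and $\vierpartwbwb\in\CC$ or not. Thus a globally colorized $\CC\subset NC^{\twocol}$ falls into one and only one of these cases, and it suffices to read off what the corresponding classification theorem says in the globally colorized subcase.

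Next I would treat the cases in turn. In case $\OOO$, Theorem \ref{ThmCaseO}(a) gives $\CC=\categ{\OOO}{\glob}{k}=\langle\paarpartww^{\otimes k/2},\paarpartww\otimes\paarpartbb\rangle$ with $k=k(\CC)\in 2\N_0$. In case $\HHH$, Theorem \ref{ThmCaseH}(i) gives $\CC=\categ{\HHH}{\glob}{k}=\langle b_k,\vierpartwbwb,\paarpartww\otimes\paarpartbb\rangle$ with $k=k(\CC)\in 2\N_0$. In case $\SSS$, Theorem \ref{ThmCaseS}(i) gives $\CC=\categ{\SSS}{\glob}{k}=\langle \singletonw^{\otimes k},\vierpartwbwb,\singletonw\otimes\singletonb,\paarpartww\otimes\paarpartbb\rangle$ with $k=k(\CC)\in\N_0$. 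Finally, in case $\BBB$, Theorem \ref{ThmCaseB}(a) distinguishes the two possibilities $d(\CC)=2$ and $d(\CC)=1$ (the only ones occurring in the globally colorized case by Proposition \ref{LemRDCaseB}(a) together with Lemma \ref{LemRDGlobal}): the former yields $\CC=\categ{\BBB}{\glob}{k}=\langle \singletonw^{\otimes k}, \singletonw\otimes\singletonb,\paarpartww\otimes\paarpartbb\rangle$ with $k=k(\CC)\in 2\N_0$, the latter yields $\CC=\categ{\BBB'}{\glob}{k}=\langle \singletonw^{\otimes k},\positionerwwbb,\singletonw\otimes\singletonb,\paarpartww\otimes\paarpartbb\rangle$ with $k=k(\CC)\in\N_0$. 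Collecting these five possibilities is exactly the list in the statement.

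Since this is a pure summary, there is essentially no obstacle beyond bookkeeping; the only point requiring a word of care is confirming that in case $\BBB$ no other value of $d(\CC)$ can occur for a globally colorized category, which is where I would cite Proposition \ref{LemRDCaseB}(a) and Lemma \ref{LemRDGlobal} so that the dichotomy $d(\CC)\in\{1,2\}$ is justified before splitting. No further computation is needed.
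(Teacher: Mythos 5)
Your proposal is correct and matches the paper's approach: Theorem \ref{ThmClassifGlobal} is stated as a summary, and its justification is exactly the reduction to the mutually exclusive cases $\OOO,\HHH,\SSS,\BBB$ of Definition \ref{DefCases} followed by reading off the globally colorized parts of Theorems \ref{ThmCaseO}, \ref{ThmCaseH}, \ref{ThmCaseS} and \ref{ThmCaseB}, with Lemma \ref{LemRDGlobal} supplying the dichotomy $d(\CC)\in\{1,2\}$ in case $\BBB$. Nothing further is needed.
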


\begin{thm}\label{ThmClassifLocal}
Let $\CC\subset NC^{\twocol}$ be a \emph{locally colorized} category of noncrossing partitions.Then it coincides with one of the following categories.
\begin{itemize}
 \item[Case $\OOO$:] $\categg{\OOO}{\loc}=\langle\emptyset\rangle$
 \item[Case $\HHH$:] $\categg{\HHH'}{\loc}=\langle\vierpartwbwb\rangle$ 
 \item[or] $\categ{\HHH}{\loc}{k,d}=\langle b_k,b_d\otimes\tilde b_d,\vierpartwwbb,\vierpartwbwb\rangle$ for $k,d\in\N_0\backslash\{1,2\}$, $d\vert k$ 
 \item[Case $\SSS$:] $\categ{\SSS}{\loc}{k,d}=\langle \singletonw^{\otimes k},\positionerd,\vierpartwbwb,\singletonw\otimes\singletonb\rangle$ for $k,d\in\N_0\backslash\{1\}$, $d\vert k$
 \item[Case $\BBB$:] $\categ{\BBB}{\loc}{k,d}=\langle\singletonw^{\otimes k}, \positionerd,\singletonw\otimes\singletonb\rangle$ for $k,d\in \N_0$, $d\vert k$
 \item[or] $\categ{\BBB'}{\loc}{k,d,0}=\langle\singletonw^{\otimes k}, \positionerd,\positionerwbwb,\singletonw\otimes\singletonb\rangle$ for $k,d\in \N_0\backslash\{1\}$, $d\vert k$
 \item[or] $\categ{\BBB'}{\loc}{k,d,\frac{d}{2}}=\langle\singletonw^{\otimes k}, \positionerd,\positionerrpluseins,\singletonw\otimes\singletonb\rangle$ for $k\in \N_0\backslash\{1\}$, $d\in 2\N_0\backslash\{0,2\}$, $d\vert k$ and $r=\frac{d}{2}$.
\end{itemize}
\end{thm}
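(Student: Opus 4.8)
The plan is to treat this statement as a synthesis of the four case-by-case classifications established in Sections~\ref{SectCaseO}--\ref{SectCaseB}, so that the proof consists of a single case distinction followed by citations, together with a little bookkeeping on the parameter ranges. First I would recall Definition~\ref{DefCases}: every category $\CC\subset NC^{\twocol}$ lies in exactly one of the four mutually exclusive and exhaustive cases $\OOO$, $\HHH$, $\SSS$, $\BBB$, determined by the two independent conditions $\singletonw\otimes\singletonb\in\CC$ and $\vierpartwbwb\in\CC$. Since $\CC$ is assumed locally colorized, i.e.\ $\paarpartww\otimes\paarpartbb\notin\CC$ by Definition~\ref{DefGlobalColor}, in each of the four cases it then suffices to quote the ``locally colorized'' half of the corresponding classification theorem.

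Concretely: in case $\OOO$, Theorem~\ref{ThmCaseO}(b) gives at once $\CC=\categg{\OOO}{\loc}=\langle\emptyset\rangle$. In case $\HHH$, Theorem~\ref{ThmCaseH}(ii) yields $\CC=\categg{\HHH'}{\loc}=\langle\vierpartwbwb\rangle$ when $K^{\CC}(\cutpaarpartbb)=\emptyset$, and $\CC=\categ{\HHH}{\loc}{k,d}$ with $k=k(\CC)$, $d=d(\CC)\in\N_0\backslash\{1,2\}$ otherwise, the divisibility $d\mid k$ coming from Proposition~\ref{LemDDivisor}. In case $\SSS$, Theorem~\ref{ThmCaseS}(ii) gives $\CC=\categ{\SSS}{\loc}{k,d}$ with $k,d\in\N_0\backslash\{1\}$ and $d\mid k$, again by Proposition~\ref{LemDDivisor}. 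In case $\BBB$, Theorem~\ref{ThmCaseB}(b) splits on $K^{\CC}(\cutpaarpartbb)$: if it is empty one obtains $\CC=\categ{\BBB}{\loc}{k,d}$ with $k,d\in\N_0$, $d\mid k$; if it is nonempty one obtains $\CC=\categ{\BBB'}{\loc}{k,d,r}$ with $k,d\in\N_0\backslash\{1\}$, $d\mid k$ and $r=r(\CC)$.

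The only step that is not entirely mechanical is repackaging the $\BBB'$-family as the two rows displayed in the theorem. Here I would invoke Proposition~\ref{LemRDCaseB}(b), which pins down $r\in\{0,\frac{d}{2}\}\cap\N_0$ with $r\neq 1$, giving a clean dichotomy: if $r=0$, the generator $\positionerrpluseins$ reduces to $\positionerwbwb$, producing the row $\categ{\BBB'}{\loc}{k,d,0}=\langle\singletonw^{\otimes k},\positionerd,\positionerwbwb,\singletonw\otimes\singletonb\rangle$ for $k,d\in\N_0\backslash\{1\}$; if $r=\frac{d}{2}$, then $d$ must be even, and $r\neq 1$ rules out $d=2$, so $d\in 2\N_0\backslash\{0,2\}$, producing the final row $\categ{\BBB'}{\loc}{k,d,\frac{d}{2}}=\langle\singletonw^{\otimes k},\positionerd,\positionerrpluseins,\singletonw\otimes\singletonb\rangle$. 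I do not expect any genuine obstacle: all the substance lies upstream in the individual case theorems, and the only points to watch are that the ``$K^{\CC}(\cutpaarpartbb)=\emptyset$ versus $\neq\emptyset$'' alternatives in cases $\HHH$ and $\BBB$ are transcribed into the final list without overlap or omission, and that the cosmetic identification of $\positionerrpluseins$ at $r=0$ with $\positionerwbwb$ is recorded.
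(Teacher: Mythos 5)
Your proposal is correct and takes essentially the same route as the paper: Theorem \ref{ThmClassifLocal} is presented there as a summary of Theorems \ref{ThmCaseO}, \ref{ThmCaseH}, \ref{ThmCaseS} and \ref{ThmCaseB} with no separate argument, so the proof really is the case distinction plus citations you describe. Your bookkeeping on the $\BBB'$-family (the dichotomy $r=0$ versus $r=\tfrac{d}{2}$ from Proposition \ref{LemRDCaseB}, the identification of $\positionerrpluseins$ at $r=0$ with $\positionerwbwb$, and $d\in 2\N_0\backslash\{0,2\}$ when $r=\tfrac{d}{2}$) matches the paper's parameter ranges exactly.
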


Here is a graphical overview of all categories of two-colored noncrossing partitions. The single framed categories are the locally colorized ones whose inclusions are indicated by single dashed lines (inclusions from top to bottom and from right to left, for fixed parameters $k$ and $d$). Constraints for inclusions are marked in brackets. The double framed categories are the globally colorized ones  with inclusion pattern according to the double dahed lines. The locally colorized categories are contained in the globally colorized ones according to the diagonal chain lines. In our graphic, we also included a cross marking the areas of the cases $\BBB$, $\OOO$, $\SSS$ and $\HHH$.

\setlength{\unitlength}{0.5cm}
\begin{center}
\begin{picture}(30,27)
\multiput(15.8,11)(0,0.5){12}{\line(0,1){.1}}
\multiput(15.8,19)(0,0.5){12}{\line(0,1){.1}}
\multiput(29.6,11)(0,0.5){28}{\line(0,1){.1}}
\multiput(29.6,7)(0,0.5){4}{\line(0,1){.1}}
\multiput(17,26)(0.5,0){24}{\line(1,0){.1}}
\multiput(17,10)(0.5,0){21}{\line(1,0){.1}}
\put(15.8,14){\tiny{$[r=0]$}}
\put(28.75,25){\frame{$\Aspace\langle\emptyset\rangle\Aspace$}}
\put(27.2, 9){\frame{$\Aspace\langle\vierpartwbwb\rangle\Aspace$}}
\put(21,5){\frame{$\Aspace\langle b_k,b_d\otimes\tilde b_d,\vierpartwwbb,\vierpartwbwb\rangle\Aspace$}}
 \put(21,4.45){\tiny{$k,d\in\N_0\backslash\{1,2\},d\vert k$}}
\put(7.55,25){\frame{$\Aspace\langle\singletonw^{\otimes k}, \positionerd,\singletonw\otimes\singletonb\rangle\Aspace$}}
 \put(7.55, 24.45){\tiny{$k,d\in \N_0,d\vert k$}}
\put(3,17){\frame{$\Aspace\langle\singletonw^{\otimes k}, \positionerd,\positionerrpluseins,\singletonw\otimes\singletonb\rangle\Aspace$}}
 \put(3,16.45){\tiny{$k,d\in \N_0\backslash\{1\},d\vert k,r\in\{0,\frac{d}{2}\}\cap\N_0,r\neq 1$}}
\put(5.2, 9){\frame{$\Aspace\langle\singletonw^{\otimes k},\positionerd,\vierpartwbwb,\singletonw\otimes\singletonb\rangle\Aspace$}}
 \put(5.2, 8.45){\tiny{$k,d\in\N_0\backslash\{1\},d\vert k$}}
\multiput(1,3.5)(0,0.5){18}{\line(0,1){.1}}
 \multiput(1.2,3.5)(0,0.5){18}{\line(0,1){.1}}
\multiput(1,15.5)(0,0.5){10}{\line(0,1){.1}}
 \multiput(1.2,15.5)(0,0.5){10}{\line(0,1){.1}}
\multiput(19,3.5)(0,0.5){34}{\line(0,1){.1}}
 \multiput(19.2,3.5)(0,0.5){34}{\line(0,1){.1}}
\multiput(11,22)(0.5,0){14}{\line(1,0){.1}}
 \multiput(11,22.2)(0.5,0){14}{\line(1,0){.1}}
\multiput(13,2)(0.5,0){10}{\line(1,0){.1}}
 \multiput(13,2.2)(0.5,0){10}{\line(1,0){.1}}
\put(18,21){\frame{${ }_{ }\Bspace$\frame{$\Aspace\langle
\paarpartww^{\otimes \frac{k}{2}},\paarpartww\otimes\paarpartbb
\rangle\Aspace$}$\Bspace$}}
 \put(18,20.45){\tiny{$k\in 2\N_0$}}
\put(18,1){\frame{${ }_{ }\Bspace$\frame{$\Aspace\langle
b_k,\vierpartwbwb,\paarpartww\otimes\paarpartbb
\rangle\Aspace$}$\Bspace$}}
 \put(18,0.45){\tiny{$k\in 2\N_0$}}
\put(0,21){\frame{${ }_{ }\Bspace$\frame{$\Aspace\langle
\singletonw^{\otimes k}, \singletonw\otimes\singletonb,\paarpartww\otimes\paarpartbb
\rangle\Aspace$}$\Bspace$}}%
 \put(0,20.45){\tiny{$k\in 2\N_0$}}
\put(0,13){\frame{${ }_{ }\Bspace$\frame{$\Aspace\langle
\singletonw^{\otimes k},\positionerwwbb,\singletonw\otimes\singletonb,\paarpartww\otimes\paarpartbb
\rangle\Aspace$}$\Bspace$}}
 \put(0,12.45){\tiny{$k\in \N_0$}}
\put(0,1){\frame{${ }_{ }\Bspace$\frame{$\Aspace\langle
\singletonw^{\otimes k},\vierpartwbwb,\singletonw\otimes\singletonb,\paarpartww\otimes\paarpartbb
\rangle\Aspace$}$\Bspace$}}
 \put(0,0.45){\tiny{$k\in \N_0$}}
\multiput(5.6,23.1)(0.1,0.1){20}{\circle{0.2}}
\multiput(1,15.1)(0.1,0.1){20}{\circle{0.2}}
\multiput(26,23.1)(0.15,0.1){20}{\circle{0.2}}
\multiput(19,3)(0.1,0.1){20}{\circle{0.2}}
\multiput(1.2,3)(0.1,0.15){40}{\circle{0.2}}
\multiput(13,3)(0.2,0.05){40}{\circle{0.2}}
\put(4.5,24){\tiny{$[d=0]$}}
\put(20.1,3.5){\tiny{$[d\in 2\N_0]$}}
\put(16.5,12){\line(1,0){2}}
\put(17.5,11){\line(0,1){2}}
\put(17,11.5){\tiny{$\SSS$}}
\put(17,12.1){\tiny{$\BBB$}}
\put(17.6,11.5){\tiny{$\HHH$}}
\put(17.6,12.1){\tiny{$\OOO$}}
\end{picture}
\end{center}

We also give the corresponding graphic in the orthogonal case ($\paarpartww\in\CC$), for comparison (see Proposition \ref{PropOneColored}).

\setlength{\unitlength}{0.5cm}
\begin{center}
\begin{picture}(30,12)
\multiput(1,3.5)(0,0.5){3}{\line(0,1){.1}}
 \multiput(1.2,3.5)(0,0.5){3}{\line(0,1){.1}}
\multiput(1,7.5)(0,0.5){3}{\line(0,1){.1}}
 \multiput(1.2,7.5)(0,0.5){3}{\line(0,1){.1}}
\multiput(19,3.5)(0,0.5){11}{\line(0,1){.1}}
 \multiput(19.2,3.5)(0,0.5){11}{\line(0,1){.1}}
\multiput(12.7,10)(0.5,0){11}{\line(1,0){.1}}
 \multiput(12.7,10.2)(0.5,0){11}{\line(1,0){.1}}
\multiput(16.8,2)(0.5,0){3}{\line(1,0){.1}}
 \multiput(16.8,2.2)(0.5,0){3}{\line(1,0){.1}}
\put(18,9){\frame{${ }_{ }\Bspace$\frame{$\Aspace
(k=2):\langle\emptyset\rangle\Aspace$}$\Bspace$}}
\put(18,1){\frame{${ }_{ }\Bspace$\frame{$\Aspace
(k=2):\langle\vierpart\rangle\Aspace$}$\Bspace$}}
\put(0,9){\frame{${ }_{ }\Bspace$\frame{$\Aspace
(k=1): \langle\singleton\rangle\quad
(k=2):\langle\singleton\otimes\singleton\rangle\Aspace$}$\Bspace$}}%
\put(0,5){\frame{${ }_{ }\Bspace$\frame{$\Aspace 
(k=2): \langle\legpart\rangle\Aspace$}$\Bspace$}}
\put(0,1){\frame{${ }_{ }\Bspace$\frame{$\Aspace
(k=1): \langle\singleton,\vierpart\rangle\quad
(k=2): \langle\singleton\otimes\singleton,\vierpart\rangle\Aspace$}$\Bspace$}}
\put(16.5,4){\line(1,0){2}}
\put(17.5,3){\line(0,1){2}}
\put(17,3.5){\tiny{$\SSS$}}
\put(17,4.1){\tiny{$\BBB$}}
\put(17.6,3.5){\tiny{$\HHH$}}
\put(17.6,4.1){\tiny{$\OOO$}}
\end{picture}
\end{center}

\begin{rem}\label{RemParameters}
The constraints on the parameters $k$, $d$ and $r$ in the above theorems can be understood by the fact that we have the following equalities.
\begin{itemize}
\item $\categ{\HHH}{\glob}{k}=\categ{\HHH}{\loc}{k,2}$ and $\categ{\HHH}{\glob}{2m+1}=\categ{\SSS}{\glob}{2m+1}$
\item $\categ{\SSS}{\glob}{k}=\categ{\SSS}{\loc}{k,1}=\categ{\HHH}{\loc}{k,1}$
\item $\categ{\BBB}{\glob}{k}=\categ{\BBB'}{\loc}{k,2,1}$ and $\categ{\BBB}{\glob}{2m+1}=\categ{\BBB'}{\glob}{2m+1}$
\item $\categ{\BBB'}{\glob}{k}=\categ{\BBB'}{\loc}{k,1,0}=\categ{\BBB'}{\loc}{k,1,1}$
\end{itemize}
\end{rem}

\begin{rem}
In an unpublished draft, Banica, Curran and Speicher \cite{speicherunpublished} already found several of the above categories. We thank them for sending the draft to us.
\end{rem}

\section{The group case}\label{SectGroupCase}

For  categories $\CC\subset P^{\twocol}$ of two-colored partitions, there are two natural extreme cases. The first is the one of noncrossing partitions, completely classified in the preceding sections. The second is the one containing the crossing partitions $\crosspartwwww$, $\crosspartbbbb$, $\crosspartwbbw$ and $\crosspartbwwb$, which allow us to permute the points of a partition in an arbitrary way (without changing their colors). It is easy to see that one of these four partitions is in a category if and only if all are (by verticolor reflection and rotation).

\begin{defn}\label{DefCrossCase}
A category of two colored partitions $\mathcal{C}$ is in the \emph{group case} if one (and hence all) of the  partitions $\crosspartwwww$, $\crosspartbbbb$, $\crosspartbwwb$ and $\crosspartwbbw$  is in $\mathcal{C}$.
\end{defn} 

The name ``group case'' refers to the situation when a quantum group is associated to a category of partitions (see \cite{tarragoweberopalg}). If $\mathcal C$ is in the group case, the associated quantum group is in fact a group.

The classification of all categories in the group case follows directly from the classification of all categories of noncrossing partitions and the following lemma.

\begin{lem}\label{LemCapCateg}
 Let $\CC$ and $\DD$ be categories of two-colored partitions.
\begin{itemize}
 \item[(a)] Then $\CC\cap \DD$ is again a category of partitions.
 \item[(b)] Let $\CC$ be in the group case and put $\CC_0:=\CC\cap NC^{\twocol}$. Then $\CC=\langle\CC_0,\crosspartwwww\rangle$.
\end{itemize}
\end{lem}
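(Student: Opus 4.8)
For part (a), the plan is routine: intersection of two collections each closed under tensor product, composition and involution is again closed under these operations, and the required base partitions $\paarpartwb,\paarpartbw,\idpartww,\idpartbb$ lie in both $\CC$ and $\DD$ (they lie in \emph{every} category), hence in $\CC\cap\DD$. There is nothing to say beyond checking that the defining conditions of a category are each preserved under intersection, which is immediate from the definitions in Section~\ref{SectCateg}.

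For part (b), one inclusion is trivial: $\CC_0\subset\CC$ by definition of $\CC_0$, and $\crosspartwwww\in\CC$ since $\CC$ is in the group case (Definition~\ref{DefCrossCase}); hence $\langle\CC_0,\crosspartwwww\rangle\subset\CC$ because $\CC$ is a category containing both generating sets. The content is the reverse inclusion $\CC\subset\langle\CC_0,\crosspartwwww\rangle$. The idea is that, since $\crosspartwwww$ (and hence all of $\crosspartbbbb,\crosspartwbbw,\crosspartbwwb$, by verticolor reflection and rotation as noted before the lemma) lies in $\langle\CC_0,\crosspartwwww\rangle$, we may permute the points of any partition arbitrarily without changing their colors while staying inside $\langle\CC_0,\crosspartwwww\rangle$; more precisely, for any $p\in\CC$ and any permutation $\sigma$ of its $k+l$ points that fixes the upper/lower split, composing $p$ with suitable tensor products of the four crossing partitions and identity partitions produces the partition $p^\sigma$ obtained from $p$ by applying $\sigma$ to the positions (keeping colors). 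Thus it suffices to show: for every $p\in\CC$ there is a \emph{noncrossing} partition $p_{\mathrm{nc}}\in\CC$ and a point-permutation taking $p_{\mathrm{nc}}$ to $p$. Take $p\in\CC$; list its blocks $V_1,\dots,V_m$ and record, for each point, its color. Since the crossing partitions are available in $\CC$ (group case), we may reorder the points of $p$ by a permutation into the positions of a fixed noncrossing ``model'' partition $p_{\mathrm{nc}}$ with the same block structure (for instance, order the blocks so that they nest without crossing, e.g.\ place each block on a consecutive interval of points); this $p_{\mathrm{nc}}$ is obtained from $p$ by applying the appropriate permutation partition, hence $p_{\mathrm{nc}}\in\CC$, and moreover $p_{\mathrm{nc}}$ is noncrossing, so $p_{\mathrm{nc}}\in\CC\cap NC^{\twocol}=\CC_0$. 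Conversely $p$ is recovered from $p_{\mathrm{nc}}$ by applying the inverse permutation partition, which lies in $\langle\crosspartwwww\rangle\subset\langle\CC_0,\crosspartwwww\rangle$; therefore $p\in\langle\CC_0,\crosspartwwww\rangle$. This gives $\CC\subset\langle\CC_0,\crosspartwwww\rangle$ and completes the proof.

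The main obstacle is to make precise the claim that ``every point-permutation is realized by composing with crossing partitions,'' including the bookkeeping of colors: when one builds the transposition partition swapping two neighbouring points on the lower line one must use the crossing partition whose colors match those two points (among $\crosspartwwww,\crosspartbbbb,\crosspartwbbw,\crosspartbwwb$), tensored with the correctly-colored identity partitions on the remaining points, and one must check the composability conditions (i) and (ii) of Section~\ref{SectOperations} at each step. Since adjacent transpositions generate the full symmetric group and a composition of composable partitions in $\CC$ stays in $\CC$, iterating handles an arbitrary permutation; and because each transposition partition is a rotation/tensor product of one of the four crossing partitions with identities, it lies in $\langle\crosspartwwww\rangle$. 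One should also note that this reordering does not alter the underlying (colored) block structure — it only moves points — so if $p$ is turned into a noncrossing arrangement the result genuinely lies in $NC^{\twocol}$, and the colors are untouched throughout, which is exactly what keeps the argument inside $\CC$ rather than a larger non-colored object.
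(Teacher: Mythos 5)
Your proof is correct and follows essentially the same route as the paper: for (a) the intersection argument is immediate from the definitions, and for (b) you permute the points of $p\in\CC$ via the four crossing partitions into a noncrossing partition lying in $\CC_0$, then undo the permutation inside $\langle\CC_0,\crosspartwwww\rangle$ to recover $p$. The extra detail you supply on realizing arbitrary permutations by adjacent transpositions with color-matched crossing partitions is exactly the bookkeeping the paper leaves implicit.
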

\begin{proof}
(a) This follows directly from the definition of a category.

(b) Let $p\in\CC$. Using the four kinds of crossing partitions of Definition \ref{DefCrossCase}, we may permute the points of $p$ such that we obtain a noncrossing partition $p'$. Since this can be done in $\CC$, we have $p'\in\CC_0\subset\langle\CC_0,\crosspartwwww\rangle$. Thus, we can also reconstruct $p$ in $\langle\CC_0,\crosspartwwww\rangle$ doing all these operations backwards, so $\CC\subset\langle\CC_0,\crosspartwwww\rangle$. We deduce that equality holds.
\end{proof}

For each category of partition $\mathcal{C}$, we put $\mathcal{C}_{\grp}:=\langle \mathcal{C},\crosspartwwww\rangle$. Thus the preceding lemma says that any category of partition in the group case is of the form $\mathcal{C}_{\grp}$ for a category $\CC$ of non-crossing partitions.

\begin{thm}
The categories in the group case are the following.

\begin{itemize}
\item $\categ{\OOO}{\grp,\glob}{k}:=\langle\paarpartww^{\otimes \frac{k}{2}},\paarpartww\otimes\paarpartbb,\crosspartwwww\rangle$ for $k\in 2\N_0$
\item $\categg{\OOO}{\grp,\loc}:=\langle\crosspartwwww\rangle$
\item $\categ{\HHH}{\grp,\glob}{k}:=\langle b_k,\vierpartwbwb,\paarpartww\otimes\paarpartbb,\crosspartwwww\rangle$ for $k\in 2\N_0$
\item $\categ{\HHH}{\grp,\loc}{k,d}:=\langle b_k,b_d\otimes\tilde b_d,\vierpartwbwb,\crosspartwwww\rangle$ for $k,d\in\N_0\backslash\{1,2\}$, $d\vert k$
\item $\categ{\SSS}{\grp,\glob}{k}:=\langle \singletonw^{\otimes k},\vierpartwbwb,\singletonw\otimes\singletonb,\paarpartww\otimes\paarpartbb,\crosspartwwww\rangle$ for $k\in \N_0$
\item $\categ{\BBB}{\grp,\glob}{k}:=\langle\singletonw^{\otimes k}, \singletonw\otimes\singletonb,\paarpartww\otimes\paarpartbb,\crosspartwwww\rangle$ for $k\in 2\N_0$
\item $\categ{\BBB}{\grp,\loc}{k}:=\langle \singletonw^{\otimes k}, \singletonw\otimes\singletonb,\crosspartwwww\rangle$ for $ k\in \N_0$ 
\end{itemize}
\end{thm}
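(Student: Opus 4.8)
The plan is to apply Lemma \ref{LemCapCateg} together with the full classification of noncrossing categories (Theorems \ref{ThmClassifGlobal} and \ref{ThmClassifLocal}). By Lemma \ref{LemCapCateg}(b), every category $\CC$ in the group case is of the form $\CC=\langle\CC_0,\crosspartwwww\rangle=(\CC_0)_{\grp}$ where $\CC_0:=\CC\cap NC^{\twocol}$ is a category of noncrossing partitions; conversely, each $(\DD)_{\grp}$ for $\DD\subset NC^{\twocol}$ is obviously in the group case. So the classification of group-case categories is obtained by running through the list of all noncrossing categories and computing $(\DD)_{\grp}$ for each, then removing coincidences.

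First I would observe that $\crosspartwwww\in\CC$ forces several simplifications. Since we can freely permute points (without changing colors), the distinction between global and local colorization survives only partly: a generator like $\vierpartwwbb$ becomes reachable from $\vierpartwbwb$ once crossings are allowed (permute the two middle points), so $\langle\vierpartwbwb,\crosspartwwww\rangle=\langle\vierpartwwbb,\crosspartwwww\rangle$ and similarly $\positionerd$, $\positionerwbwb$, $\positionerwwbb$ all collapse: in the presence of $\crosspartwwww$, any ``positioner'' partition is generated by $\singletonw\otimes\singletonb$ together with the appropriate unicolored pair or, respectively, becomes trivial — one shows $\langle\positionerd,\crosspartwwww\rangle=\langle b_d\otimes\tilde b_d,\crosspartwwww\rangle$ and $\langle\positionerwbwb,\crosspartwwww\rangle=\langle\singletonw\otimes\singletonb,\crosspartwwww\rangle$ by permuting the relevant legs. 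Likewise $b_d\otimes\tilde b_d$ and $\paarpartww\otimes\paarpartbb$ merge with $b_d$-type generators appropriately. Carrying this out case by case on the two lists yields exactly the seven families in the statement; in particular the $\BBB'$-families and the $\SSS_{\loc}$ family collapse into $\categ{\BBB}{\grp,\loc}{k}$ and $\categ{\SSS}{\grp,\glob}{k}$ (for the latter, note $\langle\singletonw^{\otimes k},\positionerd,\vierpartwbwb,\singletonw\otimes\singletonb,\crosspartwwww\rangle$ already contains $\paarpartww\otimes\paarpartbb$ since with crossings one can build $\vierpartwwbb$ and hence disconnect into $\paarpartww\otimes\singletonb\otimes\singletonb$, forcing global colorization), and the $\HHH_{\glob}$ versus $\HHH_{\loc}$ distinction persists only through the parameter constraints.

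After the collapse, what remains is to check that the surviving seven families are genuinely distinct and that no further coincidences occur. For this I would use the invariants already developed: the case ($\OOO$, $\HHH$, $\SSS$, $\BBB$) in which $\CC_0$ lies, the parameter $k(\CC_0)$, the parameter $d(\CC_0)$, whether $K^{\CC_0}(\cutpaarpartbb)=\emptyset$, and whether $\paarpartww\otimes\paarpartbb\in\CC_0$ (global versus local). Since these invariants of $\CC_0$ are recovered from $\CC=(\CC_0)_{\grp}$ by intersecting back with $NC^{\twocol}$ — one must verify $\bigl((\CC_0)_{\grp}\bigr)\cap NC^{\twocol}=\CC_0$, which follows because any noncrossing partition producible with crossings allowed, where the crossing moves are ``undone'' at the end, already lies in $\CC_0$ — distinctness of the group-case families reduces to distinctness of the corresponding noncrossing families, already established in the corollaries to Theorems \ref{ThmCaseO}–\ref{ThmCaseB}.

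The main obstacle I anticipate is the bookkeeping in the collapse step: one must argue carefully that adding $\crosspartwwww$ really does identify the pairs of noncrossing categories claimed (and no more), i.e. that e.g. $\categ{\HHH}{\grp,\loc}{k,d}$ as written equals $\langle\categ{\HHH}{\loc}{k,d},\crosspartwwww\rangle$ and is not accidentally equal to some $\categ{\HHH}{\grp,\glob}{k'}$. This requires checking, for each generator appearing in the noncrossing lists, precisely which other generators it becomes equivalent to modulo $\crosspartwwww$, and then confirming via the invariants $k,d,K(\cutpaarpartbb)$ and the global/local dichotomy that the resulting seven families are pairwise distinct — essentially a finite but somewhat delicate case analysis rather than a deep argument.
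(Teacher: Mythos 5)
Your overall strategy is exactly the paper's: reduce via Lemma \ref{LemCapCateg}(b) to the noncrossing classification of Theorems \ref{ThmClassifGlobal} and \ref{ThmClassifLocal}, and record which noncrossing categories become identified after adjoining $\crosspartwwww$. Two of your auxiliary claims are false as stated, however. First, $\langle\positionerd,\crosspartwwww\rangle\neq\langle b_d\otimes\tilde b_d,\crosspartwwww\rangle$ for $d\geq 3$: the left-hand side contains $\singletonw\otimes\singletonb$ (disconnect the singletons, Lemma \ref{RemErasePoints}(c)), which the right-hand side does not, and conversely the right-hand side contains a block of size $d\geq 3$ while every block of a partition in the left-hand side has size at most two by Lemma \ref{LemCases}(b), since $\vierpartwbwb\notin\langle\positionerd,\crosspartwwww\rangle\subset\langle\singletonw\otimes\singletonb,\crosspartwwww\rangle$. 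What you actually need -- and what the paper records -- is only the containment $\positionerd\in\langle\singletonw\otimes\singletonb,\crosspartwwww\rangle$; since $\positionerd$ never occurs in the generating sets without $\singletonw\otimes\singletonb$ alongside it, this suffices for every collapse you perform, so the error does not propagate, but the equality should not be asserted.

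Second, and more seriously for your distinctness argument, the identity $\bigl((\CC_0)_{\grp}\bigr)\cap NC^{\twocol}=\CC_0$ is false in general: for $\CC_0=\categ{\SSS}{\loc}{k,d}$ with $d\geq 3$ one has $(\CC_0)_{\grp}=\categ{\SSS}{\grp,\glob}{k}\ni\paarpartww\otimes\paarpartbb$, a noncrossing partition that is not in the locally colorized $\CC_0$. Your justification (``crossing moves that are undone at the end produce nothing new'') is precisely what fails: adjoining $\crosspartwwww$ does create new noncrossing partitions (e.g. $\vierpartwwbb$ from $\vierpartwbwb$), and this is the very mechanism behind the collapses you list. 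The identity holds exactly for those $\CC_0$ that survive the collapse, so the distinctness argument must be run the other way around: apply Lemma \ref{LemCapCateg}(b) to each of the seven listed categories $\CC$, compute $\CC\cap NC^{\twocol}$, and check via the invariants ($k$, $d$, the case $\OOO/\HHH/\SSS/\BBB$, global versus local) that these seven noncrossing categories are pairwise distinct. With these two repairs your argument matches the paper's proof, which consists of the same reduction plus the explicit list of identifications $\langle\categg{\HHH'}{\loc},\crosspartwwww\rangle=\langle\categ{\HHH}{\loc}{0,0},\crosspartwwww\rangle$, $\langle\categ{\SSS}{\loc}{k,d},\crosspartwwww\rangle=\langle\categ{\SSS}{\glob}{k},\crosspartwwww\rangle$ and the merging of all $\BBB'$-families into $\categ{\BBB}{\glob}{k}$ respectively $\categ{\BBB}{\grp,\loc}{k}$.
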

\begin{proof}
The list of categories in the group case is exactly given by all $\langle\CC_0,\crosspartwwww\rangle$, where $\CC_0\subset NC^{\twocol}$ is a category of noncrossing partitions (Lemma \ref{LemCapCateg}). Note that $\positionerd$ is in $\langle\singletonw\otimes\singletonb,\crosspartwwww\rangle$. Moreover:
\begin{align*}
&\langle\categ{\HHH'}{\loc}{k},\crosspartwwww\rangle=\langle\categ{\HHH}{\loc}{0,0},\crosspartwwww\rangle\\
&
\langle\categ{\SSS}{\loc}{k,d},\crosspartwwww\rangle=\langle\categ{\SSS}{\glob}{k},\crosspartwwww\rangle\\
\textnormal{and }
&\langle\categ{\BBB'}{\glob}{k},\crosspartwwww\rangle
=\langle\categ{\BBB}{\glob}{k},\crosspartwwww\rangle
=\langle\categ{\BBB'}{\loc}{k,d,0},\crosspartwwww\rangle
=\langle\categ{\BBB'}{\loc}{k,d,\frac{d}{2}},\crosspartwwww\rangle
\end{align*}
\end{proof}

\section{Concluding remarks}\label{SectConclRem}

\subsection{From categories of partitions to compact quantum groups}

In \cite{tarragoweberopalg}, we define unitary easy quantum groups using categories of two-colored partitions. The first step is to associate a universal $C^*$-algebra to a category of partitions by assigning certain algebraic relations to any partition. This $C^*$-algebra can then be endowed with a comultiplication turning it into a compact matrix quantum group in the sense of Woronowicz \cite{woronowicz1987compact}. Another way of obtaining this quantum group is to turn the category of partitions into a concrete monoidal $W^*$-category in the sense of Woronowicz \cite{woronowicz1988tannaka}. Using his Tannaka-Krein result \cite{woronowicz1988tannaka}, we thus obtain the quantum group via its intertwiner spaces. Quantum groups obtained this way are called unitary easy quantum groups, extending the definition of Banica and Speicher's orthogonal easy quantum groups \cite{banica2009liberation}. If a category consists only of noncrossing partitions, we call the associated quantum group a free easy quantum group. 
See \cite{tarragoweberopalg} for remarks on the use of easy quantum groups for the theory of compact matrix quantum groups, for free probability and for other links.

\subsection{Open problems in the classification of categories of partitions}

In this article, we classified all (two-colored) categories of noncrossing partitions, thus all categories $\langle\emptyset\rangle\subset\CC\subset NC^{\twocol}$. Furthermore, we classified all categories $\langle\crosspartwwww\rangle\subset\CC\subset P^{\twocol}$. Moreover, we know all categories $NC^{\twocol}\subset\CC\subset P^{\twocol}$ from \cite{banica2009liberation} -- there are none besides $NC^{\twocol}$ and $P^{\twocol}$.
However, so far we have no result on determining the categories $\langle\emptyset\rangle\subset\CC\subset\langle\crosspartwwww\rangle$, which would be a natural next step to do in order to complete the classification of all four sides of the following square:
\begin{align*}
NC^{\twocol} &&\supset && \langle\emptyset\rangle\\
\subsetdown\quad && &&\subsetdown\quad\\
P^{\twocol} &&\supset && \langle\crosspartwwww\rangle
\end{align*}
Of course, the long term goal would be also to classify the diagonal of that square, hence  all  categories $\CC\subset P^{\twocol}$.
Here, following the history of classification in the orthogonal case, the strategy could be first to consider the half-liberated case. This amounts to classifying all categories containing some half-liberated partitions $\halflibpartwwwwww$ (with various colorings) but not  the crossing partition $\crosspartwwww$. The category $\langle\halflibpartwwwwww\rangle$ is one such example.

In the orthogonal case, the full classification is achieved along the lines of a division into non-hyperoctahedral categories (not so difficult) and hyperoctahedral ones (difficult). In the unitary case, hyperoctahedral categories should be those containing $\vierpartwbwb$ but not $\singletonw\otimes\singletonb$. It is likely that the classification of non-hyperoctahedral categories is more or less immediately doable, but as for the hyperoctahedral ones, it is unclear whether the methods of \cite{raum2014combinatorics} and \cite{raum2013full} can be applied directly.

\subsection{Using more colors}

Once the step from non-colored partitions to colored partitions is done, the question is: why only two colors? From the combinatorial point of view, it is straightforward to define categories of partitions having $n$ colors and $n$ inverse colors (thus, our two-colored partitions would be the case $n=1$). Such partitions appear for instance in Freslon's work on partition quantum groups \cite{freslon2014partition}. It would be nice to see if we again obtain a much wider variety of noncrossing categories, when passing from $n=1$ to $n>1$.






\bibliographystyle{alpha}
\nocite{*}
\bibliography{bibliographie}

\end{document}